\let\voo\v
\DeclarePairedDelimiter{\norm}{\|}{\|}
\DeclarePairedDelimiter{\snorm}{|}{|}
\newcommand{\na}{\nabla}
\def\eps{\varepsilon}
\def\la{\langle}
\def\ra{\rangle}
\def\ran{\ra^n}
\def\div{\operatorname{div}}
\def\R{\mathbb{R}}
\def\Itau{\mathcal{I}_\tau}
\def\I{I}
\def\Th{\mathcal{T}_h}
\def\Vh{\mathcal{V}_h}
\def\dt{\partial_t}
\def\v{\mathbf{v}}
\def\D{\mathcal{D}}
\def\E{\mathcal{E}}
\def\softd{{\leavevmode\setbox1=\hbox{d}%
		\hbox to 1.05\wd1{d\kern-0.4ex{\char039}\hss}}}
\def\bmu{\bar{\mu}}
\def\hbmu{\hat{\bar{\mu}}}
\def\bphi{\bar{\phi}}
\def\hbphi{\hat{\bar{\phi}}}
\def\bq{\bar{q}}
\def\hbq{\hat{\bar{q}}}
\def\ddta{\;\mathrm{d}s}
\def\dx{\;\mathrm{d}x}
\newtheorem{lemma}{Lemma}
\newtheorem{problem}[lemma]{Problem}
\newtheorem{theorem}[lemma]{Theorem}
\theoremstyle{definition}
\newtheorem{assumption}[lemma]{Assumption}
\begin{document}

\title[Error analysis for a viscoelastic phase separation model]{Error analysis for a viscoelastic phase separation model}


\author*[1]{\fnm{Aaron} \sur{Brunk}}\email{abrunk@uni-mainz.de}

\author[2,3]{\fnm{Herbert} \sur{Egger}}\email{herbert.egger@jku.at}
\equalcont{These authors contributed equally to this work.}

\author[3]{\fnm{Oliver} \sur{Habrich}}\email{oliver.habrich@jku.at}
\equalcont{These authors contributed equally to this work.}

\author[1]{\fnm{Maria} \sur{Luk\'a\voo{c}ov\'a-Medvi\softd ov\'a}}\email{lukacova@uni-mainz.de}
\equalcont{These authors contributed equally to this work.}

\affil*[1]{\orgdiv{Institute of Mathematics}, \orgname{Johannes Gutenberg-University}, \orgaddress{\city{Mainz}, \country{Germany}}}

\affil[2]{\orgdiv{}, \orgname{Johann Radon Institute for Computational and Applied Mathematics}, \orgaddress{\city{Linz}, \country{Austria}}}

\affil[3]{\orgdiv{Institute for Numerical Mathematics}, \orgname{Johannes Kepler University}, \orgaddress{\city{Linz}, \country{Austria}}}


\abstract{We consider systematic numerical approximation of a viscoelastic phase separation model that describes the demixing of a polymer solvent mixture. An unconditionally stable discretisation method is proposed based on a finite element approximation in space and a variational time discretization strategy. The proposed method preserves the energy-dissipation structure of the underlying system exactly and allows to establish a fully discrete nonlinear stability estimate in natural norms based on the concept of relative energy. These estimates are used to derive order optimal error estimates for the method under minimal smoothness assumptions on the problem data, despite the presence of various strong nonlinearities in the equations. The theoretical results and main properties of the method are illustrated by numerical simulations which also demonstrate the capability to reproduce the relevant physical effects observed in experiments.
}

\keywords{phase-field models, viscoelastic phase separation, variational time discretization, finite elements, relative energy estimates, numerical analysis}


\pacs[MSC Classification]{35K52, 35K55, 65M12, 65M60, 82C26}

\maketitle

The separation of polymer-solvent mixtures after a deep quench is strongly influenced by dynamic asymmetry, i.e., by different length and time scales of two components~\cite{Schmid2023,Tanaka2017}. This leads to meta-stable states and the formation of intermediate networks which are usually not observed in the demixing of binary fluids. 
In~\cite{Tanaka}, Tanaka made a first attempt to model such \emph{viscoelastic phase separation} phenomena by introducing an internal relaxation variable. Tanaka's model provided good qualitative agreement with experimental observations. 
In order to guarantee thermodynamic consistency, the model was further modified by Zhou et al.~\cite{ZZE}. 
The resulting system of partial differential equations is of parabolic-hyperbolic type and combines the Cahn-Hilliard equation for the volume fraction, a relaxation equation for an internal variable, the Navier-Stokes equations for the fluid flow, and the Oldroyd-B model for the viscoelastic stress. 
Well-posedness of the viscoelastic phase separation model and suitable modifications were thoroughly analyzed in~\cite{Brunk.Ex3D,Brunk.Exreg}.

\subsection*{Basic model under consideration.}
The authors of \cite{ZZE} also consider a simplified model for viscoelastic phase separation, which is a particular case of the system
\begin{alignat}{5} 
\dt \phi &= \div\big(b(\phi) \nabla \mu + c(\phi)\nabla(A(\phi)q)\big), \label{eq:chq1}\\
\mu &= -\gamma \Delta \phi + f'(\phi), \label{eq:chq2} \\
\dt q &= -\kappa(\phi) \, q - A(\phi)\div\big(c(\phi)\nabla\mu - d(\phi) \nabla (A(\phi)q ) \big) + \varepsilon \Delta q.
\label{eq:chq3} 
\end{alignat}
Here $\phi$ is the volume fraction of the polymer and solvent components, $\mu$ is the chemical potential, and $q$ denotes a pressure-like quantity related to the bulk stress in the mixture; $\gamma>0$ is the interface parameter, $f(\cdot)$ is the derivative of an internal energy of the mixture, $b(\cdot)$, $c(\cdot)$, $d(\cdot)$ are phase dependent mobility parameters, $\kappa(\cdot)$ is an inverse relaxation time, $A(\cdot)$ a bulk modulus, and $\varepsilon$ a regularization parameter. The latter is required to be positive for the analysis of the system which can also be motivated from the multi-scale modelling perspective~\cite{Sueli07}.  
For $\eps=0$, $b(\phi)=M(\phi) \phi^2 (1-\phi)^2$, $c(\phi)=M(\phi) \phi (1-\phi)$, and $d(\phi) = M(\phi)$ the above system amounts to the problem studied in~\cite[Section~III]{ZZE}. 
The first simulation results for this simplified model of viscoelastic phase separation were obtained in \cite{ZZE} by explicit time-stepping methods. A good qualitative agreement was observed with the experimental data from~\cite{Tanaka}.
In \cite{Paul,Strasser.2019}, linear-implicit energy-stable approximations were developed for the above system guaranteeing energy dissipation on the discrete level.
Extensive numerical tests were performed for the simplified model \eqref{eq:chq1}--\eqref{eq:chq3} above as well as for the full model coupled to viscoelastic fluid flow. A rigorous error analysis for corresponding discretization methods seems to be missing up to date.

\subsection*{Related work}
The problem under consideration shares similarities with cross-diffusion systems; see~\cite{Jngel2016} for an introduction and references. 
Discretization schemes preserving the underlying entropy-dissipation structure have been proposed and investigated in \cite{Braukhoff2021,Juengel2021,JngelVetter+2023,Juengel2022}. Typically, finite-volume schemes are used in this context, since they accommodate a discrete version of the chain rule which is important to deal with the nonlinearities of the model. 
Various discretisation methods have been devised for the Cahn-Hilliard equation and similar gradient systems, see e.g. \cite{Elliott1992,Feng2004}. Recent developments, like the \emph{scalar auxiliary variable} (SAV) approach \cite{SHEN2018407} or the \emph{energy quadratisation} (EQ) approach \cite{GONG2019} aim at improving computational efficiency. This is achieved by relaxing the nonlinear relation between energy and chemical potential to some extent by introducing auxiliary variables. Similar ideas are also used in \cite{LI23,Akrivis19,CHEN2019} and in \cite{Yang2020,CHEN2022,Zhang2022}.
In our previous work~\cite{brunk2021ch}, we proposed a structure-preserving discretization scheme for the Cahn-Hilliard equation and we provided a full stability and error analysis based on the relative energy estimates.

\subsection*{Scope and main contributions.}
In this paper, we propose a fully discrete approximation scheme for the system \eqref{eq:chq1}--\eqref{eq:chq3} based on a finite element approximation in space and a variational time discretization strategy. The resulting method is unconditionally energy stable, preserves the underlying energy-dissipation structure of the problem, and yields order optimal error estimates under minimal smoothness assumptions on the problem data and on the continuous solution.
The main ingredient of our analysis are \emph{relative energy estimates}, which allow us to establish discrete stability estimates in appropriate norms. Together with the variational structure of the approximation scheme, this allows to bound the discretization errors by the corresponding projection and interpolation errors, leading to the optimal quantitative a-priori error bounds. The discrete stability estimates further allow us to prove uniqueness of the discrete solution under a mild restriction on the spatial and temporal mesh size. 
A key difficulty in the analysis of the problem lies in the various nonlinear terms of the model which are, however, important to obtain good qualitative agreement with experimental data~\cite{Tanaka,ZZE}.
This is resolved here by the nonlinear discrete stability analysis mentioned above. The technicalities therefore are shifted to estimation of certain residual terms, which can be analysed independently.

\subsection*{Outline.}
The remainder of the manuscript is organized as follows: 
In Section~\ref{sec:prelim}, we introduce our notation and basic assumptions. We also collect some results about the analysis of the problem~\eqref{eq:chq1}--\eqref{eq:chq3}.
In Section~\ref{sec:main}, we then present our discretization scheme and state the main results of the paper. 
The essential parts of the proofs are elaborated in Sections~\ref{sec:existence}--\ref{sec:uniqueness}, and further technical details are provided in the appendix. 
For illustration of our theoretical findings, we present some numerical tests in Section~\ref{sec:num} and then close with a brief discussion.

\section{Notation, assumptions, and preliminaries} 
\label{sec:prelim}

We consider a periodic setting and assume that $\Omega \subset \R^d$ is a hypercube in dimension $d=2,3$ which is identified with the $d$-dimensional torus. 
Functions on $\Omega$ are assumed periodically extendable to $\R^d$ under preservation of class.  
We denote by $L^p(\Omega)$, $W^{k,p}(\Omega)$ the corresponding Lebesgue and Sobolev spaces and write $\norm{\cdot}_{0,p}$, $\norm{\cdot}_{k,p}$ for the respective norms. 
If the meaning is clear from the context, we will sometimes omit the symbol $\Omega$ and write $L^p$ for $L^p(\Omega)$. Analogous notation holds for other spaces.
We further abbreviate $H^k(\Omega)=W^{k,2}(\Omega)$ and $\norm{\cdot}_k = \norm{\cdot}_{k,2}$. 
The corresponding dual spaces are denoted by $H^{-s}_p(\Omega)=H^s_p(\Omega)'$. 
Note that for $s=0$, we have $H^s(\Omega) = H^{-s}(\Omega)=L^2(\Omega)$, where we tacitly identified $L^2(\Omega)$ with its dual space. 
The norm of the dual spaces are given by 
\begin{align} \label{eq:dualnorm}
    \norm{r}_{-s} = \sup_{v \in H^s(\Omega)} \frac{\la r, v\ra}{\|v\|_{s}},
\end{align}
where $\langle \cdot, \cdot\rangle$ denotes for the duality product on $H^{-s}(\Omega) \times H^s(\Omega)$ for $s \ge 0$.  The same symbol will be used for the scalar product of $L^2(\Omega)$. 
For functions $u,v \in H^0(\Omega) = L^2(\Omega)$, we use the same symbol $\la u,v \ra = \int_\Omega u v \, dx$ to denote the scalar product on $L^2(\Omega)$.
We further denote by $L^p(a,b;X)$, $W^{k,p}(a,b;X)$,  and
$H^k(a,b;X)$ the Bochner spaces of integrable or differentiable functions on the time interval $(a,b)$ with values in some Banach space $X$. If $(a,b)=(0,T)$, we will omit reference to the time interval and briefly write $L^p(X)$, for instance.

The following assumptions on the problem data will be used throughout the manuscript. 
\begin{assumption} \phantom{e}\\[-1em]
 \begin{itemize}[leftmargin=1cm] 
     \item[(A0)] $\Omega \subset \R^d$, $d=2,3$ is a hypercube and identified with the $d$-dimensional torus. Functions on $\Omega$ are assumed periodically extendable to $\R^d$; $T>0$ given.
     \item[(A1)]  $b \in C^1(\R)$ with $0 < b_1\leq b(s) \leq b_2$ for all $s\in\R$ and $\norm{b'}_{0,\infty}\leq b_3$; $c \in C^{1}(\R)$ with $0 \le c_1\leq c(s) \leq c_2$ for all $s\in\R$ and $\norm{c'}_{0,\infty}\leq c_3$;  $d(s)=d_0>0$ and $b(s) \ge c(s)^2/d_0 + \eps$. 
     \item[(A2)] $f \in C^4(\R)$ with $f(s),f''(s)\geq -f_1$ and $|f^{(k)}(s)|\le f_2^{(k)} + f_3^{(k)} |s|^{4-k}$ with $f_1,f_{i}^{(k)}\geq 0$.
     \item[(A3)] $A\in C^2(\R)$ with $0 \leq A_1 \leq A(s) \leq A_2$ and $\norm{A^{(k)}}_{0,\infty}\leq A_{k+2}$ for $k=1,2$.
     \item[(A4)] $\kappa\in C^1(\R)$ with $0 < \kappa_{1}\leq \kappa(s) \leq \kappa_{2}$ and $\norm{\kappa'}_{0,\infty}\leq \kappa_{3}$. 
     \item[(A5)] $\gamma,\varepsilon>0$ constant.
 \end{itemize}   
\end{assumption}
The above assumptions allow to prove the existence of weak solutions to \eqref{eq:chq1}--\eqref{eq:chq3} for appropriate initial values. Corresponding results for a more complex model have been obtained in \cite{Brunk.Ex3D,Brunk.Exreg}. 

\subsection*{Variational characterization}
As a starting point for designing a suitable discretization scheme, we note that sufficiently regular periodic solutions of \eqref{eq:chq1}--\eqref{eq:chq3} satisfy 
\begin{align}
\la \dt\phi,\psi\ra + \la b(\phi)\na\mu - c(\phi) \na(A(\phi)q),\na\psi\ra &= 0, \label{eq:weak1}\\
\la \mu,\xi\ra - \gamma\la \na\phi,\na\xi\ra - \la f'(\phi),\xi\ra &= 0,\label{eq:weak2}\\
\la \dt q,\zeta\ra + \la \kappa(\phi)q,\zeta\ra + \la d_0 \na(A(\phi)q) - c(\phi)\na\mu,\na(A(\phi)\zeta)\ra + \varepsilon \la \na q,\na\zeta\ra &= 0 \label{eq:weak3}
\end{align}
for all smooth test functions $\psi,\xi,\zeta$ and all $t\in[0,T]$. 
Note that the solution components here depend on time, while the test functions do not. 
The above identities follow immediately by testing the equations appropriately and using integration-by-parts for some of the terms. 

\subsection*{Basic properties}
The variational identities \eqref{eq:weak1}--\eqref{eq:weak3} immediately imply the following properties of sufficiently smooth solutions:
By testing with $\psi=1$, $\xi=0$ and $\zeta=0$, we get 
\begin{align} \label{eq:mass}
\frac{d}{dt} \int_\Omega \phi(t) \dx = 0,
\end{align}
which encodes the conservation of mass. 
Testing with $\psi=\mu(t)$, $\xi=\dt\phi(t)$ and $\zeta=q(t)$ on the other hand, leads to the energy dissipation identity
\begin{align} \label{eq:energy}
\frac{d}{dt} \E(\phi(t),q(t)) = - \D_{\phi(t)}(\mu(t),q(t)).
\end{align}
Here $\E(\phi,q) = \int_\Omega \frac{\gamma}{2} |\nabla \phi|^2 + f(\phi) + \frac{1}{2} |q|^2$ denotes the free energy associated to the system and $D_\phi(\mu,q) = \int_\Omega \frac{1}{d_0} |c(\phi) \nabla \mu - d_0 \nabla (A(\phi)|^2 + (b(\phi) - c(\phi)^2/d_0) |\nabla \mu|^2 + \eps |\nabla q|^2 + \kappa(\phi) |q|^2 \dx$ the corresponding dissipation functional.
Both properties are important for proving the existence of weak solutions on the continuous level. 
They are a direct consequence of the variational characterization~\eqref{eq:weak1}--\eqref{eq:weak3} of solutions and can be preserved by appropriate discretization schemes.

\section{Proposed method and main results}\label{sec:main}

We start by introducing additional notation, assumptions and the approximation method for our model problem. Afterwards, we state our main results and briefly comment on the main arguments of the proofs, which are detailed in the following sections.

\subsection*{Notation and assumptions.}
For the space discretization, we assume that
\begin{itemize}\itemsep0ex  \setlength{\itemindent}{0.3em}
    \item[(A6)] $\Th$ is a geometrically conforming and quasi-uniform partition of $\Omega$ into simplices that can be extended periodically to periodic extensions of $\Omega$. 
\end{itemize}
By quasi-uniform, we mean that there exists a constant $\sigma>0$ such that $\sigma h \le \rho_K \le h_K \le h$ for all $K \in \Th$, where $\rho_K$ and $h_K$ are the inner-circle radius and diameter of the element $K \in \Th$ and $h=\max_{K \in \Th} h_T$ is the global mesh size \cite{BrennerScott}. 
We then denote by 
\begin{align*}
    \Vh &:= \{v \in H^1(\Omega) : v|_K \in P_2(K) \quad \forall K \in \Th\},
\end{align*}
the space of continuous periodic piecewise quadratic polynomials on $\Th$. 
By $\pi_h^0 : L^2(\Omega) \to \Vh$ and $\pi_h^1 : H^1(\Omega) \to \Vh$, we denote the $L^2$- and $H^1$-orthogonal projection operators, defined by 
\begin{alignat}{2}
\la \pi_h^0 u - u, v_h\ra &= 0 \qquad && \forall v_h \in \Vh, \label{eq:defl2proj}\\
\la \pi_h^1 u - u, v_h\ra + \la \nabla (\pi_h^1 u - u ), \nabla v_h \ra &= 0 \qquad && \forall v_h \in \Vh.\label{eq:defh1proj}
\end{alignat}
Some basic properties of these operators are again summarized in Appendix~\ref{app:proj}.
We will frequently make use of the discrete dual norm given by 
\begin{align}
\|r\|_{-1,h} := \sup_{v_h \in \Vh} \frac{\la r, v_h\ra}{\|v_h\|_1}
\end{align}
which is the discrete version of the dual norm.
For the approximation in time, we also use piecewise polynomial functions, defined on the grid
\begin{itemize}\itemsep0ex  \setlength{\itemindent}{0.3em}
\item[(A7)] $\Itau:=\{0=t^0,t^1,\ldots,t^N=T\}$ \quad with time steps $t^n = n \tau$ and step size $\tau = T/N$.
\end{itemize}
We note that 
non-uniform time steps could be considered with minor modifications of the arguments presented in the following. 
We write $I_n:=(t^{n-1},t^n)$ for the $n$-th time interval and use $\la a, b \ran = \int_{I^n} \la a, b \ra \ddta$ to abbreviate the integral over $I^n$. 
We further introduce the spaces
\begin{align}
P_k(\Itau;X) 
\qquad \text{and} \qquad 
P_k^c(\Itau;X) = P_k(\Itau;X) \cap C(0,T;X),
\end{align}
consisting of all discontinuous, respectively, continuous piecewise polynomial functions of degree less or equal then $k$ on the time grid $\Itau$, with values in some vector space $X$.
We write
$\I_\tau^1:H^1(0,T;X)\to P_1^c(\Itau;X)$ and 
$\bar \pi_\tau^0 : L^2(0,T;X) \to P_0(\Itau;X)$ for the piecewise linear interpolation, respectively, the piecewise constant projection of functions in time. 
Some important properties of these operators are again summarized in Appendix~\ref{app:proj}. 
Throughout the presentation, the bar symbol $\bar u$ is used to indicate functions in $P_0(\Itau;X)$ which are piecewise constant in time.
For ease of presentation, we use the same symbol 
$\bar u = \bar \pi_\tau^0 u$ also to abbreviate the piecewise constant projection in time of a function $u \in L^2(0,T;X)$. 

\subsection*{Discretization method}
As an approximation of the initial value problem for
\eqref{eq:chq1}--\eqref{eq:chq3}, we consider the following scheme, which is motivated by the variational characterization of solutions.

\begin{problem}\label{prob:full}
Let (A0)--(A7) hold and $\phi_{h,0},q_{h,0} \in \Vh$ be given. 
Find $\phi_{h,\tau},q_{h,\tau} \in P_1^c(\Itau;\Vh)$ and $\bar\mu_{h,\tau} \in P_0(\Itau;\Vh)$ such that $\phi_{h,\tau}(0)=\phi_{h,0}$ and $q_{h,\tau}(0)=q_{h,0}$, and such that 
\begin{align}
\la \dt\phi_{h,\tau}, \bar\psi_{h,\tau}\ran &=  -\la b(\bar\phi_{h,\tau})\na\bmu_{h,\tau} - c(\bar\phi_{h,\tau})\na(A(\bar\phi_{h,\tau}) \bar q_{h,\tau}),\na\bar\psi_{h,\tau} \ran, \label{eq:pg1}\\
\la \bar\mu_{h,\tau},\bar\xi_{h,\tau}  \ran & = \gamma\la \nabla\bar\phi_{h,\tau},\nabla\bar\xi_{h,\tau}\ran + \la f'(\phi_{h,\tau}),\bar\xi_{h,\tau}\ran, \label{eq:pg2}\\
\la \dt q_{h,\tau}, \bar\zeta_{h,\tau}\ran & =  - \la d_0 \na(A(\bar\phi_{h,\tau}) \bar q_{h,\tau}) - c(\bar\phi_{h,\tau})\nabla\bar\mu_{h,\tau},\nabla(A(\bar\phi_{h,\tau})\bar\zeta_{h,\tau})\ran\notag \\
& \qquad \qquad - \la \kappa(\bar\phi_{h,\tau})\bar q_{h,\tau},\bar\zeta_{h,\tau}\ran - \varepsilon \la \na \bar q_{h,\tau} \na\bar\zeta_{h,\tau}\ran, \label{eq:pg3}
\end{align}
for all test functions $\bar\psi_{h,\tau}$, $\bar \xi_{h,\tau}$, $\bar\zeta_{h,\tau} \in P_0(I_n;\Vh)$ and all time steps $1 \le n \le N$.
 Let us recall that $\la a,b\ran = \int_{t^{n-1}}^{t^n} \la a,b\ra \, ds =\int_{t^{n-1}}^{t^n} \int_\Omega a \cdot b \dx \ddta$ is used for the abbreviation of space-time integrals.
 \end{problem}

In the $n$-th time step of the method, the values $\phi_{h,\tau}(t^{n-1})$, $q_{h,\tau}(t^{n-1})$ are known, and one has to find $\phi_{h,\tau}(t^n)$, $q_{h,\tau}(t^n)$ and $\bar \mu_{h,\tau}(t^n)$. The above scheme thus amounts to a fully implicit time-stepping scheme. Solvability will be discussed below.

\subsection*{Main results.}
As a first step of our analysis, let us comment on the well-posedness of the discrete problem and its preservation of the basic properties of the underlying system. 

\begin{theorem}\label{thm:ex}
Let (A0)--(A7) hold. 
Then for any $\phi_{h,0},q_{h,0} \in \Vh$, Problem~\ref{prob:full} has at least one solution. Moreover, any such solution conserves mass and dissipates energy, i.e.,
\begin{align}
 &\la \phi_{h,\tau}(t^n),1\ra = \la \phi_{h,\tau}(t^m),1\ra 
  \text{ and } 
 \E(\phi_{h,\tau},q_{h,\tau})\big|_{t^m}^{t^n} = -\int_{t^m}^{t^n} \D_{\bar \phi_{h,\tau}}(\bar \mu_{h,\tau},\bar q_{h,\tau}) \, \ddta \label{eq:discstructure}
\end{align}   
for all $0 \le m \le n \le N$. Here the energy and dissipation functionals $\E(\phi,q)$, $\D_{\phi}(\mu,q)$ are defined after \eqref{eq:energy}. 
Furthermore, solutions can be uniformly bounded by 
\begin{align} \label{eq:full_apriori}
\|\phi_{h,\tau}\|_{L^\infty(H^1)}^2 &+ \|q_{h,\tau}\|_{L^\infty(L^2)}^2  + \|\bq_{h,\tau}\|_{L^2(H^1)}^2 + \|\bmu_{h,\tau}\|_{L^2(H^1)}^2 
\le C'. 
\end{align}
The constant $C'=C'(\|\phi_{h,0}\|_{H^1},\|q_{h,0}\|_{L^2})$ depends only on the bounds of the coefficients appearing in the assumptions and the norm of the initial data.
\end{theorem}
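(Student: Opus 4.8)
The plan is to establish the three assertions in the order that is logically most convenient: derive the mass and energy identities \eqref{eq:discstructure} first, read off the a priori bounds \eqref{eq:full_apriori}, and only then invoke these bounds in a fixed-point argument for existence. Since $\Vh$ is finite dimensional and the scheme is fully implicit, Problem~\ref{prob:full} reduces to a sequence of $N$ nonlinear systems solved consecutively: given $\phi_{h,\tau}(t^{n-1}),q_{h,\tau}(t^{n-1})$ one seeks $\phi_{h,\tau}(t^n),q_{h,\tau}(t^n)$ and the constant $\bmu_{h,\tau}|_{I_n}\in\Vh$. On $I_n$ the piecewise constant projections become midpoint values, $\bphi_{h,\tau}=\tfrac12(\phi_{h,\tau}(t^{n-1})+\phi_{h,\tau}(t^n))$ and likewise $\bq_{h,\tau}$, while $\dt\phi_{h,\tau}$ and $\bmu_{h,\tau}$ are constant in time; the only genuinely nonlinear-in-time term is $f'(\phi_{h,\tau})$, whose interval average is the discrete-gradient type quantity $\int_0^1 f'\big((1-\theta)\phi_{h,\tau}(t^{n-1})+\theta\phi_{h,\tau}(t^n)\big)\,d\theta$.

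Mass conservation follows by testing \eqref{eq:pg1} with the constant $\bar\psi_{h,\tau}=1$: all gradient terms drop and $\la\dt\phi_{h,\tau},1\ran=\la\phi_{h,\tau}(t^n)-\phi_{h,\tau}(t^{n-1}),1\ra=0$. The energy identity is the heart of the matter and rests on the \emph{exactness} of the variational time discretisation. The plan is to test \eqref{eq:pg1} with $\bar\psi_{h,\tau}=\bmu_{h,\tau}$, \eqref{eq:pg2} with $\bar\xi_{h,\tau}=\dt\phi_{h,\tau}$ (admissible because $\dt\phi_{h,\tau}$ is piecewise constant in time, hence lies in $P_0(I_n;\Vh)$), and \eqref{eq:pg3} with $\bar\zeta_{h,\tau}=\bq_{h,\tau}$. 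The midpoint structure produces the exact chain rule for the quadratic contributions,
\begin{align*}
\gamma\la\nabla\bphi_{h,\tau},\nabla\dt\phi_{h,\tau}\ran &= \tfrac{\gamma}{2}\|\nabla\phi_{h,\tau}\|_0^2\big|_{t^{n-1}}^{t^n}, &
\la\dt q_{h,\tau},\bq_{h,\tau}\ran &= \tfrac12\|q_{h,\tau}\|_0^2\big|_{t^{n-1}}^{t^n},
\end{align*}
while the fundamental theorem of calculus gives, pointwise in $x$, the crucial exact relation $\la f'(\phi_{h,\tau}),\dt\phi_{h,\tau}\ran=\int_\Omega f(\phi_{h,\tau})\,\dx\big|_{t^{n-1}}^{t^n}$, which is precisely what the test function $\dt\phi_{h,\tau}$ yields. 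Hence \eqref{eq:pg2} tested with $\dt\phi_{h,\tau}$ gives the $\phi$-part of $\E|_{t^{n-1}}^{t^n}$, which equals $\la\dt\phi_{h,\tau},\bmu_{h,\tau}\ran$ and is replaced through \eqref{eq:pg1}; together with the $q$-increment from \eqref{eq:pg3}, and using the algebraic identity $\tfrac{1}{d_0}|c\nabla\mu-d_0\nabla(Aq)|^2+(b-c^2/d_0)|\nabla\mu|^2 = b|\nabla\mu|^2-2c\,\nabla\mu\cdot\nabla(Aq)+d_0|\nabla(Aq)|^2$, the right-hand side reorganises into exactly $-\D_{\bphi_{h,\tau}}(\bmu_{h,\tau},\bq_{h,\tau})$. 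Summation over the time steps then yields \eqref{eq:discstructure}.

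From this energy balance the bounds \eqref{eq:full_apriori} follow directly. Using $f\ge -f_1$ from (A2), the free energy controls $\|\phi_{h,\tau}\|_{L^\infty(H^1)}$ (after combining the gradient bound with mass conservation and the Poincaré inequality to recover the mean) and $\|q_{h,\tau}\|_{L^\infty(L^2)}$. Coercivity of the dissipation --- the quadratic form is positive definite since $b\ge c^2/d_0+\eps$ by (A1), together with $\kappa\ge\kappa_1>0$ and $\eps>0$ --- yields $\|\nabla\bmu_{h,\tau}\|_{L^2(L^2)}$ and the full $\|\bq_{h,\tau}\|_{L^2(H^1)}$. To close the $L^2(H^1)$-bound on $\bmu_{h,\tau}$ I would control its spatial mean by testing \eqref{eq:pg2} with $\bar\xi_{h,\tau}=1$, giving $\la\bmu_{h,\tau},1\ran=\la f'(\phi_{h,\tau}),1\ran$, which the cubic growth of $f'$ and the embedding $H^1\hookrightarrow L^4$ (valid for $d\le 3$) bound by $\|\phi_{h,\tau}\|_{H^1}$; Poincaré then completes the estimate.

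Finally, for existence, one eliminates $\bmu_{h,\tau}$ via \eqref{eq:pg2} (it is uniquely and continuously determined by $\phi_{h,\tau}(t^n)$) and applies a Brouwer-type fixed-point argument to the residual map for $(\phi_{h,\tau}(t^n),q_{h,\tau}(t^n))\in\Vh\times\Vh$; continuity of the map is inherited from the continuity of $b,c,A,\kappa,f'$, and the single-step form of the energy estimate above provides the a priori ball on which the required sign condition holds. Since only existence, not uniqueness, is asserted here, such a topological argument is the appropriate tool. The step I expect to be the main obstacle is precisely this one: the coupling between \eqref{eq:pg1} and \eqref{eq:pg3} is indefinite rather than monotone, so one cannot argue by monotonicity and must instead exploit the energy structure to obtain the a priori bound, carefully handling the quartic growth of $f$ when bounding the potential-energy contribution to $\E(\phi_{h,0},q_{h,0})$ through $H^1\hookrightarrow L^4$. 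By contrast, the exact energy balance, though conceptually central, is computationally routine once the admissibility of $\dt\phi_{h,\tau}$ as a test function is recognised.
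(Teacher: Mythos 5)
Your treatment of mass conservation, of the exact energy--dissipation identity, and of the a priori bounds coincides with the paper's proof: the same test functions ($\bar\psi_{h,\tau}=1$ for mass; then $\bmu_{h,\tau}$, $\dt\phi_{h,\tau}$, $\bq_{h,\tau}$ for energy), the same observation that $\dt\phi_{h,\tau},\dt q_{h,\tau}\in P_0(I_n;\Vh)$ makes the chain rule exact for the quadratic terms and the fundamental theorem of calculus exact for $\int_\Omega f(\phi_{h,\tau})$, the same algebraic regrouping of the right-hand side into $-\D_{\bar\phi_{h,\tau}}(\bmu_{h,\tau},\bq_{h,\tau})$ using $b\ge c^2/d_0+\eps$, and the same recovery of the spatial mean of $\bmu_{h,\tau}$ by testing \eqref{eq:pg2} with $\bar\xi_{h,\tau}=1$, the growth bound on $f'$, and Poincar\'e. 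All of this is sound.

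The gap is in the existence step. Having eliminated $\bmu_{h,\tau}$ via \eqref{eq:pg2}, you apply the Brouwer corollary to the residual map $G$ in the unknowns $(\phi_{h,\tau}(t^n),q_{h,\tau}(t^n))$ and assert that the single-step energy estimate yields the required sign condition $\la G(x),x\ra\ge 0$ on a large sphere. But the energy identity arises from pairing the three equations with $(\bmu_{h,\tau},\dt\phi_{h,\tau},\bq_{h,\tau})$, not with $(\phi^n,q^n)$: the canonical pairing of $G$ with its own argument instead produces terms such as $\la b(\bar\phi)\nabla\bmu(\phi^n)-c(\bar\phi)\nabla(A(\bar\phi)\bq),\nabla\phi^n\ra$, which are indefinite and are controlled by neither $\E$ nor $\D$, so the sign condition does not follow from what you have proved. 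The paper sidesteps this precisely by \emph{not} eliminating $\bmu$: it keeps the three-field system, takes $x=(\bmu^{n-1/2},\dt\phi^{n-1/2},\bq^{n-1/2})$ as the unknown vector, and orders the equation blocks so that $\la F(x),x\ra=\E(\phi^{n},q^{n})-\E(\phi^{n-1},q^{n-1})+\tau\,\D_{\bar\phi^{n-1/2}}(\bmu^{n-1/2},\bq^{n-1/2})$ holds exactly, whence coercivity and the Zeidler corollary to Brouwer's theorem give existence. To repair your version you must either revert to this three-field formulation or replace the Euclidean pairing by one (e.g. a mobility-weighted $H^{-1}$-type duality for the $\phi$-equation) that reproduces the energy balance after elimination; as written, the topological argument does not close.
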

The two identities \eqref{eq:discstructure} follow immediately from the variational characterization of discrete solutions and insertion of appropriate test functions; compare with the continuous level. The energy-dissipation identity provides a-priori bounds on the solution, which allows to establish existence by a fixed-point argument. The complete proof will be presented in Section~\ref{sec:existence}.
We continue by stating the main result on the error estimates.
\begin{theorem}\label{thm:full_error_est}
Let (A0)--(A7) hold and let $(\phi,\mu,q)$ be a smooth solution of \eqref{eq:weak1}-\eqref{eq:weak3} satisfying 
\begin{align}
    \phi&\in H^{2}(0,T;H^1(\Omega))\cap H^1(0,T;H^3(\Omega)), \label{eq:reg1}\\ 
    \mu& \in H^2(0,T;H^1(\Omega))\cap  L^\infty(0,T;W^{1,3}(\Omega))\cap L^2(0,T;H^3(\Omega)), \label{eq:reg2}\\
    q&\in H^2(0,T;H^1(\Omega))\cap  L^\infty(0,T;W^{1,3}(\Omega))\cap L^2(0,T;H^3(\Omega)). \label{eq:reg3} 
\end{align}
Furthermore, let $(\phi_{h,\tau},\bar \mu_{h,\tau},q_{h,\tau})$ be a solution of Problem~\ref{prob:full} for some $h,\tau>0$ and with initial values given by $\phi_{h,0}=\pi_h^1 \phi(0)$ and $q_{h,0}=\pi_h^0 q(0)$. Then 
\begin{align*}
\norm{\phi_{h,\tau}-\phi}_{L^\infty(H^1)}^2 +  \norm{q_{h,\tau}-q}_{L^\infty(L^2)}^2 &+ \norm{\bmu_{h,\tau}-\bar\mu}_{L^2(H^1)}^2 \\
&+ \norm{\bq_{h,\tau}-\bar q}_{L^2(H^1)}^2    \leq C''(h^4 + \tau^4).
\end{align*}
The constant $C''$ is independent of $h$ and $\tau$. 
Moreover, for the choice $h=c'\tau$ with $c'>0$ independent of $h$ and $\tau$, the discrete solution is unique. 
\end{theorem}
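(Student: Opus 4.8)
The plan is to bound the genuinely discrete error through relative-energy stability, as announced in the introduction. I would first split each component via its natural projection, writing $\phi_{h,\tau}-\phi = e_\phi + (\hat\phi-\phi)$ with $\hat\phi = \I_\tau^1\pi_h^1\phi$ and $e_\phi = \phi_{h,\tau}-\hat\phi$, and analogously $\hat q = \I_\tau^1\pi_h^0 q$, $\hbmu = \bar\pi_\tau^0\pi_h^1\mu$, with errors $e_q=q_{h,\tau}-\hat q$ and $\bar e_\mu=\bmu_{h,\tau}-\hbmu$. The approximation parts $\hat\phi-\phi$, $\hat q-q$, $\hbmu-\bar\mu$ are controlled directly by the projection and interpolation estimates of Appendix~\ref{app:proj}, which under the regularity \eqref{eq:reg1}--\eqref{eq:reg3} deliver orders $h^2$ and $\tau^2$ in the relevant norms. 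The prescribed initial data ensure $e_\phi(0)=e_q(0)=0$, giving a clean start for the time-stepping argument. It then remains to bound $(e_\phi,\bar e_\mu,e_q)$.

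Next I would form the defect equations: inserting $(\hat\phi,\hbmu,\hat q)$ into \eqref{eq:pg1}--\eqref{eq:pg3} and comparing with the continuous identities \eqref{eq:weak1}--\eqref{eq:weak3} produces residual functionals whose discrete dual norms are again of order $h^2+\tau^2$. Subtracting from Problem~\ref{prob:full} yields variational equations for $(e_\phi,\bar e_\mu,e_q)$, which I would test with the discrete counterparts of the test functions used for \eqref{eq:energy}, namely $\bar e_\mu$ in the first equation, the piecewise-constant time derivative $\dt e_\phi$ in the second, and $\bar e_q$ in the third, and then sum over a time step. Mirroring the cancellation behind the continuous energy-dissipation identity, the cross-diffusion coupling terms combine, leaving on the left the discrete increment of a relative energy $\E(\phi_{h,\tau},q_{h,\tau}\mid\hat\phi,\hat q)$ plus a nonnegative relative dissipation, and on the right the residual pairings together with nonlinear remainders.

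The technical core is the estimation of these nonlinear remainders. Coefficient differences such as $b(\bphi_{h,\tau})-b(\hbphi)$, $c(\bphi_{h,\tau})-c(\hbphi)$, $A(\bphi_{h,\tau})-A(\hbphi)$ and $\kappa(\bphi_{h,\tau})-\kappa(\hbphi)$ would be bounded pointwise by $|e_\phi|$ up to an approximation term, using the Lipschitz bounds in (A1), (A3), (A4), while products like $A(\phi)q$ would be split by adding and subtracting mixed terms. Here the regularity $\mu,q\in L^\infty(0,T;W^{1,3})$, the embedding $H^1\hookrightarrow L^6$ for $d\le 3$, and the uniform bounds \eqref{eq:full_apriori} supply the integrability needed to pass each remainder, by H\"older and Young, into a small multiple of the relative dissipation plus constant multiples of the relative energy and of the squared residuals; the semiconvex Bregman term generated by $f$ is bounded below via (A2) by $-\tfrac{f_1}{2}\|e_\phi\|_0^2$ and absorbed by the gradient part of the relative energy. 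A discrete Gronwall argument then closes the estimate, and since the residuals enter squared it yields the asserted $O(h^4+\tau^4)$ bound on $(e_\phi,\bar e_\mu,e_q)$; the triangle inequality with the approximation estimates finishes the error theorem. I expect the main obstacle to be exactly these flux remainders: because $\nabla\mu$ and $\nabla q$ are controlled only in $L^3$, each product of the type $\la(c(\bphi_{h,\tau})-c(\hbphi))\na\bmu_{h,\tau},\na(\cdots)\ran$ must be split so that the low-regularity factor is always paired with an $L^6$ or $L^\infty$ factor, and the book-keeping required to keep every such term absorbable is delicate.

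For uniqueness under $h=c'\tau$, I would apply the same stability machinery to the difference of two discrete solutions, for which all residuals vanish. No continuous solution is then available to furnish $L^\infty$ control of the nonlinear coefficients, so I would instead invoke the inverse estimate $\|v_h\|_{0,\infty}\le C h^{-d/2}\|v_h\|_0$ on $\Vh$ to bound the coefficient differences and coupling terms by $h^{-d/2}$ times squared error norms. This factor is offset by the single step $\tau$ accumulated per increment: under $h=c'\tau$ the product $\tau\,h^{-d/2}=h^{1-d/2}/c'$ stays bounded for $d\le 3$, so the nonlinear contributions are absorbed into the relative dissipation and the discrete Gronwall argument forces the difference to vanish.
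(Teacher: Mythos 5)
Your treatment of the error estimate itself follows essentially the same route as the paper: split each variable into a projection part $\hat\phi_{h,\tau}=\I_\tau^1\pi_h^1\phi$, $\hat q_{h,\tau}=\I_\tau^1\pi_h^0 q$, $\hbmu_{h,\tau}$ (the paper takes $\bar\pi_\tau^0\pi_h^0\mu$ rather than your $\pi_h^1\mu$, an immaterial choice) and a discrete evolution part; identify the residuals obtained by inserting these projections into \eqref{eq:pg1}--\eqref{eq:pg3}; prove a relative-energy stability estimate by mimicking the test functions of the energy-dissipation identity; bound the residuals by $C(h^4+\tau^4)+C\int\E_\alpha$ using the $L^\infty(W^{1,3})$ regularity and the a-priori bounds; and close with the discrete Gronwall lemma. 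One small remark: the Bregman remainder of $f$ is not absorbed into the gradient part of the relative energy in the paper; instead the functional is augmented by $\frac{\alpha}{2}\|\phi-\hat\phi\|_0^2$ with $\alpha=\max\{-f_1,0\}+1$ to make it convex, which avoids any Poincar\'e-type argument. This part of your proposal is sound in outline.

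The uniqueness argument, however, contains a genuine gap. First, for two discrete solutions the residuals do \emph{not} all vanish: only $\bar r_{2,h,\tau}=0$, while $\bar r_{1,h,\tau}$ and $\bar r_{3,h,\tau}$ consist precisely of the coefficient differences $b(\bar\phi_{h,\tau})-b(\hbphi_{h,\tau})$, $c(\cdot)$, $A(\cdot)$, $\kappa(\cdot)$ applied to $\nabla\hbmu_{h,\tau}$ and $\hbq_{h,\tau}$; controlling them requires uniform bounds on $\|\hbmu_{h,\tau}\|_{L^\infty(W^{1,3})}$, $\|\hbq_{h,\tau}\|_{L^\infty(W^{1,3})}$, $\|\hbq_{h,\tau}\|_{L^\infty(L^\infty)}$, i.e.\ gradient control in $L^3$, which a bare inverse estimate $\|v_h\|_{0,\infty}\le Ch^{-d/2}\|v_h\|_0$ applied to the a-priori bounds of Theorem~\ref{thm:ex} cannot deliver: it only yields bounds that blow up like $h^{-d/2}$, and since the residuals enter the stability estimate \emph{squared}, the resulting Gronwall coefficient is $O(h^{-d})$ per unit time, so the absorption condition $\tau\,h^{-d}\lesssim 1$ fails under $h=c'\tau$ for $d\ge 2$. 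Even your own accounting is off: $\tau h^{-d/2}=h^{1-d/2}/c'$ is unbounded for $d=3$. Second, and decisively, you explicitly renounce the continuous solution, but that is exactly the ingredient the paper needs (and the reason the theorem's uniqueness claim is stated only under the regularity assumptions): the paper writes $\|\bar g_{h,\tau}\|_{L^\infty(W^{1,3})}\le\|\bar g_{h,\tau}-\pi_h^0\bar g\|_{L^\infty(W^{1,3})}+\|\pi_h^0\bar g-\bar g\|_{L^\infty(W^{1,3})}+\|\bar g\|_{L^\infty(W^{1,3})}$, bounds the last two terms by regularity and projection estimates, and controls the first by inverse inequalities in space \emph{and} time combined with the already-proven superconvergent discrete error estimate of Lemma~\ref{lem:disc_err_pg}, giving a factor $h^{-1/2}\tau^{-1/2}$ against an $O(h^2+\tau^2)$ error, which is where the coupling $h=c'\tau$ actually enters. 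Without this detour through the exact solution your scheme for absorbing the nonlinear terms does not close.
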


We emphasize that in contrast to the existence of discrete solutions established in Theorem~\ref{thm:ex}, we need to assume the existence of a sufficiently regular exact solution to obtain the uniqueness result.
The detailed proof is given in Sections~\ref{sec:stability}--\ref{sec:uniqueness}. For a better orientation, we point out the main steps already here: Following standard practice, we decompose the error into a projection error and a discrete evolution error. The first can be treated by standard arguments which also reveal that the regularity assumptions of the theorem are rather sharp.  
In Section~\ref{sec:stability}, we establish a nonlinear stability estimate for the discrete problem which allows us to bound the discrete evolution error by certain residuals which arise when replacing the discrete solution in Problem~\ref{prob:full} by projections of the continuous solution.
The residuals are identified and corresponding bounds are stated in Section~\ref{sec:error}, which allows to conclude the global error estimates. 
Uniqueness of the discrete solution is proven in Section~\ref{sec:uniqueness} using similar arguments. 

\section{Proof of Theorem~\ref{thm:ex} } \label{sec:existence}

\subsection*{Basic properties of discrete solutions.}
We start with establishing the two important identities \eqref{eq:discstructure} stated in the theorem. 
It suffices to consider a single time step, e.g.,  the case $m=n-1$, $1 \le n \le N$.
Let $\phi_{h,\tau}$, $q_{h,\tau} \in P_1(I^n;\Vh)$ and $\bar \mu_{h,\tau} \in P_0(I^n;\Vh)$ solve \eqref{eq:pg1}--\eqref{eq:pg3}. 
By choosing $\bar \psi_{h,\tau}=1$, $\bar \psi_{h,\tau}=0$, and $\bar \zeta_{h,\tau} = 0$ as test functions in the discrete variational identities, we obtain 
\begin{align*}
\la \phi_{h,\tau},1\ra\big|_{t^{n-1}}^{t^n} 
&= \la \dt \phi_{h,\tau},1\ran 
= -\la b(\bar \phi_{h,\tau}) \nabla \bar \mu_{h,\tau} - c(\bar \phi_{h,\tau}) \nabla (A(\bar \phi_{h,\tau}) \bar q_{h,\tau}), \nabla 1 \ran = 0.
\end{align*}
In the first step, we used the fundamental theorem of calculus and the notation $\la a,b\ran = \int_{t^{n-1}}^{t^n} \la a,b\ra \ddta$. 
This already yields conservation of mass for a single time interval. The general case follows by induction over $n$ and using the continuity of $\phi_{h,\tau}$ in time.
In a similar manner, we obtain 
\begin{align*}
\E(\phi_{h,\tau},q_{h,\tau}) \big|_{t^{n-1}}^{t^n} 
&= \gamma \la \nabla \bar \phi_{h,\tau}, \nabla \dt \phi_{h,\tau}\ran + \la f'(\phi_{h,\tau}), \dt \phi_{h,\tau}\ran + \la \dt q_{h,\tau}, \bar q_{h,\tau}\ran. 
\end{align*}
For the first and last term, we used that $\dt\nabla\phi_{h,\tau}, \dt q_{h,\tau} \in P_0(I^n;\Vh)$ are constant in time on the interval $I^n$ and hence $\la \dt q_{h,\tau},q_{h,\tau}\ran = \la \dt q_{h,\tau}, \bar q_{h,\tau}\ran$ and similar for the first term, where $\bar q_{h,\tau} = \bar \pi_\tau^0 q_{h,\tau}$ is the piecewise constant projection in time. 
Using \eqref{eq:pg2} with $\bar \xi_{h,\tau}=\dt \phi_{h,\tau}$ and \eqref{eq:pg3} with $\bar \zeta_{h,\tau} = \bar q_{h,\tau}$, we obtain 
\begin{align*}
\E(\phi_{h,\tau},q_{h,\tau}) \big|_{t^{n-1}}^{t^n} 
&= \la \bar \mu_{h,\tau}, \dt \phi_{h,\tau}\ran - \la \kappa(\bar \phi_{h,\tau}) \bar q_{h,\tau}, \bar q_{h,\tau} \ran - \eps \la \nabla \bar q_{h,\tau}, \nabla \bar q_{h,\tau}\ran \\
 &
 - \la d_0 \nabla (A(\bar \phi_{h,\tau}) \bar q_{h,\tau}) - c(\bar \phi_{h,\tau}) \nabla \bar \mu_{h,\tau}, \nabla(A(\bar \phi_{h,\tau}) \bar q_{h,\tau}) \ran.
\end{align*}
Using~\eqref{eq:pg1} with $\bar \psi_{h,\tau} = \bar \mu_{h,\tau}$, the first term on the right-hand side can be replaced by $-\la b(\bar\phi_{h,\tau})\na\bmu_{h,\tau} - c(\bar\phi_{h,\tau})\na(A(\bar\phi_{h,\tau}) \bar q_{h,\tau}),\na\bar\mu_{h,\tau} \ran$. In summary, we thus obtain 
\begin{align*}
\E(&\phi_{h,\tau},q_{h,\tau})  \big|_{t^{n-1}}^{t^n} 
= -\la b(\bar \phi_{h,\tau}) \nabla \bar \mu_{h,\tau}, \nabla \bar \mu_{h,\tau} \ran + 2 \la c(\bar \phi_{h,\tau}) \nabla (A(\bar\phi_{h,\tau}) \bar q_{h,\tau}), \nabla \bar \mu_{h,\tau}\ran \\
& - \la d_0 \nabla (A(\bar\phi_{h,\tau}) \bar q_{h,\tau}), \nabla (A(\bar\phi_{h,\tau}) \bar q_{h,\tau})\ran
- \la \kappa(\bar \phi_{h,\tau}) \bar q_{h,\tau}, \bar q_{h,\tau} \ran - \eps \la \nabla \bar q_{h,\tau}, \nabla \bar q_{h,\tau}\ran.
\end{align*}
A careful inspection of the individual terms reveals that the right-hand side of this identity exactly amounts to the dissipation term $\int_{t^{n-1}}^{t^n} \D_{\bar \phi_{h,\tau}}(\bar \mu_{h,\tau}, \bar q_{h,\tau}) \ddta$. This yields the discrete energy-dissipation identity for a single time step. The general case then follows by induction.  

\subsection*{A-priori bounds}
Using assumptions (A0)--(A5), one can immediately see that 
\begin{alignat*}{2}
\|\nabla \phi\|_{L^2}^2 + \|q\|_{L^2}^2 \le C_1 \E(\phi,q) + C_2 f_1  
\qquad \text{for all } \phi,q \in H^1(\Omega),
\end{alignat*}
where $f_1$ is the lower bound for $f$ from (A2).
Furthermore, $\|\phi\|_{H^1}^2 \le C_3 \|\nabla \phi\|_{L^2}^2 + C_4 |\int_\Omega \phi \dx|^2$ by the Poincar\'e inequality. From the discrete mass conservation and energy-dissipation property, and using the positivity of the dissipation functional, we thus already obtain 
\begin{align*}
\|\phi\|_{L^\infty(H^1)}^2 + \|q\|_{L^\infty(L^2)}^2 \le C_1',
\end{align*}
with $C_1'$ depending only on the bounds of the coefficients in (A1)--(A7) and the energy and mass of the initial data. 
From the energy-dissipation identity and the bounds of the coefficients, we further get 
\begin{align*}
\|\nabla \bar \mu_{h,\tau}\|_{L^2(L^2)}^2 + \|\bar q_{h,\tau}\|_{L^2(H^1)}^2 
+ \|c(\bar\phi_{h,\tau})\na\bmu_{h,\tau}- d_0 \na(A(\bar\phi_{h,\tau})\bq_{h,\tau})\|_{L^2(L^2)}^2  \le C_2'.
\end{align*}
Here $C_2'$ again only depends on the bounds of the coefficients and the initial data.
By testing the identity~\eqref{eq:pg2} with $\bar \xi_{h,\tau}=1$, we further see that
\begin{align*}
\la \bar \mu_{h,\tau},1\ran 
&= \la f'(\phi_{h,\tau}), 1 \ran 
\le C' \tau \left(f_2^{1} + f_3^{1} \|\phi_{h,\tau}\|_{L^\infty(L^3)}^3\right).
\end{align*}
Summation over $n$, the continuous embedding of $H^1(\Omega)$ in $L^3(\Omega)$, the uniform bounds for $\|\phi_{h,\tau}\|_{L^\infty(H^1)}$ and $\|\nabla \bar \mu_{h,\tau}\|_{L^2(L^2))}$, and the Poincar\'e inequality then lead to
\begin{align*}
\|\bar \mu_{h,\tau}\|_{L^2(L^2)}^2 
&\le C_1'' \|\nabla \bar \mu_{h,\tau}\|^2_{L^2(L^2)} +  C_2'' \sum\nolimits_n \la \bar \mu_{h,\tau},1\ran 
\le C_3'
\end{align*}
with $C_3'$ again only depending on the bounds of the coefficients and the initial data. This completes the proof of a-priori bounds stated in the theorem.

\subsection*{Existence of discrete solutions}

For ease of notation, we omit the subscripts $h,\tau$ in the following. We consider the $n$-th time step and assume that $\phi^{n-1}:=\phi(t^{n-1})$ and $q^{n-1}:=q(t^{n-1})$ are already known. 
After choosing a basis for $\Vh^n$, we may rewrite \eqref{eq:pg1}--\eqref{eq:pg3} as a nonlinear system of equations $F(x)=0$ in $\R^{3N}$, and such that $\la F(x),x\ra$ amounts to testing the corresponding variational identities with $\bar \mu^{n-1/2}$, $\dt \phi^{n-1/2}$, and $q^{n-1/2}$; compare with the procedure used in the derivation of the energy-dissipation identity. 
As a consequence of the latter, we thus obtain 
\begin{align*}
\la F(x),x\ra = \E(\phi^{n}, q^n) - \E(\phi^{n-1},q^{n-1}) + \tau \D_{\bar \phi^{n-1/2}}(\bar \mu^{n-1/2},\bar q^{n-1/2}).
\end{align*}
For ease of notation, we have introduced $\phi^{n-1+\theta} := \phi^{n-1} + \theta \tau \dt \phi^{n-1/2}$ and $q^n := 2 \bar q^{n-1/2} - q^{n-1}$.
From the arguments used to derive a-priori bounds, we get that $\la F(x),x\ra \to \infty$ for $|x| \to \infty$. 
The existence of a solution then follows from \cite[Proposition~2.8]{Zeidler1}, which is a corollary to Brouwer's fixed-point theorem.

\section{A discrete stability estimate}\label{sec:stability}

In this section, we prove a nonlinear stability estimate for the discrete problem, which is one of the key ingredients for the proof of Theorem~\ref{thm:full_error_est}. 
Let $(\phi_{h,\tau},q_{h,\tau},\bar \mu_{h,\tau})$ be a solution of Problem~\ref{prob:full}, and further let 
$\hat\phi_{h,\tau}$, $\hat q_{h,\tau} \in P^c_1(\Itau;\Vh)$, and $\hbmu_{h,\tau} \in P_0(\Itau;\Vh)$ be some given functions in the corresponding spaces. 
By inserting these functions into \eqref{eq:pg1}--\eqref{eq:pg3}, we obtain 
\begin{align}
\la \dt\hat\phi_{h,\tau}, \bar\psi_{h,\tau}\ran   
&+ \la b(\bar\phi_{h,\tau})\na\hbmu_{h,\tau} - c(\bar\phi_{h,\tau}) \na(A(\bar\phi_{h,\tau})\hbq_{h,\tau}), \na\bar\psi_{h,\tau} \ran 
=\la \bar r_{1,h,\tau},\bar\psi_{h,\tau}\ran, \label{eq:pgp1}\\
\la \hbmu_{h,\tau},\bar\xi_{h,\tau} \ran &-\gamma\la \nabla\bar \phi_{h,\tau},\nabla\bar\xi_{h,\tau}\ran - \la f'(\phi_{h,\tau}),\bar\xi_{h,\tau}\ran = \la \bar r_{2,h,\tau},\bar\xi_{h,\tau}\ran, \label{eq:pgp2}\\
\la \dt\hat q_{h,\tau}, \bar\zeta_{h,\tau}\ran &+ \la d_0 \na(A(\bar\phi_{h,\tau})\hbq_{h,\tau}) -c(\bar\phi_{h,\tau})\nabla\hbmu_{h,\tau},\nabla(A(\bar\phi_{h,\tau})\bar\zeta_{h,\tau})\ran\notag \\
&+ \la \kappa(\bar\phi_{h,\tau})\hbq_{h,\tau},\bar\zeta_{h,\tau}\ran + \varepsilon \la \na\hbq_{h,\tau}, \na\bar\zeta_{h,\tau}\ran  
= \la \bar r_{3,h,\tau},\bar\zeta_{h,\tau}\ran \label{eq:pgp3}
\end{align}
for all test function $\bar\psi_{h,\tau}$, $\bar \xi_{h,\tau}$, $\bar\zeta_{h,\tau} \in P_0(I_n;\Vh)$, time steps $1 \le n \le N$, and with appropriate residuals $\bar r_{i,h,\tau}\in P_0(\Itau;\Vh)$, $i=1,2,3$, which are actually defined through these equations.

\subsection*{Relative energy}
The goal of this section is to estimate the distance between solutions of the discrete problem \eqref{eq:pg1}--\eqref{eq:pg3} and the perturbed problem \eqref{eq:pgp1}--\eqref{eq:pgp3} in terms of the residuals $\bar r_{i,h,\tau}$. 
To measure the distance, we use a relative energy functional defined by 
\begin{align*}
\E_\alpha(\phi,q|\hat\phi,\hat q) :=& \frac{\gamma}{2}\norm{\na\phi-\na\hat\phi}_0^2   + \la f(\phi) -f(\hat\phi) - f'(\hat\phi)(\phi-\hat\phi), 1 \ra \\
 + &\frac{\alpha}{2}\norm{\phi-\hat\phi}_0^2 + \frac{1}{2}\norm{q-\hat q}_0^2
\end{align*}
with parameter $\alpha = \max\{-f_1, 0\}+1$. This choice guarantees that the functional becomes convex. 
We further observe that the relative energy functional splits naturally into contributions $\E_{\alpha}^\phi(\phi_{h,\tau}|\hat\phi_{h,\tau}):=\E_\alpha(\phi_{h,\tau},0|\hat\phi_{h,\tau},0)$ and 
$ \E_{\alpha}^q(q_{h,\tau}|\hat q_{h,\tau}):=\E_\alpha(0,q_{h,\tau}|0,\hat q_{h,\tau})$ for the individual variables.
The following important estimates now immediately follow from the Taylor expansions. 
\begin{lemma}
Let (A0)--(A5) hold. Then 
\begin{align}
&\norm{\phi_{h,\tau} - \hat\phi_{h,\tau}}_1^2\leq C \, \E^\phi_{\alpha}(\phi_{h,\tau}|\hat\phi_{h,\tau}), 
\qquad \label{eq:lower_bound_rel}
\norm{q_{h,\tau} - \hat q_{h,\tau}}_0^2\leq C\, \E_{\alpha}^q(q_{h,\tau}|\hat q_{h,\tau}), 
 \text{ and} \\
&\norm{\na(\bmu_{h,\tau}-\hbmu_{h,\tau})}_0^2 + \norm{\na(\bq_{h,\tau}-\hbq_{h,\tau})}_0^2  \leq C \,  \D_{\bar\phi_{h,\tau}}(\bmu_{h,\tau} - \hbmu_{h,\tau}, \bar q_{h,\tau} - \hbq_{h,\tau}). \label{eq:lower_bound_rel_dis_full}
\end{align}
The constant $C$ depends only on the bounds of the parameters.
\end{lemma}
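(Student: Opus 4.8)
The three inequalities are all coercivity (lower) bounds, and each follows from a Taylor expansion combined with the structural assumptions (A1)--(A2), so the plan is to treat them one at a time. The bound for $q$ is immediate: setting $\phi=\hat\phi=0$ in the definition of $\E_\alpha$ annihilates the gradient, Bregman, and $\alpha$ contributions (note $f(0)-f(0)-f'(0)\cdot 0=0$), leaving exactly $\E^q_\alpha(q_{h,\tau}|\hat q_{h,\tau})=\tfrac12\norm{q_{h,\tau}-\hat q_{h,\tau}}_0^2$, so the claim holds with $C=2$.

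For the $\phi$-estimate I would split the full norm as $\norm{\phi_{h,\tau}-\hat\phi_{h,\tau}}_1^2=\norm{\phi_{h,\tau}-\hat\phi_{h,\tau}}_0^2+\norm{\na(\phi_{h,\tau}-\hat\phi_{h,\tau})}_0^2$. The gradient part is controlled directly by the term $\tfrac\gamma2\norm{\na\phi_{h,\tau}-\na\hat\phi_{h,\tau}}_0^2$ appearing in $\E^\phi_\alpha$. For the $L^2$ part I would expand the Bregman remainder by Taylor's theorem, writing $f(\phi)-f(\hat\phi)-f'(\hat\phi)(\phi-\hat\phi)=\tfrac12 f''(\eta)(\phi-\hat\phi)^2$ pointwise for an intermediate value $\eta$, and use the one-sided bound $f''\ge -f_1$ from (A2). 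Combining this with the explicit term $\tfrac\alpha2\norm{\phi-\hat\phi}_0^2$ and exploiting that $\alpha$ is chosen so that $\alpha-f_1>0$, the two contributions together dominate a positive multiple of $\norm{\phi_{h,\tau}-\hat\phi_{h,\tau}}_0^2$. Adding the gradient estimate then yields the first inequality.

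The dissipation estimate is purely a matter of discarding nonnegative terms and invoking the coercivity encoded in (A1). Writing $\mu':=\bmu_{h,\tau}-\hbmu_{h,\tau}$ and $q':=\bq_{h,\tau}-\hbq_{h,\tau}$, I would drop the nonnegative contributions $\tfrac1{d_0}\norm{c(\bar\phi_{h,\tau})\na\mu'-d_0\na(A(\bar\phi_{h,\tau})q')}_0^2$ and $\la\kappa(\bar\phi_{h,\tau})q',q'\ra$ from $\D_{\bar\phi_{h,\tau}}(\mu',q')$. The surviving terms are $\int_\Omega(b(\bar\phi_{h,\tau})-c(\bar\phi_{h,\tau})^2/d_0)|\na\mu'|^2\dx$ and $\eps\norm{\na q'}_0^2$; since (A1) guarantees $b(s)\ge c(s)^2/d_0+\eps$, the first is bounded below by $\eps\norm{\na\mu'}_0^2$, whence $\D_{\bar\phi_{h,\tau}}(\mu',q')\ge\eps(\norm{\na\mu'}_0^2+\norm{\na q'}_0^2)$ and the claim follows with $C=1/\eps$.

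The only step demanding genuine care is the $L^2$ control in the $\phi$-estimate, where the convexity of the relative potential must be extracted from the merely one-sided bound on $f''$ via the stabilising parameter $\alpha$; everything else is a direct consequence of the coercivity built into the assumptions. Since none of the arguments involve $h$ or $\tau$, the resulting constant $C$ depends only on $\gamma$, $f_1$, $\alpha$, $d_0$, and $\eps$, as asserted.
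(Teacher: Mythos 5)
Your proof is correct and follows exactly the route the paper intends (the paper offers no details, stating only that the estimates ``immediately follow from the Taylor expansions''): the $q$-bound is an identity with $C=2$, the $\phi$-bound combines the Bregman--Taylor remainder $\tfrac12 f''(\eta)(\phi-\hat\phi)^2\ge -\tfrac{f_1}{2}(\phi-\hat\phi)^2$ with the stabilising $\tfrac{\alpha}{2}\norm{\phi-\hat\phi}_0^2$ term, and the dissipation bound discards the nonnegative cross and $\kappa$ terms and uses $b\ge c^2/d_0+\eps$ from (A1). Note only that your step ``$\alpha-f_1>0$'' implicitly reads the paper's $\alpha=\max\{-f_1,0\}+1$ as the evidently intended $\alpha=\max\{f_1,0\}+1$ (with the literal formula and $f_1\ge 0$ one gets $\alpha=1$, which need not exceed $f_1$), which is consistent with the paper's stated purpose that $\alpha$ renders the functional convex.
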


\subsection*{Discrete stability estimate}
We can now state and prove the main result of this section.

\begin{lemma} \label{lem:fullstab}
Let (A0)--(A7) hold and $(\phi_{h,\tau},q_{h,\tau},\bmu_{h,\tau})$ be a solution of Problem~\ref{prob:full}.  
Furthermore, let $(\hat \phi_{h,\tau},\hat q_{h,\tau},\hbmu_{h,\tau}) $ be given and $\bar r_{i,h,\tau}$, $i=1,2,3$, be the residuals defined by \eqref{eq:pgp1}--\eqref{eq:pgp3}. 
Then
\begin{align*}
&\E_\alpha(\phi_{h,\tau},q_{h,\tau}|\hat\phi_{h,\tau},\hat q_{h,\tau}) \big|_{t=t^n} + 
c' \int_0^{t^{n}} 
\|\bar \mu_{h,\tau}-\hbmu_{h,\tau}\|_1^2 
+ \|\bar q_{h,\tau} -\hbq_{h,\tau} \|_1^2 \ddta \\
&  \leq C_1' \E_\alpha(\phi_{h,\tau},q_{h,\tau}|\hat\phi_{h,\tau},\hat q_{h,\tau})\big|_{t=0} 
 + C_2' \int_0^{t^{n}} \|\bar r_{1,h,\tau}\|_{-1,h}^2 + \|\bar r_{2,h,\tau}\|_1^2 + \|\bar r_{3,h,\tau}\|_{-1,h}^2 \, \ddta
\end{align*}
for all $0 \le n \le N$ with constants $c'$, $C_1'$, $C_2'$ that are independent of the discretization parameters. 
\end{lemma}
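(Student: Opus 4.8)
The plan is to prove the stability estimate by computing the time derivative of the relative energy functional $\E_\alpha(\phi_{h,\tau},q_{h,\tau}\mid\hat\phi_{h,\tau},\hat q_{h,\tau})$ along both systems and controlling the resulting terms via a Gronwall argument. First I would subtract the perturbed identities \eqref{eq:pgp1}--\eqref{eq:pgp3} from the exact discrete identities \eqref{eq:pg1}--\eqref{eq:pg3}, introducing the error variables $e_\phi=\phi_{h,\tau}-\hat\phi_{h,\tau}$, $e_q=q_{h,\tau}-\hat q_{h,\tau}$, and $e_\mu=\bar\mu_{h,\tau}-\hbmu_{h,\tau}$. The crucial structural observation is that the relative energy was designed precisely so that, after differentiating in time and inserting the natural test functions (mirroring the energy-dissipation derivation in Section~\ref{sec:existence}), the leading quadratic terms reassemble into a sign-definite relative dissipation. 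Because $\dt\nabla\phi_{h,\tau}$ and $\dt q_{h,\tau}$ are piecewise constant in time, I would exploit the identity $\la\dt a,a\ra^n=\la\dt a,\bar a\ra^n$ used earlier to legitimately replace continuous-in-time factors by their piecewise constant projections, so that the test functions $e_\mu$ and $\bar e_q$ are admissible elements of $P_0(I_n;\Vh)$.

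The key step is the treatment of the coefficient nonlinearities. Since both systems share the \emph{same} piecewise constant coefficients $b(\bar\phi_{h,\tau})$, $c(\bar\phi_{h,\tau})$, $A(\bar\phi_{h,\tau})$, $\kappa(\bar\phi_{h,\tau})$ evaluated at $\bar\phi_{h,\tau}$, the dangerous terms where the nonlinear coefficient itself differs do not appear in the subtracted equations — this is the design principle making the scheme amenable to linear-in-the-error manipulation. What remains are (i) the genuinely quadratic dissipative contributions $\la b(\bar\phi_{h,\tau})\na e_\mu,\na e_\mu\ran$, the cross term $2\la c(\bar\phi_{h,\tau})\na(A(\bar\phi_{h,\tau})\bar e_q),\na e_\mu\ran$, and $\la d_0\na(A(\bar\phi_{h,\tau})\bar e_q),\na(A(\bar\phi_{h,\tau})\bar e_q)\ran$, which combine via assumption (A1)'s condition $b(s)\ge c(s)^2/d_0+\eps$ (a completion-of-squares) into the relative dissipation bounded below by \eqref{eq:lower_bound_rel_dis_full}; and (ii) lower-order terms involving the relative chemical potential relation \eqref{eq:pgp2}, where the difference $f'(\phi_{h,\tau})-f'(\hat\phi_{h,\tau})$ together with the Taylor remainder in $\E_\alpha$ must be controlled using the convexity furnished by the shift parameter $\alpha$ and the polynomial growth of $f$ from (A2).

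I would then absorb the residual contributions by duality: terms of the form $\la\bar r_{1,h,\tau},\bar\psi\ra$ and $\la\bar r_{3,h,\tau},\bar\zeta\ra$ tested against $e_\mu$ and $\bar e_q$ are bounded using the discrete dual norm $\|\cdot\|_{-1,h}$ paired with $\|e_\mu\|_1$, $\|\bar e_q\|_1$, while $\la\bar r_{2,h,\tau},\bar\xi\ra$ tested against $\dt e_\phi$-type quantities is bounded in the full $H^1$ norm; every such product is split by Young's inequality so that the dissipation factors are partially absorbed into the left-hand side and the residual factors land in the final error term. The remaining products between error terms themselves — arising from the nonlinearities $A(\bar\phi_{h,\tau})$ and the cross-coupling between the $q$-equation and $\mu$ — are estimated using the uniform a-priori bounds of Theorem~\ref{thm:ex}, the $L^\infty(H^1)$ control of $\phi_{h,\tau}$, and Sobolev embeddings in $d\le3$, producing terms of the form $C\,\E_\alpha(\cdots)$ that feed the Gronwall inequality.

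The main obstacle I anticipate is the careful accounting of the coupled cross terms between the $\mu$-equation and the $q$-equation: the quantity $c(\bar\phi_{h,\tau})\na e_\mu$ appears with opposite signs in \eqref{eq:pgp1} (tested with $e_\mu$) and \eqref{eq:pgp3} (tested with $\bar e_q$ through the factor $A(\bar\phi_{h,\tau})$), and these must cancel up to the completion-of-squares so that no uncontrolled $\|\na e_\mu\|_0\|\na\bar e_q\|_0$ term survives with the wrong sign. Handling the factor $A(\bar\phi_{h,\tau})$ inside the gradient — via a product rule producing an $A'(\bar\phi_{h,\tau})\na\bar\phi_{h,\tau}\,\bar e_q$ term — requires the $W^{1,3}$ regularity implicitly available through the a-priori bound on $\|\na\bar\phi_{h,\tau}\|$ and the boundedness of $A'$ from (A3); estimating this lower-order piece without losing a factor of $\tau^{-1}$ or $h^{-1}$ is the delicate point, and I would resolve it by a Young splitting that puts the $\bar e_q$ factor into the relative energy and the gradient factor into the absorbable dissipation. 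Summing the per-interval estimates over $1\le n\le N$ and invoking the discrete Gronwall lemma then yields the stated bound with constants independent of $h$ and $\tau$.
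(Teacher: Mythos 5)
Your proposal follows essentially the same route as the paper: split the relative energy change over each time interval, test the difference of \eqref{eq:pg1}--\eqref{eq:pg3} and \eqref{eq:pgp1}--\eqref{eq:pgp3} with the error quantities so that the quadratic terms reassemble into the relative dissipation via $b\ge c^2/d_0+\eps$, bound the residuals by duality and Young's inequality, and close with the discrete Gronwall lemma. Your structural observations --- that the coefficients are evaluated at the same $\bar\phi_{h,\tau}$ in both systems so no coefficient differences survive the subtraction, and that the $c(\bar\phi_{h,\tau})\nabla(\bmu_{h,\tau}-\hbmu_{h,\tau})$ cross terms must cancel through a completion of squares --- are exactly the mechanisms the paper uses.

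One concrete step is missing from your sketch. The left-hand side of the lemma contains the full norm $\|\bmu_{h,\tau}-\hbmu_{h,\tau}\|_1^2$, but the relative dissipation only controls the gradient $\|\nabla(\bmu_{h,\tau}-\hbmu_{h,\tau})\|_0^2$; moreover, estimating the pairing $\la \bar r_{1,h,\tau}, \bmu_{h,\tau}-\hbmu_{h,\tau}+\bar r_{2,h,\tau}\ran$ by $\|\bar r_{1,h,\tau}\|_{-1,h}$ requires the full $H^1$ norm of the test function, not just its seminorm. Your Young/absorption scheme cannot manufacture the missing $L^2$ part from the dissipation alone. The paper closes this by testing \eqref{eq:pg2} and \eqref{eq:pgp2} with $\bar\xi_{h,\tau}=1$, which expresses the mean value $\la \bmu_{h,\tau}-\hbmu_{h,\tau},1\ra$ in terms of $f'(\phi_{h,\tau})-f'(\hat\phi_{h,\tau})$ and $\bar r_{2,h,\tau}$, and hence bounds it by $\E_\alpha^\phi(\phi_{h,\tau}|\hat\phi_{h,\tau})$ plus the residual (see \eqref{eq:mu_l2_bound}); Poincar\'e's inequality then upgrades the gradient control to the full $H^1$ norm. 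This Poincar\'e-plus-mean-value argument needs to be added to your plan. Apart from that, the remaining ingredients (eliminating $\dt(\phi_{h,\tau}-\hat\phi_{h,\tau})$ by re-testing \eqref{eq:pg1}, \eqref{eq:pgp1} with $\bmu_{h,\tau}-\hbmu_{h,\tau}+\bar r_{2,h,\tau}$, the Taylor-remainder term handled via the growth bounds on $f$ and Sobolev embedding, and the $\alpha$-weighted $L^2$ term) are handled exactly as you anticipate.
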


\begin{proof}
The remainder of this section is devoted to the proof of this assertion. We start with splitting $\E_\alpha(\phi_{h,\tau},q_{h,\tau}|\hat \phi_{h,\tau},\hat q_{h,\tau}) = \E_\alpha^\phi(\phi_{h,\tau}|\hat \phi_{h,\tau})+\E_\alpha^q(q_{h,\tau}|\hat q_{h,\tau})$ and then estimate the change in the two parts of the relative energy over a single time interval separately. 

\subsection*{Bulk stress}
By the fundamental theorem of calculus, we obtain 
\begin{align*}
\E_\alpha^q(q_{h,\tau}|\hat q_{h,\tau}) \big|_{t^{n-1}}^{t^n} 
&=\la \dt q_{h,\tau} - \dt \hat q_{h,\tau},  q_{h,\tau} - \hat q_{h,\tau}\ran \\
&= \la \dt q_{h,\tau} - \dt \hat q_{h,\tau},  \bq_{h,\tau}-\hbq_{h,\tau}\ran = (*). 
\end{align*}
In the second step, we have used the definition of the orthogonal projection $\bar q = \bar \pi_\tau^0 q$.
Using the test function $\bar \zeta_{h,\tau}=\bq_{h,\tau}-\hbq_{h,\tau}$ in \eqref{eq:pg3} and \eqref{eq:pgp3}, we further obtain
\begin{align}
(*) = &- \la \kappa(\bar\phi_{h,\tau})(\bq_{h,\tau}-\hbq_{h,\tau}),\bq_{h,\tau}-\hbq_{h,\tau} \ran \label{eq:q_exp}\\
& - \eps \|\nabla (\bq_{h,\tau}-\hbq_{h,\tau})\|_{0}^2 + \la \bar r_{3,h,\tau},\bq_{h,\tau}-\hbq_{h,\tau}\ran \nonumber\\
&-\la d_0 \na(A(\bar\phi_{h,\tau})(\bq_{h,\tau}-\hbq_{h,\tau}))-c(\bar\phi_{h,\tau})\na(\bmu_{h,\tau}-\hbmu_{h,\tau}), \na(A(\bar\phi_{h,\tau})(\bq_{h,\tau}-\hbq_{h,\tau}))\ran.  \nonumber
\end{align}
The first two terms already appear in the dissipation functional. The last one will be abbreviated by $\mathcal{R}^n_q$ in the following and kept for later. 
The second term can be estimated by
\begin{align*}
\la\bar r_{3,h,\tau} ,\bq_{h,\tau}-\hbq_{h,\tau}\ran 
 \leq \delta \int_{I^n} \norm{\na(\bq_{h,\tau}-\hbq_{h,\tau})}_1^2 + C(\delta) \norm{\bar r_{3,h,\tau}}^2_{-1,h}\ddta.
\end{align*}
The parameter $\delta>0$ stems from the application of Young's inequality and will be chosen sufficiently small, but independent of the discretization parameters, to absorb the corresponding terms on the left-hand side. As a consequence, $C(\delta) \approx \delta^{-1}$ will only depend on the bounds in our assumptions.  
In summary, we thus obtain 
\begin{align}
&\E_\alpha^q(q_{h,\tau}|\hat q_{h,\tau})\big|_{t^{n-1}}^{t^n} \label{eq:disc_rel_en_q}
\leq -  c_1' \int_{I^n} \|\bq_{h,\tau}-\hbq_{h,\tau}\|_1^2 \ddta  + \mathcal{R}^n_q  + C_1' \int_{I_n} \norm{ \bar r_{3,h,\tau}}_{-1,h}^2 \ddta
\end{align}
with positive constants $c_1'$, $C_1'$ independent of the discretization parameters.
The first term on the right-hand side has a negative term and allows to compensate similar terms arising later on.

\subsection*{Cahn-Hilliard}
By the fundamental theorem of calculus, repeated application of the chain rule, the definition of the relative energy, and elementary computations, we obtain
\begin{align}
\E_\alpha^{\phi}((\phi_{h,\tau}&|\hat \phi_{h,\tau}) \big|_{t^{n-1}}^{t^n} 
= \gamma \la \na(\bar\phi_{h,\tau}-\hbphi_{h,\tau}),\na\dt(\phi_{h,\tau}-\hat\phi_{h,\tau}) \ran \notag\\
& + \la f'(\phi_{h,\tau})-f'(\hat\phi_{h,\tau}),\dt(\phi_{h,\tau}-\hat\phi_{h,\tau}) \ran + \alpha\la \bar\phi_{h,\tau}-\hbphi_{h,\tau},\dt(\phi_{h,\tau}-\hat\phi_{h,\tau}) \ra \notag\\
& + \la f'(\phi_{h,\tau})-f'(\hat\phi_{h,\tau})-f''(\hat\phi_{h,\tau}),\dt\hat\phi_{h,\tau} \ran 
 = I_1 + I_2 + I_3 + I_4.\label{eq:phi_rel_comp} 
\end{align}
\subsubsection*{Step~1.}
By testing \eqref{eq:pg2} and \eqref{eq:pgp2} with $\bar\xi_{h,\tau}=\dt(\phi_{h,\tau}-\hat\phi_{h,\tau})$, we obtain
\begin{align*}
I_1+I_2 & = \la \bmu_{h,\tau} - \hbmu_{h,\tau}+\bar r_{2,h,\tau},\dt(\phi_{h,\tau}-\hat\phi_{h,\tau}) \ran = (*).
\end{align*}
Next we use $\bar\psi_{h,\tau}=\bmu_{h,\tau}-\hbmu_{h,\tau}+\bar r_{2,h,\tau}$ as test function in \eqref{eq:pg1} and \eqref{eq:pgp1} to deduce
\begin{align*}
 &(*)= \la \bmu_{h,\tau} - \hbmu_{h,\tau}+\bar r_{2,h,\tau},\dt(\phi_{h,\tau}-\hat\phi_{h,\tau}) \ran \\
 &= -\la b(\bar\phi_{h,\tau})\na(\bmu_{h,\tau}-\hbmu_{h,\tau}) - c(\bar\phi_{h,\tau})\na(A(\bar\phi_{h,\tau})(\bq_{h,\tau}-\hbq_{h,\tau})),\na(\bmu_{h,\tau}-\hbmu_{h,\tau}+\bar r_{2,h,\tau}) \ran \\
 &\qquad \qquad \qquad + \la \bar r_{1,h,\tau},\bmu_{h,\tau}-\hbmu_{h,\tau}+\bar r_{2,h,\tau} \ran 
 = (i) + (ii).
\end{align*}
The first term can be combined with the remainder $\mathcal{R}^q_q$ in \eqref{eq:disc_rel_en_q}. By decomposition of the dissipative terms, similar as in the proof of the discrete energy-dissipation identity, estimating coefficient from below, and application of Young's inequality, we get 
\begin{align*}
\mathcal{R} + (i) 
&\leq \int_{I_n}  C'\norm{\bar r_{2,h,\tau}}_{1}^2  - c_2'  \|\nabla (\bar\mu_{h,\tau} - \hbmu_{h,\tau})\|_0^2 \\
&\qquad \qquad - c_3' \|c(\bar\phi_{h,\tau}) \nabla (\bmu_{h,\tau} - \hbmu_{h,\tau})- d_0 \na(A(\bar\phi_{h,\tau})(\bq_{h,\tau}-\hbq_{h,\tau}))\|_0^2 \ddta.
\end{align*}
The last two terms have a negative sign and will be used to compensate for terms of this form in the other estimates.
Using the definition of the discrete dual norm, Poincar\'e's inequality, and the bounds for the coefficients, the second term can be further estimated by
\begin{align*}
(ii) &\leq \int_{I_n}\norm{\bar r_{1,h,\tau}}_{-1}\left(\norm{\bmu_{h,\tau}-\hbmu_{h,\tau}}_1+\norm{\bar r_{2,h,\tau}}_1\right) \ddta \\
&\leq \int_{I_n} C_1' \norm{\bar r_{1,h,\tau}}_{-1,h}^2 + C_2'|\la \bmu_{h,\tau}-\hbmu_{h,\tau},1 \ra|^2 + \delta \, \|\nabla (\bmu_{h,\tau}-\hbmu_{h,\tau}) \|_0^2 +  C_3' \norm{\bar r_{2,h,\tau}}_1^2 \ddta.
\end{align*}
The parameter $\delta>0$ will again be chosen sufficiently small, but independent of the discretization parameters. 
By testing the variational identities \eqref{eq:pg2} and \eqref{eq:pgp2} with $\bar\xi_{h,\tau}=1$, we see that
\begin{align} \label{eq:mu_l2_bound}
   \int_{I_n} |\la \bmu_{h,\tau}-\hbmu_{h,\tau},1 \ra|^2 \ddta
    &= \int_{I_n}|\la f'(\phi_{h,\tau}) -f'(\hat\phi_{h,\tau}) +\bar r_{2,h,\tau},1\ra|^2 \ddta \\
    &\leq \int_{I_n} C_4' \norm{\bar r_{2,h,\tau}}_{0,1}^2 + C_5' \|\phi_{h,\tau} - \hat \phi_{h,\tau}\|_1^2 \ddta. \notag
\end{align}
The constants $C_4',C_5'$ depend on the bounds in the assumptions and the uniform a-priori bounds for the discrete solution established in Theorem~\ref{thm:ex}. 
In summary, we can thus estimate 
\begin{align*}
(ii) \le \int_{I_n}  \delta \, \|\nabla (\bmu_{h,\tau}-\hbmu_{h,\tau}) \|_0^2 + C_6' \norm{\bar r_{1,h,\tau}}_{-1,h}^2 + C_7' \norm{\bar r_{2,h,\tau}}_1^2 + C_8'  \E^\phi_\alpha(\phi_{h,\tau}|\hat \phi_{h,\tau}) \ddta.
\end{align*}
The first term can later be absorbed by dissipation terms and choosing $\delta$ appropriately.

\subsubsection*{Step~2.}
By testing the variational identities \eqref{eq:pg1} and \eqref{eq:pgp1} with $\bar \psi_{h,\tau}=\alpha(\bar\phi_{h,\tau}- \hat{\bar\phi}_{h,\tau})$, we get 
\begin{align*}
I_3 &= \alpha \la \bar\phi_{h,\tau}-\hbphi_{h,\tau},\dt(\phi_{h,\tau}-\hat\phi_{h,\tau})\ran 
= \alpha \la \bar r_{1,h,\tau}, \bar \phi_{h,\tau}-\hat{\bar\phi}_{h,\tau}\ran \\
&\qquad  -\alpha  \la b(\bar\phi_{h,\tau})\na(\bmu_{h,\tau}-\hbmu_{h,\tau}) - c(\bar\phi_{h,\tau})\na(A(\bar\phi_{h,\tau})(\bq_{h,\tau}-\hbq_{h,\tau})), \na(\bar\phi_{h,\tau}-\hbphi_{h,\tau})\ran. \end{align*}
With similar arguments as before, we then obtain
\begin{align*}
I_3 &\leq \int_{I_n}  C_1' \norm{\bar r_{1,h,\tau}}_{-1,h}^2 + C_2' \norm{\phi_{h,\tau}-\hat\phi_{h,\tau}}_1^2 + C_3' \delta \|\nabla (\bmu_{h,\tau}-\hbmu_{h,\tau}) \|_0^2 \\
& \qquad \qquad + C_4' \delta \norm{c(\bar\phi_{h,\tau})\na(\bmu_{h,\tau}-\hbmu_{h,\tau}) - d_0 \na(A(\bar\phi_{h,\tau})(\bq_{h,\tau}-\hbq_{h,\tau}))}_0^2 \ddta.  
\end{align*}
For $\delta$ sufficiently small, the corresponding term can again be absorbed by dissipation terms.
\subsubsection*{Step~3.}
From the bounds in assumption (A3), we can deduce that 
\begin{align*}
|f'(\phi_{h,\tau}) - f'(\hat \phi_{h,\tau}) - f''(\hat \phi_{h,\tau})(\phi_{h,\tau} - \hat \phi_{h,\tau})| 
&\le \left(f_2^{(3)} + f_3^{(3)}(|\phi_{h,\tau}| + |\hat \phi_{h,\tau}|) \right) |\phi_{h,\tau} - \hat \phi_{h,\tau}|^2.
\end{align*}
Using the H\"older inequality, embedding estimates, and the uniform bounds for $\phi_{h,\tau}$ \eqref{eq:full_apriori}, we can further bound $I_4$ from above as follows
\begin{align*}
I_4 &\le \int_{I_n}\|\dt \hat \phi_{h,\tau}\|_{0} \|f'(\phi_{h,\tau}) - f'(\hat \phi_{h,\tau}) - f''(\hat \phi_{h,\tau})(\phi_{h,\tau} - \hat \phi_{h,\tau})\|_{0} \ddta \\
&\le C_1' \, \int_{I^n} \|\phi_{h,\tau} - \hat \phi_{h,\tau}\|_{0,6}^2 \ddta 
\le C_2' \, \int_{I^n} \E^\phi_\alpha(\phi_{h,\tau}|\hat \phi_{h,\tau}) \ddta.
\end{align*}
The constants $C_1'$, $C_2'$ only depend on the bounds of the coefficients and available uniform bounds for the discrete and exact solutions.

\subsection*{Stability estimate.}
Combining all bounds derived so far, and using \eqref{eq:mu_l2_bound} once again in order to bound $\|\bar \mu_{h,\tau} - \hbmu_{h,\tau}\|^2_0$, we find the following inequality
\begin{align}
&\E_\alpha(\phi_{h,\tau},q_{h,\tau}|\hat\phi_{h,\tau},\hat q_{h,\tau})\big|_{t^{n-1}}^{t^n} 
+ c' \int_{I_n} \|\bmu_{h,\tau}-\hbmu_{h,\tau}\|_1^2 + \|\bar q_{h,\tau}-\hbq_{h,\tau})\|_1^2 \ddta\label{eq:full_dsic_rel_prelim1}\\
&\leq 
    C_1' \int_{I_n} \E_\alpha(\phi_{h,\tau},q_{h,\tau}|\hat\phi_{h,\tau},\hat q_{h,\tau}) \ddta 
    + C_2' \int_{I_n} \norm{\bar r_{1,h,\tau}}_{-1,h}^2 + \norm{\bar r_{2,h,\tau}}_1^2 + \norm{\bar r_{3,h,\tau}}_{-1,h} \ddta.  \nonumber
\end{align}
We note that the constants $c'$, $C_1'$, $C_2'$ only depend on available bounds for the discrete and continuous solution and for the parameters. 
The estimate of Lemma~\ref{lem:fullstab} then follows by recursive application of this inequality and the discrete Gronwall lemma \cite{Wloka}.
\end{proof}

\section{Error estimates}
\label{sec:error}

In order to prove the global error estimates of Theorem~\ref{thm:full_error_est}, we define
\begin{align} \label{eq:fullproj_1}
\hat \phi_{h,\tau} = \I_\tau^1 \pi_h^1 \phi 
\quad \text{and} \quad 
\hbmu_{h,\tau} &= \bar \pi_\tau^0 \pi_h^0 \mu \quad \text{ and} \quad \hat q_{h,\tau}=\I_\tau^1\pi_h^0 q 
\end{align}
as approximations for the continuous solution. We can then split the error into a projection error and a discrete evolution error, using standard error estimates for the first, and the discrete stability results of the previous section to bound the second error component. 

\subsection*{Projection errors.}
By standard estimates of polynomial interpolation and projection errors, we obtain the following estimates; see \cite{BrennerScott} and the appendix. 
\begin{lemma}\label{lem:projerr}
Let (A6)--(A7) hold and $(\hat\phi_{h,\tau},\hbmu_{h,\tau},\hat q_{h,\tau})$ be given as above. Let $(\phi,\mu,q)$ be a smooth solution of \eqref{eq:chq1}--\eqref{eq:chq3} such that \eqref{eq:reg1}--\eqref{eq:reg3} holds.  Then
\begin{align*}
&\norm{\hat\phi_{h,\tau} - \phi}_{L^\infty(H^1)}^2 \leq C(h^4 + \tau^4),& &\norm{\hbmu_{h,\tau}-\bmu}_{L^2(H^1)}^2 \leq Ch^4, \\
&\norm{\hat q_{h,\tau} - q}_{L^\infty(L^2)}^2 \leq C(h^4 + \tau^4),& &\norm{\hbq_{h,\tau} - \bar q}_{L^2(H^1)}^2 \leq Ch^4, \\
&\|\dt \hat\phi_{h,\tau}-\bar \pi_\tau^0 (\dt \phi)\|^2_{L^2(H^{-1})} \leq C h^4,&
&\|\dt\hat q_{h,\tau}-\bar \pi_\tau^0 (\dt q)\|^2_{L^2(H^{-1})} \leq Ch^4.
\end{align*}
The constant $C$ in these estimates depends only on a-priori bounds for the solution components in appropriate norms and constants in the assumptions.
\end{lemma}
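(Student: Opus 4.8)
The plan is to estimate each of the six quantities by the \emph{triangle inequality} after splitting every error into a purely spatial and a purely temporal contribution, exploiting that the spatial operators $\pi_h^0,\pi_h^1$ act on a different variable than the temporal operators $\I_\tau^1$, $\bar\pi_\tau^0$ and $\dt$, and hence commute with them. For instance, writing $\hat\phi_{h,\tau}-\phi = \I_\tau^1(\pi_h^1\phi-\phi) + (\I_\tau^1\phi-\phi)$, the first term is the time interpolant of a space-projection error and the second is a pure time-interpolation error; the same template applies to $\hat q_{h,\tau}-q$ and, after inserting $\bar\pi_\tau^0$, to $\hbq_{h,\tau}-\bar q$, while $\hbmu_{h,\tau}-\bmu = \bar\pi_\tau^0(\pi_h^0\mu-\mu)$ has no temporal part at all. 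In this way the proof reduces to standard one-dimensional interpolation and projection estimates in Bochner spaces, each of which can be quoted from \cite{BrennerScott} and Appendix~\ref{app:proj}.

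For the \textbf{spatial contributions} I would invoke the optimal $P_2$ estimates $\norm{\pi_h^1 v - v}_1 \le C h^2\norm{v}_3$ and $\norm{\pi_h^0 v - v}_1 \le C h^2\norm{v}_3$, together with their $L^2$- and $H^{-1}$-counterparts obtained by the Aubin--Nitsche duality argument. Since $\I_\tau^1$ and $\bar\pi_\tau^0$ are $L^2$- and $H^1$-stable in time (being a nodal interpolant, respectively an orthogonal projection), they pass through the spatial norms without loss, so these terms are controlled by $h^2$ times $\norm{\phi}_{L^\infty(H^3)}$, $\norm{\mu}_{L^2(H^3)}$ or $\norm{q}_{L^2(H^3)}$; the required $L^\infty(H^3)$-bound for $\phi$ follows from the embedding $H^1(0,T;H^3)\hookrightarrow C([0,T];H^3)$, and the remaining spatial regularity is supplied directly by \eqref{eq:reg1}--\eqref{eq:reg3}.

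For the \textbf{temporal contributions} I would use the second-order interpolation estimate $\norm{\I_\tau^1 v - v}_{L^2(X)} \le C\tau^2\norm{\dtt v}_{L^2(X)}$ and its $L^\infty(0,T)$-in-time analogue, the latter obtained by a Sobolev embedding in time on each subinterval $I_n$, where the error vanishes at both endpoints; the second time-derivative is supplied by the $H^2(0,T;H^1)$-regularity. The two dual-norm estimates for the time derivatives are the cleanest: using the identity $\dt \I_\tau^1 = \bar\pi_\tau^0 \dt$ on each interval, one finds $\dt\hat\phi_{h,\tau}-\bar\pi_\tau^0(\dt\phi) = \bar\pi_\tau^0\big(\pi_h^1(\dt\phi)-\dt\phi\big)$, which is a \emph{pure} spatial projection error acting on $\dt\phi \in L^2(0,T;H^3)$ and is therefore bounded in $L^2(H^{-1})$ by $h^2$; the analogous identity disposes of the $q$-term.

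The step I expect to be the main obstacle is the $L^\infty$-in-time interpolation estimate: unlike the routine $L^2(0,T)$ bound, it forces a careful use of the Sobolev embedding in time under the mere $H^2(0,T;\cdot)$ regularity, and one has to track the resulting $\tau$-power and match each regularity hypothesis to the term it controls. A second, milder subtlety is the mixed operator $\bar\pi_\tau^0\I_\tau^1$ appearing in $\hbq_{h,\tau}-\bar q$, whose temporal part is a trapezoidal-type quadrature error that must be estimated through the cancellation of $\bar\pi_\tau^0\I_\tau^1$ against the exact mean $\bar\pi_\tau^0$. All remaining manipulations are standard and are relegated to Appendix~\ref{app:proj}.
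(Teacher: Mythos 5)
Your proposal matches the paper's treatment: the paper gives no detailed proof of this lemma, simply invoking the standard spatial projection estimates, the temporal interpolation/projection estimates, and the commutation identity $\dt \I_\tau^1 = \bar\pi_\tau^0 \dt$ collected in Appendix~A, which is precisely the spatial/temporal splitting via the triangle inequality that you carry out. The two subtleties you flag (the $\tau$-power of the $L^\infty$-in-time interpolation error under only $H^2$-in-time regularity, and the trapezoidal-type cancellation in $\hbq_{h,\tau}-\bar q$) are genuine but are left equally implicit in the paper.
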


\subsection*{Residuals}
In the next step, we identify and then estimate the residuals arising from the above choice of approximate functions and \eqref{eq:pgp1}--\eqref{eq:pgp3}.
\begin{lemma}
Let the assumptions of Theorem~\ref{thm:full_error_est} hold and $(\hat \phi_{h,\tau},\hbmu_{h,\tau},\hat q_{h,\tau})$ be defined as projections of the smooth solution $(\phi,\mu,q)$ via \eqref{eq:fullproj_1}. Then \eqref{eq:pg1}--\eqref{eq:pg3} holds with the residuals $\bar r_{i,h,\tau}$ defined by
\begin{align*}
\la \bar r_{1,h,\tau}, \bar \psi_{h,\tau}\ran 
&= \la \dt (\pi_h^1 \phi - \phi), \bar \psi_{h,\tau}\ra + \la b(\bar\phi_{h,\tau})\na\hbmu_{h,\tau} -c(\bar\phi_{h,\tau})\na(A(\bar\phi_{h,\tau})\hbq_{h,\tau}), \nabla \bar\psi_{h,\tau}\ran \\
& \qquad \qquad- \la b(\phi)\na\mu - c(\phi) \na(A(\phi)q)) , \nabla \bar\psi_{h,\tau}\ran,\\
\la  \bar r_{2,h,\tau}, \bar \xi_{h,\tau} \ran
&= \la \hbmu_{h,\tau} - I_\tau^1 \mu, \bar \xi_{h,\tau}\ran 
      + \gamma \la \nabla (\hbphi_{h,\tau} - \I_\tau^1 \phi), \nabla \bar \xi_{h,\tau}\ran  \\
      &+ \la f'(\hat \phi_{h,\tau}) - \I_\tau^1 f'(\phi), \bar \xi_{h,\tau}\ran ,\\
\la  \bar r_{3,h,\tau},\bar \zeta_{h,\tau} \ran 
&=  \la \kappa(\bar\phi_{h,\tau})\hbq_{h,\tau} - \kappa(\phi)\na q,\bar\zeta_{h,\tau} \ran  
+ \varepsilon \la \na\hbq_{h,\tau} - \na q,\na\bar\zeta_{h,\tau} \ran  \\
& \qquad \qquad + \la d_0 \na(A(\bar\phi_{h,\tau})\hbq_{h,\tau})-c(\bar\phi_{h,\tau})\na\hbmu_{h,\tau},\na(A(\bar\phi_{h,\tau})\bar\zeta_{h,\tau})\ran \\
& \qquad\qquad -\la d_0 \na(A(\phi)q) - c(\phi)\na\mu, \na(A(\phi)\bar\zeta_{h,\tau})\ran.
\end{align*}
\end{lemma}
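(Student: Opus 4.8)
The residuals $\bar r_{1,h,\tau}$, $\bar r_{2,h,\tau}$, $\bar r_{3,h,\tau}$ are already \emph{defined} implicitly through the perturbed identities \eqref{eq:pgp1}--\eqref{eq:pgp3}, obtained by inserting the projected approximations $\hat\phi_{h,\tau}=\I_\tau^1\pi_h^1\phi$, $\hat q_{h,\tau}=\I_\tau^1\pi_h^0 q$, and $\hbmu_{h,\tau}=\bar\pi_\tau^0\pi_h^0\mu$ from \eqref{eq:fullproj_1}. The plan is therefore purely to rewrite these defining expressions in the claimed \emph{difference} form. Since the exact solution $(\phi,\mu,q)$ satisfies the continuous weak identities \eqref{eq:weak1}--\eqref{eq:weak3} pointwise for every $t\in[0,T]$, I would, for each $i$, integrate the corresponding continuous equation over the time slab $I^n$ against the piecewise-constant-in-time test function and subtract the resulting zero from the definition of $\bar r_{i,h,\tau}$. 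This immediately turns each flux contribution into a difference of the frozen-coefficient projected flux and the exact flux, which is exactly the last block of terms in each claimed formula.

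First I would treat the time-derivative terms. Using $\hat\phi_{h,\tau}=\I_\tau^1\pi_h^1\phi$ together with the elementary identity $\dt\I_\tau^1 v=\bar\pi_\tau^0(\dt v)$ on each interval $I^n$ (both equal the difference quotient), and using that the test functions lie in $P_0(I^n;\Vh)$ and are therefore constant in time, the time projection $\bar\pi_\tau^0$ may be dropped inside the slab integral. Hence
\[
\la\dt\hat\phi_{h,\tau}-\dt\phi,\bar\psi_{h,\tau}\ran=\la\dt(\pi_h^1\phi-\phi),\bar\psi_{h,\tau}\ran ,
\]
and the identical manipulation with $\pi_h^0$ produces the analogous $q$-contribution. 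This accounts for the leading term of $\bar r_{1,h,\tau}$ and the corresponding one entering $\bar r_{3,h,\tau}$.

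The linear second equation is handled by a commutation argument. Because \eqref{eq:weak2} holds at every time and the variational forms appearing there are linear and time-independent, applying the nodal interpolation gives the exact-in-time identity $\la\I_\tau^1\mu,\xi\ra=\gamma\la\na\I_\tau^1\phi,\na\xi\ra+\la\I_\tau^1 f'(\phi),\xi\ra$ for all $t$. Integrating this over $I^n$ against the time-constant $\bar\xi_{h,\tau}$ yields a vanishing combination that, subtracted from the definition of $\bar r_{2,h,\tau}$, reproduces precisely the three claimed differences $\hbmu_{h,\tau}-\I_\tau^1\mu$, $\na(\hbphi_{h,\tau}-\I_\tau^1\phi)$, and $f'(\hat\phi_{h,\tau})-\I_\tau^1 f'(\phi)$. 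The flux groupings in $\bar r_{1,h,\tau}$ and $\bar r_{3,h,\tau}$ follow in the same way from subtracting the slab integrals of \eqref{eq:weak1} and \eqref{eq:weak3}, keeping the frozen coefficients $b(\bar\phi_{h,\tau})$, $c(\bar\phi_{h,\tau})$, $A(\bar\phi_{h,\tau})$, $\kappa(\bar\phi_{h,\tau})$ paired against their exact counterparts $b(\phi),c(\phi),A(\phi),\kappa(\phi)$.

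I expect the main obstacle to be bookkeeping rather than analysis: one must track consistently where the piecewise-constant time projection $\bar\pi_\tau^0$ (attached to the frozen state $\bar\phi_{h,\tau}$ and to the constant test functions) versus the continuous interpolant $\I_\tau^1$ (attached to the exact data) appears, and ensure the nonlinearity is represented through $\I_\tau^1 f'(\phi)$ rather than $f'(\I_\tau^1\phi)$, since it is this specific grouping that is later estimated by the interpolation bounds of Lemma~\ref{lem:projerr}. The most delicate single step is the third equation, where the weighted test direction $\na(A(\bar\phi_{h,\tau})\bar\zeta_{h,\tau})$ must be matched against the exact $\na(A(\phi)\bar\zeta_{h,\tau})$; the residual has to be arranged so that each term is a clean difference to which coefficient Lipschitz bounds and the projection estimates apply, which is precisely what makes the subsequent residual estimation modular.
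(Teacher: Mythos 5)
Your proposal is correct and follows essentially the same route as the paper, whose proof consists only of the remark that the identities follow from the variational characterizations of the exact solution and its projections together with elementary manipulations; your plan of subtracting the slab-integrated continuous weak identities from the defining perturbed equations, using the commutation $\dt \I_\tau^1 = \bar\pi_\tau^0 \dt$ together with the time-constancy of the test functions, and keeping the nonlinearity grouped as $\I_\tau^1 f'(\phi)$ rather than $f'(\I_\tau^1\phi)$ is exactly what is intended. The one detail you leave implicit is that the analogous time-derivative contribution $\la \dt(\pi_h^0 q - q), \bar\zeta_{h,\tau}\ran$ in the third equation vanishes identically by the $L^2$-orthogonality of $\pi_h^0$ against $\Vh$-valued test functions, which is why no such term appears in the stated formula for $\bar r_{3,h,\tau}$.
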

\begin{proof}
The identities follow directly from the variational principles characterizing the exact smooth solution and its discrete projection, and some elementary manipulations.
\end{proof}

Using interpolation and projection error estimates one can derive the following bounds for the residuals after some tedious calculations. 
\begin{lemma}\label{lem:est_res_full}
Under the assumptions of the previous lemma, we have 
\begin{align*}
 \int_{I_n} \norm{\bar  r_{1,h,\tau}}_{-1,h}^2 
 + \norm{\bar  r_{2,h,\tau}}_{1}^2 
 + \norm{\bar  r_{3,h,\tau}}_{-1,h}^2 \ddta 
 &\le C_1' (h^4 + \tau^4) \\
 &+ C_2' \int_{I_n} \E_\alpha(\phi_{h,\tau},q_{h,\tau}|\hat \phi_{h,\tau},\hat q_{h,\tau}) \ddta.
\end{align*}
The constants $C_1'$, $C_2'$ are independent of the discretization parameters.
\end{lemma}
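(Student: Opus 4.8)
The plan is to estimate each residual norm separately by inserting suitable intermediate projectors, exploiting the orthogonality properties \eqref{eq:defl2proj}--\eqref{eq:defh1proj} and the interpolation estimates of Lemma~\ref{lem:projerr}. First I would rewrite each $\bar r_{i,h,\tau}$ as a sum of a \emph{pure projection/interpolation remainder} and a \emph{nonlinear consistency remainder}. The former consists of terms like $\dt(\pi_h^1\phi-\phi)$, $\hbmu_{h,\tau}-\I_\tau^1\mu$, or $\varepsilon\na(\hbq_{h,\tau}-q)$, which are controlled directly by Lemma~\ref{lem:projerr} and yield the $O(h^4+\tau^4)$ contributions. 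For the dual norms $\norm{\cdot}_{-1,h}$ I would use the definition $\norm{r}_{-1,h}=\sup_{v_h}\la r,v_h\ra/\norm{v_h}_1$, so that a term such as $\la \dt(\pi_h^1\phi-\phi),\bar\psi_{h,\tau}\ra$ is bounded by $\norm{\dt(\pi_h^1\phi-\phi)}_{-1}\norm{\bar\psi_{h,\tau}}_1$, matching the $L^2(H^{-1})$ estimate of Lemma~\ref{lem:projerr}.

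Next I would treat the nonlinear transport terms, which are the genuinely delicate part. Consider the difference $\la b(\bar\phi_{h,\tau})\na\hbmu_{h,\tau}-b(\phi)\na\mu,\na\bar\psi_{h,\tau}\ra$ in $\bar r_{1,h,\tau}$, and similarly the quadratic stress terms in $\bar r_{3,h,\tau}$ involving $A(\bar\phi_{h,\tau})\hbq_{h,\tau}$. The strategy is to add and subtract so as to split each into a factor where only the \emph{argument} of the coefficient differs (e.g. $b(\bar\phi_{h,\tau})-b(\phi)$) and a factor where only the multiplied field differs (e.g. $\na(\hbmu_{h,\tau}-\mu)$). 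For the coefficient differences I would use the Lipschitz bounds on $b,c,A,\kappa$ from (A1)--(A4) together with $\bar\phi_{h,\tau}-\phi=(\bar\phi_{h,\tau}-\hbphi_{h,\tau})+(\hbphi_{h,\tau}-\bar\pi_\tau^0\phi)+(\bar\pi_\tau^0\phi-\phi)$; the middle and last pieces are projection errors, while the first piece $\bar\phi_{h,\tau}-\hbphi_{h,\tau}$ produces precisely the relative-energy term $\E_\alpha(\phi_{h,\tau},q_{h,\tau}|\hat\phi_{h,\tau},\hat q_{h,\tau})$ that appears with constant $C_2'$ on the right-hand side. For the field differences I would invoke the $L^\infty(W^{1,3})$ regularity \eqref{eq:reg2}--\eqref{eq:reg3} to keep one factor in $L^3$ and pair it via H\"older with the $L^6$ embedding of $H^1$ in $d\le 3$.

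A recurring technical point is the mismatch between piecewise-constant-in-time projections and the nonlinear coefficients evaluated at $\bar\phi_{h,\tau}$: terms such as $f'(\hat\phi_{h,\tau})-\I_\tau^1 f'(\phi)$ in $\bar r_{2,h,\tau}$ require commuting the nonlinearity past the temporal interpolation, which I would handle by a Taylor expansion of $f'$ combined with the growth bounds in (A2) and the uniform $L^\infty(H^1)$ a-priori bound \eqref{eq:full_apriori} from Theorem~\ref{thm:ex}. The inverse inequality permitted by quasi-uniformity (A6) may be needed to convert occasional $L^\infty$ factors of discrete functions into $h$-weighted $L^2$ norms, though I expect the continuous $L^\infty(W^{1,3})$ regularity to make most such steps unnecessary.

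The main obstacle will be the quadratic stress terms in $\bar r_{3,h,\tau}$, namely $d_0\na(A(\bar\phi_{h,\tau})\hbq_{h,\tau})-d_0\na(A(\phi)q)$ tested against $\na(A(\bar\phi_{h,\tau})\bar\zeta_{h,\tau})$. Expanding the product rule introduces terms like $A'(\bar\phi_{h,\tau})\na\bar\phi_{h,\tau}\,\hbq_{h,\tau}$, which are genuinely trilinear in the discrete data; controlling the gradient $\na\bar\phi_{h,\tau}$ in a norm compatible with the available regularity is the crux. Here I would again rely on \eqref{eq:reg1} giving $\phi\in H^1(H^3)\hookrightarrow L^\infty(W^{1,\infty})$ in the relevant dimension, so that the continuous gradient is bounded, and then absorb the discrete deviation into projection errors plus a relative-energy term. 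Bounding the dual norm $\norm{\bar r_{3,h,\tau}}_{-1,h}$ rather than a stronger norm is what makes this feasible, since the problematic second-derivative terms can be moved onto the test function $\bar\zeta_{h,\tau}$ by integration by parts before estimating. Once all contributions are collected, the $O(h^4+\tau^4)$ terms and the $\E_\alpha$ terms assemble into the claimed bound, completing the proof.
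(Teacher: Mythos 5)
Your overall architecture (split each residual into projection/interpolation remainders plus nonlinear consistency terms, use the Lipschitz bounds on the coefficients, H\"older with $H^1\hookrightarrow L^6$, and let the piece $\bar\phi_{h,\tau}-\hbphi_{h,\tau}$ generate the relative-energy contribution) matches the paper's proof in Appendix~B. However, there is a genuine gap at exactly the point you identify as the crux. After the product-rule expansion of $\nabla(A(\bar\phi_{h,\tau})\hbq_{h,\tau})$, the third residual contains the term $A'(\bar\phi_{h,\tau})^2\,\hbq_{h,\tau}\,\snorm{\nabla\bar\phi_{h,\tau}}^2$, and comparing it with its continuous counterpart forces you to control $\snorm{\nabla\bar\phi_{h,\tau}}^2-\snorm{\nabla\hbphi_{h,\tau}}^2=\nabla(\bar\phi_{h,\tau}+\hbphi_{h,\tau})\cdot\nabla(\bar\phi_{h,\tau}-\hbphi_{h,\tau})$ in $L^{6/5}$ (so that it pairs with $\bar\zeta_{h,\tau}\in H^1\hookrightarrow L^6$). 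The factor $\nabla\bar\phi_{h,\tau}$ belongs to the \emph{discrete} solution and is only known in $L^\infty(L^2)$ from \eqref{eq:full_apriori}; your appeal to $\phi\in H^1(H^3)\hookrightarrow L^\infty(W^{1,\infty})$ bounds the continuous gradient but not this one. Consequently you need $\nabla(\bar\phi_{h,\tau}-\hbphi_{h,\tau})$ in $L^3$, whereas the relative energy only gives $L^2$; integration by parts onto the test function does not help, since the obstruction is the integrability of the discrete error gradient, not second derivatives of the exact solution. The paper closes this gap with the discrete Gagliardo--Nirenberg inequality \eqref{eq:disclapinterpgrad}, $\norm{\nabla v_h}_{0,3}\lesssim\norm{\Delta_h v_h}_0^{1/2}\norm{\nabla v_h}_0^{1/2}+\norm{\nabla v_h}_0$, combined with the identity \eqref{eq:disclaplchemest}, which uses the discrete chemical-potential equations \eqref{eq:pg2}, \eqref{eq:pgp2} to bound $\norm{\Delta_h(\bar\phi_{h,\tau}-\hbphi_{h,\tau})}_0^2$ by the relative energy, a small multiple of the dissipation, and $\norm{\bar r_{2,h,\tau}}_1^2$; the dissipation piece is then absorbed on the left of the stability estimate. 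Without this (or an equivalent) device the trilinear terms in $\bar r_{3,h,\tau}$ cannot be closed.

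A secondary, smaller omission: to reach $O(\tau^4)$ for the squared norms you need \emph{second-order} temporal consistency of the piecewise-constant projection applied to products and compositions, e.g.\ $\norm{\overline{\bar u\bar v}-\overline{uv}}\lesssim\tau^2$ and $\norm{g(\bar\phi)-\overline{g(\phi)}}\lesssim\tau^2$ (Lemmas~\ref{lem:average_time_err} and following, and Lemma~\ref{lem_prodest}). A generic Lipschitz/Taylor estimate of the kind you sketch gives only first order; the superconvergence of the temporal mean is essential and should be invoked explicitly.
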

\noindent 
The detailed proof of this assertion will be given in the appendix.

\subsection*{Discrete error}
By combination of the previous results, we can 
now prove the following bounds for the discrete evolution error.
\begin{lemma}\label{lem:disc_err_pg}
Let the assumptions of Theorem~\ref{thm:full_error_est} hold.
Then
\begin{align*}
\norm{\phi_{h,\tau}-\hat\phi_{h,\tau}}_{L^\infty(H^1)}^2 &+  \norm{q_{h,\tau}-\hat q_{h,\tau}}_{L^\infty(L^2)}^2   \\
&+ \norm{\bmu_{h,\tau}-\hbmu_{h,\tau}}_{L^2(H^1)}^2
+ \norm{\bq_{h,\tau}-\hbq_{h,\tau}}_{L^2(H^1)}^2   \leq C(h^4 + \tau^4)  
\end{align*}
with a constant $C$ that is independent of the discretization parameters $h$ and $\tau$.
\end{lemma}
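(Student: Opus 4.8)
The plan is to combine the discrete stability estimate of Lemma~\ref{lem:fullstab} with the residual bounds of Lemma~\ref{lem:est_res_full} and then close the resulting inequality by a Gronwall-type argument. The left-hand side of the desired estimate is precisely the relative-energy norm controlled in Lemma~\ref{lem:fullstab} (up to the equivalences in~\eqref{eq:lower_bound_rel}), so the strategy is to show that the right-hand side of the stability estimate is bounded by $C(h^4+\tau^4)$ plus a term that can be absorbed.

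\medskip

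First I would apply Lemma~\ref{lem:fullstab} with the particular choice $(\hat\phi_{h,\tau},\hat q_{h,\tau},\hbmu_{h,\tau})$ given by the projections~\eqref{eq:fullproj_1}. This yields, for every $0\le n\le N$,
\begin{align*}
\E_\alpha(\phi_{h,\tau},q_{h,\tau}|\hat\phi_{h,\tau},\hat q_{h,\tau})\big|_{t^n}
&+ c'\int_0^{t^n}\|\bmu_{h,\tau}-\hbmu_{h,\tau}\|_1^2 + \|\bq_{h,\tau}-\hbq_{h,\tau}\|_1^2\,\ddta \\
&\le C_1'\,\E_\alpha(\phi_{h,\tau},q_{h,\tau}|\hat\phi_{h,\tau},\hat q_{h,\tau})\big|_{t=0}
+ C_2'\int_0^{t^n}\mathcal{R}\,\ddta,
\end{align*}
where $\mathcal{R}=\|\bar r_{1,h,\tau}\|_{-1,h}^2+\|\bar r_{2,h,\tau}\|_1^2+\|\bar r_{3,h,\tau}\|_{-1,h}^2$. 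Next I would insert the residual bound of Lemma~\ref{lem:est_res_full} into the last term. The crucial feature is that this bound splits into a clean consistency contribution $C_1'(h^4+\tau^4)$ and a feedback term proportional to $\int_0^{t^n}\E_\alpha(\phi_{h,\tau},q_{h,\tau}|\hat\phi_{h,\tau},\hat q_{h,\tau})\,\ddta$. The latter is exactly of the form handled by the discrete Gronwall lemma.

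\medskip

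The initial relative energy term must be shown to be of the right order. Since $\phi_{h,0}=\pi_h^1\phi(0)$ and $q_{h,0}=\pi_h^0 q(0)$, the two functions agree with their projections at $t=0$, so $\E_\alpha(\phi_{h,\tau},q_{h,\tau}|\hat\phi_{h,\tau},\hat q_{h,\tau})\big|_{t=0}$ measures the projection error at the initial time, which by Lemma~\ref{lem:projerr} (or directly by the approximation properties of $\pi_h^1,\pi_h^0$) is $O(h^4)$; in fact with this choice of initial data the $\phi$-contribution vanishes and the $q$-contribution is controlled by $\|\pi_h^0 q(0)-q(0)\|_0^2\le Ch^4$. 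Collecting the two contributions, the right-hand side becomes $C(h^4+\tau^4)+C\int_0^{t^n}\E_\alpha\,\ddta$, and the discrete Gronwall lemma~\cite{Wloka} absorbs the integral term, yielding a uniform bound on the relative energy and on the dissipative norms by $C(h^4+\tau^4)$. Finally I would convert the relative-energy control into the stated norms using~\eqref{eq:lower_bound_rel}, which gives $\|\phi_{h,\tau}-\hat\phi_{h,\tau}\|_1^2\le C\E_\alpha^\phi$ and $\|q_{h,\tau}-\hat q_{h,\tau}\|_0^2\le C\E_\alpha^q$ pointwise in time, so that taking the supremum over $t^n$ produces the $L^\infty(H^1)$ and $L^\infty(L^2)$ bounds, while the already-present integral terms furnish the $L^2(H^1)$ bounds for the chemical potential and bulk-stress differences.

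\medskip

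The main obstacle I anticipate is not in this assembly step, which is essentially bookkeeping, but in verifying that the feedback term in Lemma~\ref{lem:est_res_full} genuinely enters with the relative energy $\E_\alpha$ and no uncontrolled dissipative quantities. If the residual estimate had instead produced a term like $\delta\int_0^{t^n}\|\bmu_{h,\tau}-\hbmu_{h,\tau}\|_1^2\,\ddta$ with a fixed (non-small) constant, it could not be absorbed by the $c'$-term on the left and the argument would break; the structure of Lemma~\ref{lem:est_res_full}, however, is precisely tailored so that only the absorbable relative-energy term appears, which is why the Gronwall closure succeeds. A secondary point requiring care is that the discrete Gronwall lemma applies to the sequence evaluated at the grid points $t^n$ while the dissipative norms are integrated continuously in time; I would handle this by applying Lemma~\ref{lem:fullstab} at each $t^n$ and noting that the feedback integral is a sum of the per-interval contributions, so that the standard discrete Gronwall inequality with step size $\tau$ (and the mild implicit smallness $C_2'\tau<1$, which holds for $\tau$ small) applies verbatim.
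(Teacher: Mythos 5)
Your proposal is correct and follows essentially the same route as the paper: apply Lemma~\ref{lem:fullstab} with the projections \eqref{eq:fullproj_1}, insert the residual bounds of Lemma~\ref{lem:est_res_full}, absorb the relative-energy feedback term via the discrete Gronwall lemma, and convert back to norms using \eqref{eq:lower_bound_rel}. The only slight imprecision is in your treatment of the initial term: since $q_{h,0}=\pi_h^0 q(0)=\hat q_{h,\tau}(0)$, the $q$-contribution to $\E_\alpha|_{t=0}$ vanishes identically just like the $\phi$-contribution (there is no residual $\|\pi_h^0 q(0)-q(0)\|_0^2$ to bound), which is exactly what the paper uses; your $O(h^4)$ bound is harmless but unnecessary.
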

\begin{proof}
By using Lemma~\ref{lem:fullstab}, the bounds of Lemma~\ref{lem:est_res_full}, and applying the discrete Gronwall lemma, similar to the proof of Lemma~\ref{lem:fullstab}, we obtain
\begin{align*}
&\E_{\alpha}(\phi_{h,\tau},q_{h,\tau}|\hat \phi_{h,\tau},\hat q_{h,\tau})\big|_{t=t^n} + c' \int_0^{t^n} \|\bar \mu_{h,\tau} - \hbmu_{h,\tau}\|_1^2 + \|\bar q_{h,\tau} - \hbq_{h,\tau}\|_1^2 \ddta \\
& \qquad \qquad \qquad 
\le C_1 \E_{\alpha}(\phi_{h,\tau},q_{h,\tau}|\hat \phi_{h,\tau},\hat q_{h,\tau})\big|_{t=0}
+ C(h^4 + \tau^4) \qquad \text{for all } 0 \le n \le N.
\end{align*}
Due to the particular choice of the initial values $\phi_{h,0}$, $q_{h,0}$, the first term on the right-hand side drops out. 
Using \eqref{eq:lower_bound_rel}, the first term on the left-hand side can further be estimated from below by the corresponding norms, which already yields the result. 
\end{proof}

\subsection*{Conclusion}
By combination of the estimates for the projection errors and the discrete evolution error, we can finally obtain the global error estimates of Theorem~\ref{thm:full_error_est}. 

\section{Uniqueness of discrete solutions}
\label{sec:uniqueness}

We will now use the discrete stability results of Lemma~\ref{lem:fullstab} to show that, under a mild restriction on the time step $\tau$, we can also expect the uniqueness of the discrete solution. 
\begin{lemma} \label{lem:res_unique}
Let $(\phi_{h,\tau},\bar \mu_{h,\tau},q_{h,\tau})$ and $(\hat\phi_{h,\tau},\hbmu_{h,\tau},\hat q_{h,\tau})$ be two solutions of Problem~\ref{prob:full} with the same initial data. Then 
$(\hat\phi_{h,\tau},\hbmu_{h,\tau},\hat q_{h,\tau})$ satisfies \eqref{eq:pgp1}--\eqref{eq:pgp3} with residuals $\bar r_{2,h,\tau}=0$ and
\begin{align*}
\la \bar r_{1,h,\tau},\bar \psi_{h,\tau}\ran 
&= \la b(\bar\phi_{h,\tau}) \nabla \hbmu_{h,\tau}-c(\bar \phi_{h,\tau}) \nabla (A(\bar \phi_{h,\tau}) \hbq_{h,\tau}),\na\bar\psi_{h,\tau}\ran \\
& \qquad -\la b(\hbphi_{h,\tau}) \nabla \hbmu_{h,\tau}-c(\hbphi_{h,\tau}) \nabla (A(\hbphi_{h,\tau}) \hbq_{h,\tau}),\na\bar\psi_{h,\tau}\ran \\
\la \bar r_{3,h,\tau},\bar \psi_{h,\tau}\ran 
&= \la d_0 \nabla (A(\bar \phi_{h,\tau}) \hbq_{h,\tau}) - c(\bar \phi_{h,\tau}) \nabla \hbmu_{h,\tau}, \nabla (A(\bar \phi_{h,\tau}) \bar \zeta_{h,\tau})\ran\\
& \qquad - \la d_0 \nabla (A(\hbphi_{h,\tau}) \hbq_{h,\tau}) - c(\hbphi_{h,\tau}) \nabla \hbmu_{h,\tau}, \nabla (A(\hbphi_{h,\tau}) \bar \zeta_{h,\tau})\ran \\
& \qquad + \la \kappa(\bar \phi_{h,\tau}) \hbq_{h,\tau}, \bar \zeta_{h,\tau} \ran - \la \kappa(\hbphi_{h,\tau})  \hbq_{h,\tau}, \bar \zeta_{h,\tau}\ran.
\end{align*}
\end{lemma}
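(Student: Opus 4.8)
The plan is to verify the two residual identities by direct substitution, exploiting the fact that both $(\phi_{h,\tau},\bar\mu_{h,\tau},q_{h,\tau})$ and $(\hat\phi_{h,\tau},\hbmu_{h,\tau},\hat q_{h,\tau})$ are exact solutions of Problem~\ref{prob:full}. The key observation is that the residuals $\bar r_{i,h,\tau}$ are \emph{defined} through equations \eqref{eq:pgp1}--\eqref{eq:pgp3}: by construction, $\la \bar r_{i,h,\tau},\cdot\ran$ equals the left-hand side of \eqref{eq:pgp1}--\eqref{eq:pgp3} evaluated at the hatted functions. Since the hatted triple is itself a genuine solution, it satisfies the \emph{original} equations \eqref{eq:pg1}--\eqref{eq:pg3} (with hats on every occurrence of the solution components). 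The residual is therefore the difference between \eqref{eq:pgp1}--\eqref{eq:pgp3}, in which the coefficient functions $b,c,A,\kappa$ are evaluated at the \emph{unhatted} $\bar\phi_{h,\tau}$, and the genuine hatted equations, in which these coefficients are evaluated at $\hbphi_{h,\tau}$.

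First I would write out \eqref{eq:pgp2} for the hatted functions and subtract the genuine second equation satisfied by $(\hat\phi_{h,\tau},\hbmu_{h,\tau},\hat q_{h,\tau})$. Because equation \eqref{eq:pg2} contains the coefficient-free terms $\gamma\la\nabla\bar\phi,\nabla\bar\xi\ran$ and $\la f'(\phi_{h,\tau}),\bar\xi\ran$ plus $\la\bar\mu,\bar\xi\ran$, and all three of these depend only on the solution component being differentiated (not on coefficient evaluations), the subtraction cancels exactly, giving $\bar r_{2,h,\tau}=0$. This is the cleanest of the three identities and I would present it first to fix the mechanism.

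Next I would treat \eqref{eq:pgp1} and \eqref{eq:pgp3}, where the cancellation is only partial. Here the time-derivative terms $\la\dt\hat\phi_{h,\tau},\bar\psi\ran$ and $\la\dt\hat q_{h,\tau},\bar\zeta\ran$ cancel against their counterparts in the genuine hatted equations, as do any terms not involving coefficient functions. What survives is precisely the difference between the coefficient-dependent flux terms evaluated at $\bar\phi_{h,\tau}$ versus at $\hbphi_{h,\tau}$. Matching term by term against the stated formulas for $\bar r_{1,h,\tau}$ and $\bar r_{3,h,\tau}$ confirms the claim; for $\bar r_{3,h,\tau}$ one must additionally note that the $\varepsilon\la\nabla\hbq_{h,\tau},\nabla\bar\zeta\ran$ diffusion term is coefficient-free and hence cancels, which is why it does not appear in the residual, while the relaxation term $\kappa(\cdot)\hbq_{h,\tau}$ contributes the displayed $\kappa(\bar\phi_{h,\tau})-\kappa(\hbphi_{h,\tau})$ difference.

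The main obstacle, though modest, is bookkeeping: one must be careful that in the genuine equations satisfied by the hatted solution, \emph{every} coefficient argument is $\hbphi_{h,\tau}$ and every solution component carries a hat, whereas in \eqref{eq:pgp1}--\eqref{eq:pgp3} the coefficient arguments use the unhatted $\bar\phi_{h,\tau}$ while the flux quantities $\hbmu_{h,\tau}$, $\hbq_{h,\tau}$ remain hatted. Keeping these two substitution patterns distinct is the only place an error could creep in. No analytic estimates are needed at this stage; the lemma is a purely algebraic identification, which is exactly what the one-line proof (``some elementary manipulations'') asserts. I would close by remarking that the structure here sets up the uniqueness argument: the residuals depend on the solution difference only through Lipschitz-continuous coefficient increments $b(\bar\phi)-b(\hbphi)$, etc., which will later be bounded by $\|\phi_{h,\tau}-\hat\phi_{h,\tau}\|$ so that Lemma~\ref{lem:fullstab} can absorb them under the mesh restriction $h=c'\tau$.
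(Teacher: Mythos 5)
Your argument is correct and is precisely the paper's (essentially unwritten) proof: the residuals are \emph{defined} by \eqref{eq:pgp1}--\eqref{eq:pgp3}, and subtracting the genuine equations satisfied by the second solution cancels the time-derivative and $\varepsilon$-diffusion terms and all coefficient-free terms (whence $\bar r_{2,h,\tau}=0$), leaving exactly the displayed differences of coefficient evaluations at $\bar\phi_{h,\tau}$ versus $\hbphi_{h,\tau}$. The paper dismisses this as following ``immediately from the definition of the discrete solutions,'' so your write-up merely supplies the bookkeeping it omits.
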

The assertions again follow immediately from the definition of the discrete solutions. 
Similar as above, we state appropriate bounds for the residuals in the relevant norms.
\begin{lemma} \label{lem:unique_res}
Let (A0)--(A7) hold and $(\phi_{h,\tau},\mu_{h,\tau},q_{h,\tau})$ respectively $(\hat \phi_{h,\tau}, \hbmu_{h,\tau},\hat q_{h,\tau})$ denote two solutions of Problem~\ref{prob:full}. Then the residuals of Lemma~\ref{lem:res_unique} can be estimated by
\begin{align*}
\int_{I_n} \norm{\bar r_{1,h,\tau}}_{-1,h}^2 + \norm{\bar r_{3,h,\tau}}_{-1,h}^2 \ddta 
\leq  C'' \int_{I_n}\E_\alpha(\phi_{h,\tau},q_{h,\tau}|\hat \phi_{h,\tau},\hat q_{h,\tau}) \ddta. 
\end{align*}
The constant $C''$ in this estimate depends on bounds of the model parameters, the initial data, and additionally on bounds of $\norm{\hbmu_{h,\tau}}_{L^\infty(W^{1,3})}$, $\norm{\hbq_{h,\tau}}_{L^\infty(W^{1,3})}$, $\norm{\hbq_{h,\tau}}_{L^\infty(L^\infty)}$, and $\norm{\hbphi_{h,\tau}}_{L^\infty(W^{1,3})}$.
\end{lemma}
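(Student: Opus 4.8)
The plan is to estimate each residual term in Lemma~\ref{lem:res_unique} using the fact that both quantities involved differ only through the argument of the nonlinear coefficients $b$, $c$, $A$, and $\kappa$: in each residual the factors $\nabla\hbmu_{h,\tau}$, $\hbq_{h,\tau}$, and their gradients are \emph{identical}, and only the coefficients are evaluated at $\bar\phi_{h,\tau}$ versus $\hbphi_{h,\tau}$. Thus the whole residual is controlled by the difference $\bar\phi_{h,\tau} - \hbphi_{h,\tau}$, which is exactly the quantity the relative energy $\E_\alpha^\phi(\phi_{h,\tau}|\hat\phi_{h,\tau})$ bounds from below via \eqref{eq:lower_bound_rel}. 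The first step is therefore to expand each coefficient difference, e.g. $b(\bar\phi_{h,\tau}) - b(\hbphi_{h,\tau})$, using the mean value theorem together with the Lipschitz bounds $\norm{b'}_{0,\infty}\le b_3$, $\norm{c'}_{0,\infty}\le c_3$, $\norm{A'}_{0,\infty}\le A_3$, $\norm{\kappa'}_{0,\infty}\le\kappa_3$ from (A1), (A3), (A4), so that every such difference is pointwise bounded by a constant times $|\bar\phi_{h,\tau}-\hbphi_{h,\tau}|$.

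Next I would estimate $\norm{\bar r_{1,h,\tau}}_{-1,h}$ by testing against an arbitrary $\bar\psi_{h,\tau}\in\Vh$ with $\norm{\bar\psi_{h,\tau}}_1=1$ and bounding the resulting space-time integral by H\"older's inequality. The dangerous factors $\nabla\hbmu_{h,\tau}$ and $\nabla(A(\hbphi_{h,\tau})\hbq_{h,\tau})$ are placed in $L^3$ and absorbed by the assumed bounds $\norm{\hbmu_{h,\tau}}_{L^\infty(W^{1,3})}$, $\norm{\hbq_{h,\tau}}_{L^\infty(W^{1,3})}$, $\norm{\hbq_{h,\tau}}_{L^\infty(L^\infty)}$, and $\norm{\hbphi_{h,\tau}}_{L^\infty(W^{1,3})}$ listed in the lemma, while $\nabla\bar\psi_{h,\tau}$ is in $L^2$; the coefficient difference, being the $L^6$-type factor, then supplies a $\norm{\bar\phi_{h,\tau}-\hbphi_{h,\tau}}_{0,6}$ which the Sobolev embedding $H^1(\Omega)\hookrightarrow L^6(\Omega)$ (valid for $d\le 3$) turns into $\norm{\bar\phi_{h,\tau}-\hbphi_{h,\tau}}_1$. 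When the product rule produces a term with $\nabla(\bar\phi_{h,\tau}-\hbphi_{h,\tau})$ paired with a coefficient derivative times $\hbq_{h,\tau}$, the uniform bound on $\hbq_{h,\tau}$ keeps that factor in $L^\infty$ and again leaves a clean $\norm{\bar\phi_{h,\tau}-\hbphi_{h,\tau}}_1$. The treatment of $\bar r_{3,h,\tau}$ is analogous: the relaxation term $\la(\kappa(\bar\phi_{h,\tau})-\kappa(\hbphi_{h,\tau}))\hbq_{h,\tau},\bar\zeta_{h,\tau}\ran$ is immediate, and the transport-type bracket is expanded by the product rule so that each summand contains exactly one coefficient difference or one difference of $A$-gradients, distributed in $L^6$, against the uniformly bounded data and the $H^1$ test function.

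The remaining step is to integrate in time over $I_n$ and invoke \eqref{eq:lower_bound_rel} to replace $\norm{\bar\phi_{h,\tau}-\hbphi_{h,\tau}}_1^2$ by $C\,\E_\alpha^\phi(\phi_{h,\tau}|\hat\phi_{h,\tau})\le C\,\E_\alpha(\phi_{h,\tau},q_{h,\tau}|\hat\phi_{h,\tau},\hat q_{h,\tau})$, collecting the constant $C''$ out of the model parameters and the enumerated $L^\infty(W^{1,3})$ and $L^\infty(L^\infty)$ bounds. I expect the main obstacle to be the careful bookkeeping of the product-rule expansion of $\nabla(A(\bar\phi_{h,\tau})\hbq_{h,\tau}) - \nabla(A(\hbphi_{h,\tau})\hbq_{h,\tau})$, since it splits into one part with the coefficient difference $A(\bar\phi_{h,\tau})-A(\hbphi_{h,\tau})$ multiplying $\nabla\hbq_{h,\tau}$ and another with $(A'(\bar\phi_{h,\tau})\nabla\bar\phi_{h,\tau} - A'(\hbphi_{h,\tau})\nabla\hbphi_{h,\tau})\hbq_{h,\tau}$; the latter must itself be further split, using $\norm{A''}_{0,\infty}\le A_4$ from (A3), into a term controlled by $\norm{\bar\phi_{h,\tau}-\hbphi_{h,\tau}}_{0,6}\norm{\nabla\hbphi_{h,\tau}}_{0,3}$ and a term controlled by $\norm{\nabla(\bar\phi_{h,\tau}-\hbphi_{h,\tau})}_0$, each then closed by the $L^\infty(W^{1,3})$ and $L^\infty(L^\infty)$ bounds. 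Ensuring that every factor lands in an exponent triple summing the H\"older reciprocals to one, while keeping the difference term always in an $H^1$-controllable norm, is the delicate part; once this is organized the estimate follows routinely.
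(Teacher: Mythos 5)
Your treatment of the first residual and of most terms in the third matches the paper's: Lipschitz bounds on the coefficients, H\"older with the fixed factors $\nabla\hbmu_{h,\tau}$, $\hbq_{h,\tau}$, $\nabla\hbq_{h,\tau}$, $\nabla\hbphi_{h,\tau}$ placed in $L^3$ or $L^\infty$ (whence the list of norms in the constant), the difference $\bar\phi_{h,\tau}-\hbphi_{h,\tau}$ in $L^6\hookleftarrow H^1$, and then \eqref{eq:lower_bound_rel}. However, there is a genuine gap in the third residual, precisely at the point you flag as ``delicate'': it is \emph{not} possible to keep every difference factor in an $H^1$-controllable norm. Expanding $\nabla(A(\bar\phi_{h,\tau})\bar\zeta_{h,\tau})$ against $\nabla(A(\bar\phi_{h,\tau})\hbq_{h,\tau})$ produces the term $(A'(\bar\phi_{h,\tau}))^2\hbq_{h,\tau}|\nabla\bar\phi_{h,\tau}|^2$ paired with $\bar\zeta_{h,\tau}\in L^6$, and its difference contains $|\nabla\bar\phi_{h,\tau}|^2-|\nabla\hbphi_{h,\tau}|^2=\nabla(\bar\phi_{h,\tau}+\hbphi_{h,\tau})\cdot\nabla(\bar\phi_{h,\tau}-\hbphi_{h,\tau})$, which must land in $L^{6/5}$. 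Since $\bar\phi_{h,\tau}$ is an \emph{arbitrary} second discrete solution, the only available bound for $\nabla(\bar\phi_{h,\tau}+\hbphi_{h,\tau})$ is the $L^\infty(H^1)$ a-priori estimate of Theorem~\ref{thm:ex}, i.e.\ $L^2$ in space; the H\"older split $\tfrac12+\tfrac13+\tfrac16=1$ then forces the difference into $\norm{\nabla(\bar\phi_{h,\tau}-\hbphi_{h,\tau})}_{0,3}$, which the relative energy does \emph{not} control.

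The paper closes this gap with machinery absent from your proposal: the discrete interpolation inequality \eqref{eq:disclapinterpgrad}, $\norm{\nabla v_h}_{0,3}\le C\norm{\Delta_h v_h}_0^{1/2}\norm{\nabla v_h}_0^{1/2}+C\norm{\nabla v_h}_0$, together with the bound \eqref{eq:disclaplchemest} for $\norm{\Delta_h(\bar\phi_{h,\tau}-\hbphi_{h,\tau})}_0$ obtained from the chemical-potential equation, which produces a term $\delta\,\D_{\bar\phi_{h,\tau}}(\bmu_{h,\tau}-\hbmu_{h,\tau})$ that is subsequently absorbed into the dissipation on the left-hand side of the stability estimate (so the clean bound in the lemma statement is really proved modulo this absorption). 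Without this step your argument cannot close, since putting $\nabla(\bar\phi_{h,\tau}-\hbphi_{h,\tau})$ in $L^2$ would require $\nabla\bar\phi_{h,\tau}\in L^3$, a bound you are not entitled to for the second solution. The rest of your outline is sound.
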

\noindent 
The detailed proof of these estimates is provided in the appendix.

\subsection*{Uniqueness}

In order to proceed, we need to estimate the norms of the discrete solution mentioned at the end of Lemma~\ref{lem:unique_res} uniformly in the discretization parameters.  
To this end, we use the following arguments: Let $\bar g_{h,\tau}$ stand for $\bar \mu_{h,\tau}$ or $\bar q_{h,\tau}$. Then 
\begin{equation*}
   \norm{\bar g_{h,\tau}}_{L^\infty(W^{1,3})} \leq \norm{\bar g_{h,\tau} -  \pi_h^0 \bar g}_{L^\infty(W^{1,3})} + \norm{\pi_h \bar g - \bar g}_{L^\infty(W^{1,3})} + \norm{\bar g}_{L^\infty(W^{1,3})},
\end{equation*}
where $\pi_h^0$ is the $L^2$-projection on the space $\Vh$. The second and third terms can be uniformly bounded by projection error estimates assuming sufficient spatial regularity of the function $\bar g$; see Lemma~\ref{lem:projerr}. 
To bound the first term, we use the inverse inequality~\eqref{eq:inverse} with $p=3,q=2,d\le3$ in space and with $p=\infty,q=2,d=1$ in time, 
and the estimates of Lemma~\ref{lem:disc_err_pg}. This leads to
\begin{equation*}
\norm{\bar g_{h,\tau} - \pi_h^0 \bar g}_{L^\infty(W^{1,3})} \leq Ch^{-1/2}\tau^{-1/2}(h^4 + \tau^4).    
\end{equation*}
Similar estimates can be done for the $L^\infty(L^\infty)$ norm of $\hbq_{h,\tau}$.
For $\tau=ch$, the constant $C''$ in Lemma~\ref{lem:unique_res} can thus be bounded uniformly in $h$ and $\tau$. 
We can then apply the discrete Gronwall lemma, similarly as in the proof of Lemma~\ref{lem:fullstab}, to obtain
\begin{align*}
\E_{\alpha}(\phi_{h,\tau},q_{h,\tau}|\hat \phi_{h,\tau},\hat q_{h,\tau}) \big|_{t=t^n} &+ c''\int_0^{t^n} \|\bmu_{h,\tau}-\hbmu_{h,\tau}\|_1^2 + \|\bar q_{h,\tau}-\hbq_{h,\tau}\|_1^2   \ddta 
\le 0. 
\end{align*}
In the last step, we used that both functions $(\phi_{h,\tau},q_{h,\tau})$ as well as $(\hat\phi_{h,\tau},\hat q_{h,\tau})$ have the same initial values. 
Together with the lower bounds for the relative energy, we find that the difference of the two solutions vanishes.

\section{Numerical illustration}
\label{sec:num}

We complement the theoretical results by two computational tests. 
We consider the domain $\Omega=(0,1)^2$, which can be extended periodically to the whole of $\R^2$. Functions on $\Omega$ are assumed periodically extendable under preservation of class.

\subsection{Convergence rates}
We start with evaluating the convergence rate of the proposed method. 
We choose smooth initial data 
\begin{align*}
\phi_0 &= 0.25\cos(2\pi x)\cos(2\pi y) + 0.5, \qquad q_0 = 0.01\sin(2\pi x)\sin(2\pi y).
\end{align*}
On the time interval $[0,T], T=10$ the viscoelastic phase separation system \eqref{eq:chq1}--\eqref{eq:chq3} is expected to possesses a smooth exact solution.
The parameters are set to $\gamma=\varepsilon=10^{-3}$.
For the nonlinear functions, we chose $b(\phi) = c(\phi)^2 + \eps$, $c(\phi)=\frac{4}{\sqrt{10}}\cdot\phi (1-\phi)$, $d_0=1$,  $f(\phi)=16(\phi-0.95)^2(\phi-0.05)^2$, $\kappa(\phi) = 10^{-2}(10\phi^2 + 10^{-4})^{-1}$, and 
\begin{equation*}
A= 5 \cdot 10^{-3} \cdot \big[1+ \tanh(5[\cot(\pi\phi^*)-\cot(\pi\phi)])\big]
\end{equation*}
with $\phi^*=0.5=\la \phi_0, 1\ra$ denoting the total mass. 
Apart from the specific choice of the initial data, the problem setting is similar to that of \cite{Strasser.2019}. 
The discretization errors are estimated by comparing the computed solutions on two consecutively refined grids. 
The error quantities which we report in the following are defined as 
\begin{align*}
e_{h,\tau} &= \norm*{\phi_{h,\tau} - \phi_{h/2,\tau/2}}_{L^\infty(H^1)}^2 + \norm*{q_{h,\tau} - q_{h/2,\tau/2}}_{L^\infty(L^2)}^2 \\
&\qquad\quad+ \norm*{\bmu_{h,\tau} - \bmu_{h/2,\tau/2}}_{L^2(H^1)}^2 + \norm*{\bq_{h,\tau} - \bq_{h/2,\tau/2}}_{L^2(H^1)}^2. 
\end{align*}
These are the natural norms arising in the stability and error analysis of the problem.
We also present the individual error components, which are denoted by $e_{\phi,h,\tau},e_{q,h,\tau},e_{\bmu,h,\tau},e_{\bq,h,\tau}$.
In the Tables~\ref{tab:rates_time_chq1}--\ref{tab:rates_time_chq2}, we display the results of our computations obtained on a sequence of uniformly refined meshes with mesh size $h_k=2^{-(1+k)}$, $k=0,\ldots,6$, and time steps $\tau_k =  h_k$. 
\begin{table}[htbp!]
\centering
\small
\begin{tabular}{c||c|c|c|c|c|c}
$ k $ & $ e_{h,\tau} $  &  eoc & $ e_{\phi,h,\tau} $  &  eoc & $ e_{q,h,\tau} $  &  eoc    \\
\hline
$ 0 $   & $9.84 \cdot 10^{-1}$  & ---  & $8.78 \cdot 10^{-1}$ & --- &  $1.18 \cdot 10^{-10}$ & ---  \\
$ 1 $   & $6.55 \cdot 10^{-2}$  & 3.03 & $5.95 \cdot 10^{-2}$ & 3.88 & $1.39 \cdot 10^{-11}$ & 3.10 \\
$ 2 $   & $8.02 \cdot 10^{-3}$  & 3.03 & $7.64 \cdot 10^{-3}$ & 2.96 & $1.28 \cdot 10^{-12}$ & 3.62 \\
$ 3 $   & $5.32 \cdot 10^{-4}$  & 3.91 & $5.08 \cdot 10^{-4}$ & 3.91 & $2.47 \cdot 10^{-14}$ & 5.51 \\
$ 4 $   & $3.37 \cdot 10^{-5}$  & 3.98 & $3.22 \cdot 10^{-5}$ & 3.98 & $3.10 \cdot 10^{-16}$ & 6.32 
\end{tabular}
\bigskip 
\caption{Errors and experimental orders of convergence for smooth solution: Part I. \label{tab:rates_time_chq1}} 
\end{table}
\begin{table}[htbp!]
\centering
\small
\begin{tabular}{c||c|c|c|c}
$ k $ &  $ e_{\bmu,h,\tau} $  &  eoc & $ e_{\bq,h,\tau} $  &  eoc   \\
\hline
$ 0 $   & $1.04 \cdot 10^{-1}$ & ---  & $1.17 \cdot 10^{-6}$  & ---\\
$ 1 $   & $5.93 \cdot 10^{-3}$ & 4.14 & $4.04 \cdot 10^{-7}$  & 1.54\\
$ 2 $   & $3.78 \cdot 10^{-4}$ & 3.97 & $1.10 \cdot 10^{-7}$  & 1.87\\
$ 3 $   & $2.41 \cdot 10^{-5}$ & 3.97 & $1.13 \cdot 10^{-8}$  & 3.28\\
$ 4 $   & $1.52 \cdot 10^{-6}$ & 3.99 & $5.82 \cdot 10^{-10}$ & 4.28   
\end{tabular}
\bigskip 
\caption{Errors and experimental orders of convergence for smooth solution: Part II. \label{tab:rates_time_chq2}} 
\end{table}

As predicted by Theorem~\ref{thm:full_error_est}, we observe convergence of at least fourth order for the squared norms of the errors in all solution components. 
Let us finally mention that, as predicted, the discrete identities for the conservation of mass and energy dissipation are valid in our computations up to round-off errors.

\subsection{Qualitative behavior}\label{exp:visco}
This experiment illustrates typical features associated with viscoelastic phase separation. 
Similarly to~\cite{Strasser.2019,Diss},
we choose the initial data as
\begin{align*}
   \phi_0 = 0.4 + \xi(x,y), \qquad   q_0 = 0
\end{align*}
 with $\xi(x,y)$ a uniform random perturbation of small amplitude, i.e. $\xi(x,y)\in[-0.0025,0.0025]$. 
The model parameters are set to $\gamma=\varepsilon=10^{-3}$ and $T=12$. 
For the nonlinear functions, we chose $b(\phi) = c(\phi)^2 + \eps$, $c(\phi)=\frac{1}{\sqrt{10}} \phi(1-\phi)$, $d_0=1$, $f(\phi)=(\phi-0.95)^2(\phi-0.05)^2$, $\kappa(\phi) = 10^{-3}(10\phi^2 + 10^{-4})^{-1}$, and 
\begin{equation*}
A= \frac{1}{2}\big[1 + \tanh(10[\cot(\pi\phi^*)-\cot(\pi\phi)])\big]
\end{equation*}   
with $\phi^*=\la \phi_0,1 \ra$ again denoting the total mass.
\begin{figure}[htbp!]
\centering
\footnotesize
\begin{tabular}{ccc}
    \includegraphics[trim={3cm 1.4cm 1.1cm 0.5cm},clip,scale=0.29]{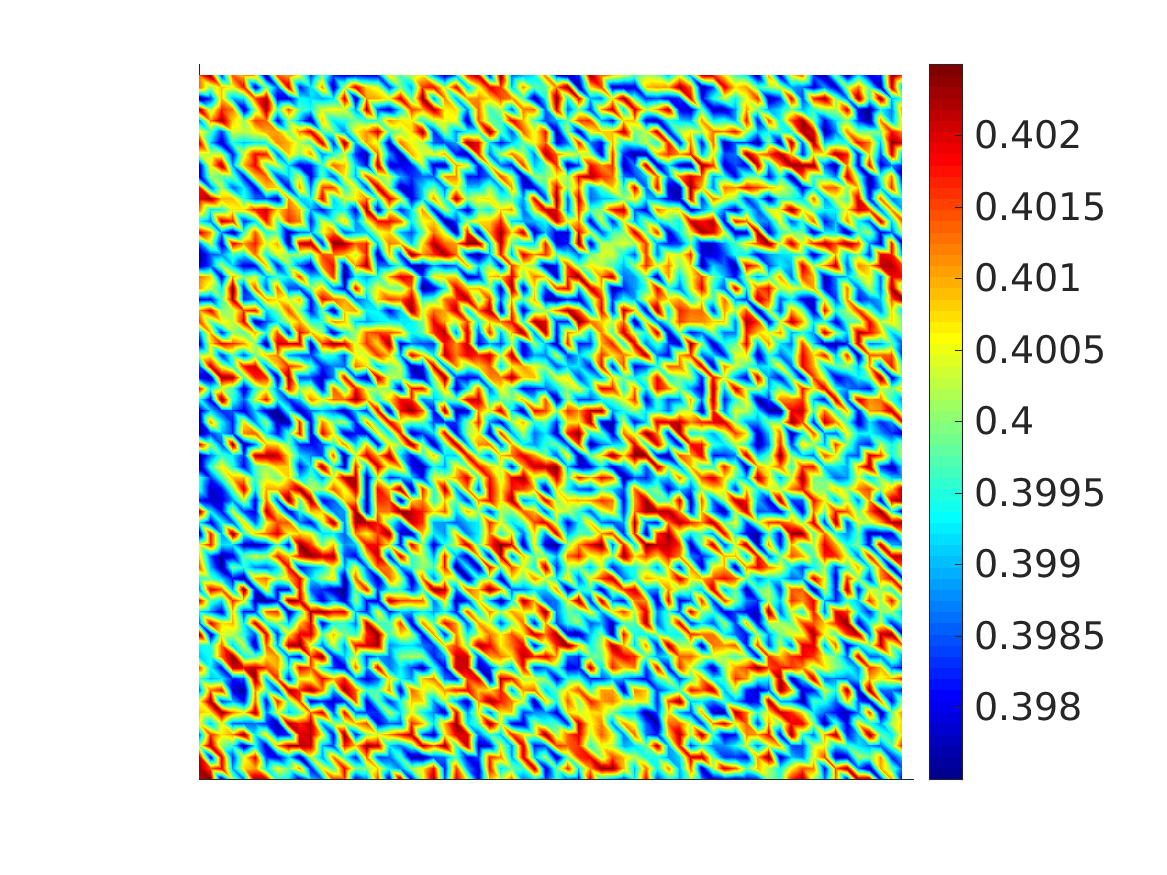} 
    &
    \includegraphics[trim={3.3cm 1.4cm 1.1cm 0.5cm},clip,scale=0.29]{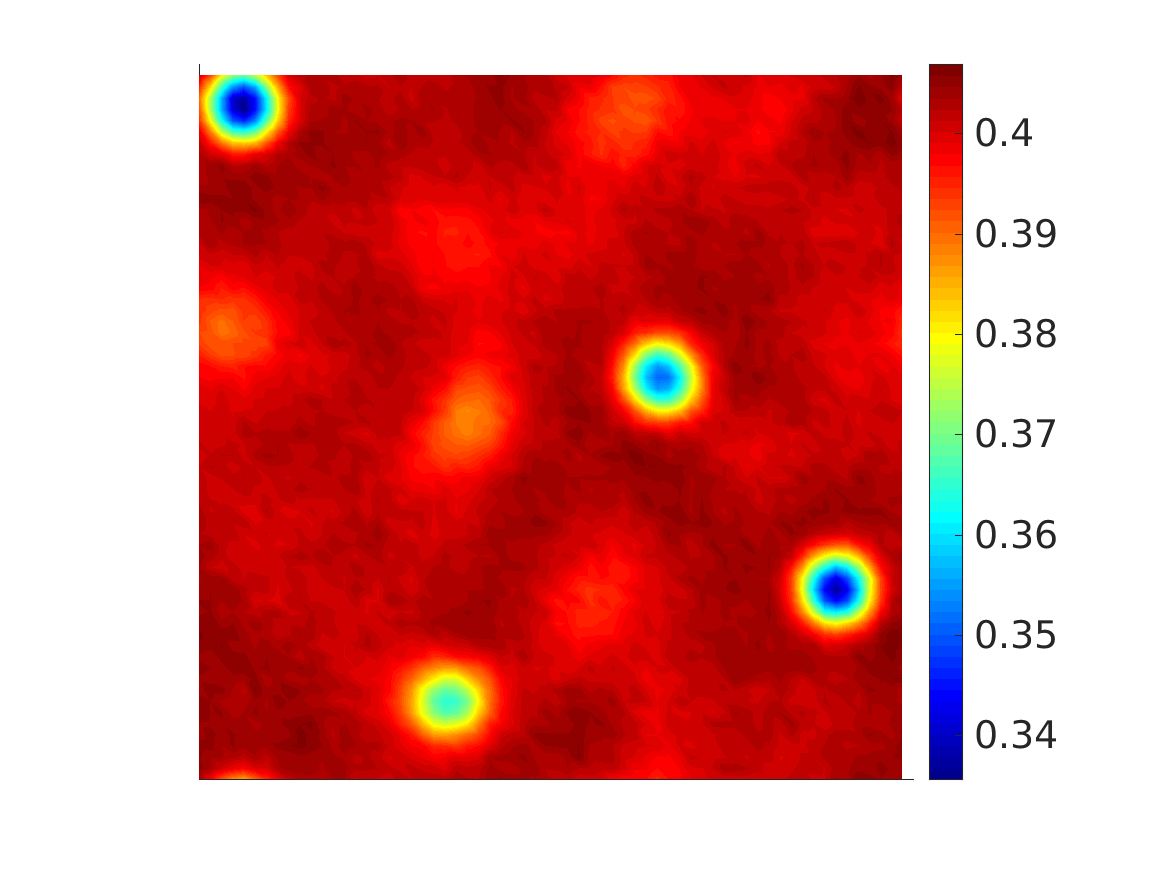}  
    &
    \includegraphics[trim={3.3cm 1.4cm 1.1cm 0.5cm},clip,scale=0.29]{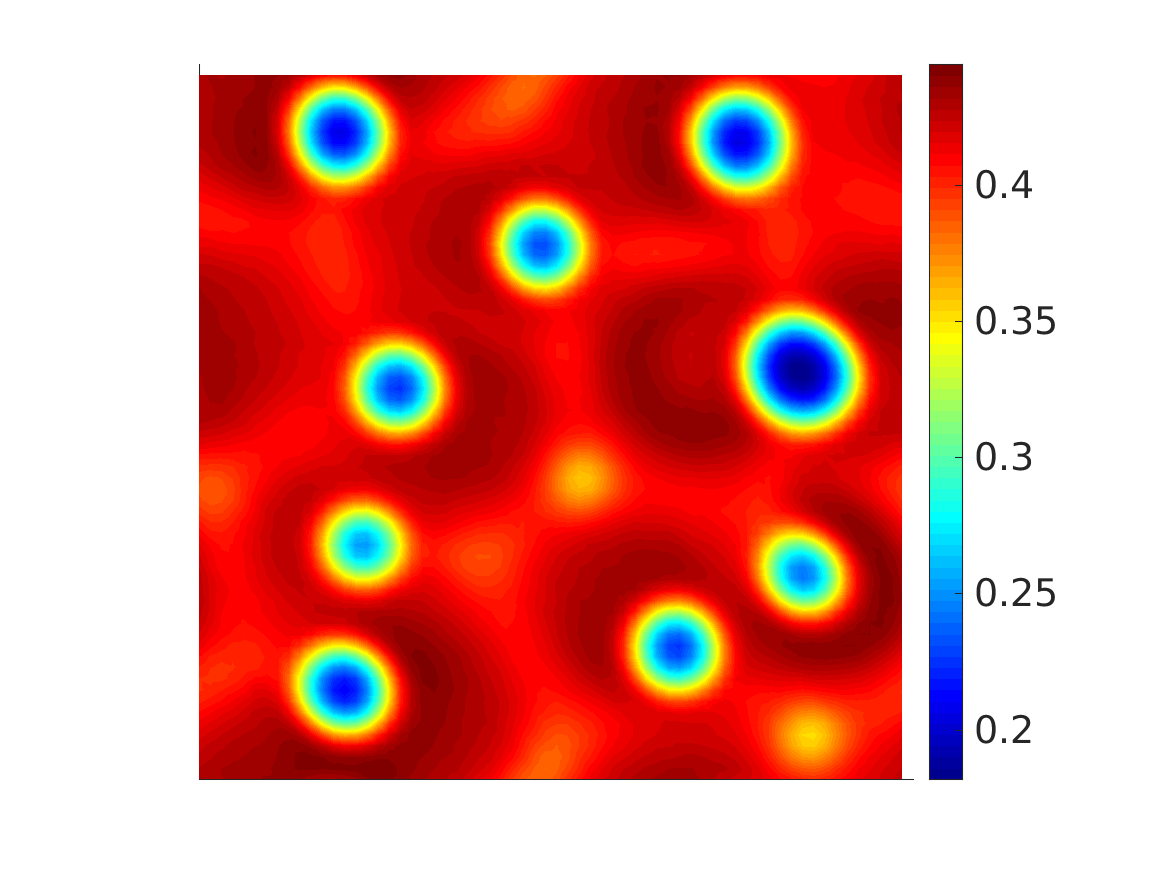}  \\
    \hspace{-1cm}t=0 & \hspace{-1cm}t=0.6 & \hspace{-1cm}t=1.7 \\
    \includegraphics[trim={3cm 1.4cm 1.1cm 0.5cm},clip,scale=0.29]{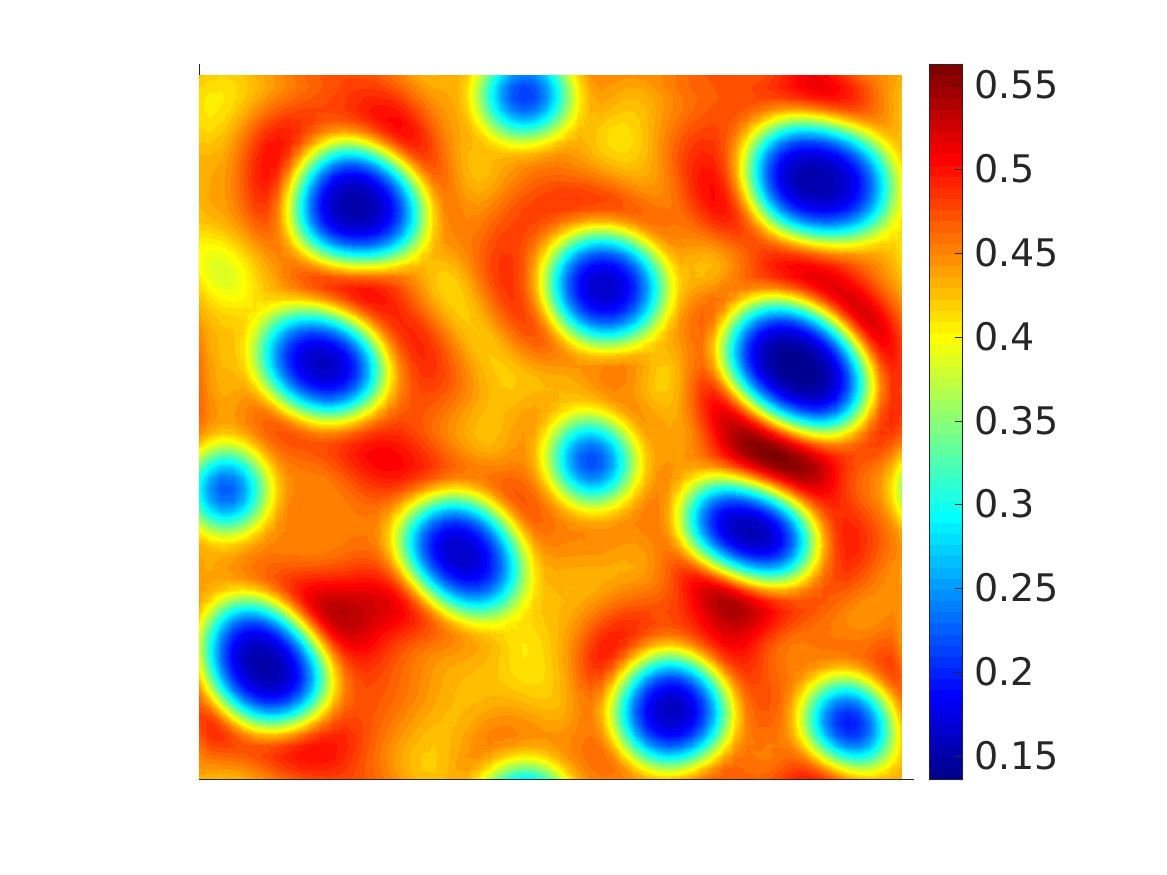}  
    &
    \includegraphics[trim={3.3cm 1.4cm 1.1cm 0.5cm},clip,scale=0.29]{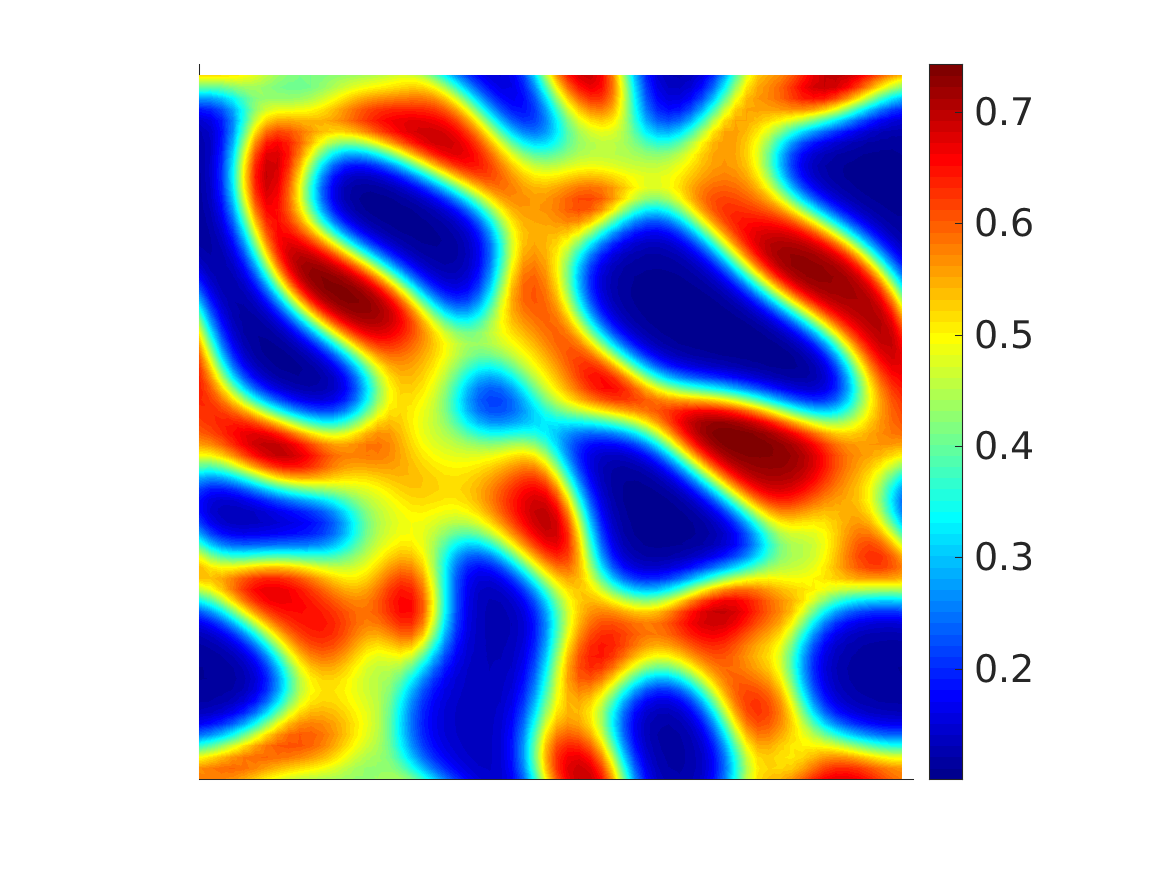} 
    &
    \includegraphics[trim={3.3cm 1.4cm 1.1cm 0.5cm},clip,scale=0.29]{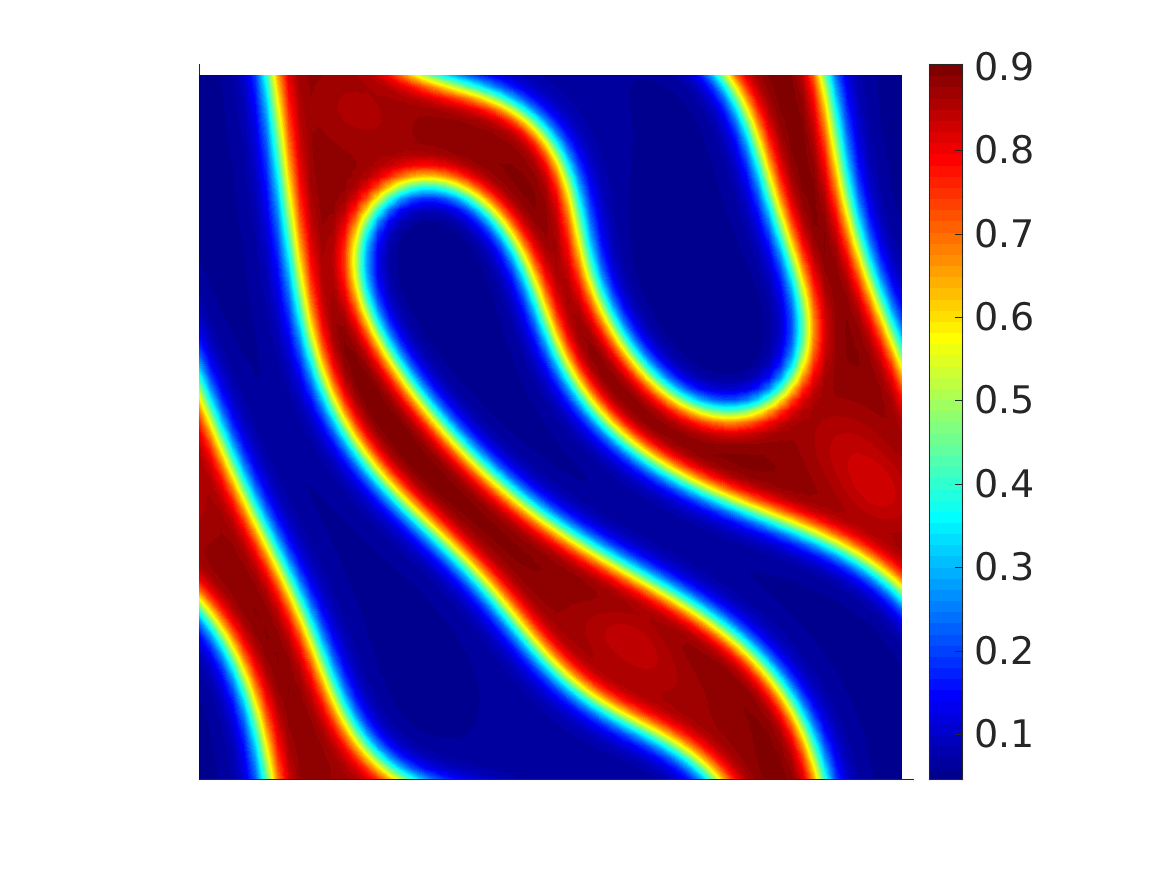} \\[-0.5em]
    \hspace{-1cm}t=3.2 & \hspace{-1cm}t=5.3 & \hspace{-1cm}t=12 \\
\end{tabular}
    \caption{Snapshots of the volume fraction $\phi$ obtained for the second test case, illustrating typical stages of viscoelastic phase separation.\label{fig:evovar}}
\end{figure}
%
In contrast to more standard systems, the phase separation here takes place in several stages~\cite{Tanaka2017}. 
First, the solvent moves out of the polymer forming small droplets which grow over time. 
In the intermediate stage, the polymer starts to form a network-like structure, which finally collapses into the separate phases. 
Due to the small mobility of the polymers, the overall phase separation process is much slower than in symmetric binary fluid systems.
The observed behaviour is typical for the demixing of systems with dynamic asymmetry and is in good agreement with the results presented in~\cite{Tanaka,ZZE}.

\section{Conclusion \& Outlook}
In this work, we proposed and analyzed a fully discrete numerical scheme for a model of viscoelastic phase separation. 
The proposed method is based on variational discretization strategies in space and time, which allows preservation of important structural properties of the system, such as conservation of mass and dissipation of energy, exactly on the discrete level. 
A nonlinear stability analysis for the discrete problem was presented based on relative energies as distance measures. This allowed to establish order optimal convergence rates in space and time under minimal smoothness assumptions, despite the presence of various strong nonlinearities. The discrete stability estimates further allowed to establish uniqueness of the discrete solution under a mild restriction on the time step size. 

The general methodology underlying the proposed numerical method and its analysis can, in principle, be extended to a variety of related nonlinear evolution problems. First results in these directions can be found in \cite{Diss}. The rigorous error analysis in such general cases and further steps towards the efficient solution of the nonlinear systems to be solved in every time step are topics of ongoing research.

{\footnotesize

\section*{Acknowledgement}
A.B. and M.L gratufully acknowledge the support by the German Science Foundation (DFG) via TRR~146 (project~C3) and by the Mainz Institute for Multiscale Modelling. M.L.\ is grateful to the Gutenberg Research College, University Mainz for supporting her research.}

\appendix

\section*{Appendix}

\setcounter{equation}{0}
 \renewcommand{\thesection}{A}
 \renewcommand{\theequation}{\thesection.\arabic{equation}}

For completeness of the presentation, we now provide detailed proofs for some of the technical results that were used in the error analysis of the previous sections.
The appendix is divided into two sections. In Appendix A, we present the projection errors anticipated in the forthcoming estimates. These encompass familiar linear and nonlinear projection errors, along with specific estimates tailored to the errors arising in the analysis.

Appendix B is dedicated primarily to proving Lemma \ref{lem:est_res_full} and Lemma \ref{lem:res_unique}. Within this section, our approach focuses on appropriately estimating the residuals through the relative energy, dissipation, and projection errors. This part is notably technical due to the numerous nonlinearities involved.

\section{Projection error estimates} 
\label{app:proj}

In the following, we summarize some well-known results about standard projection and interpolation operators, which are used in our analysis. 

\subsection{Space discretization}
We consider the setting of Sections~\ref{sec:prelim} and \ref{sec:main} and, in particular, assume (A6)--(A7) to hold true.
The following results then follow with standard arguments; see e.g. \cite{BrennerScott}.
The $L^2$-orthogonal projection $\pi_h^0 : L^2(\Omega) \to \Vh$,
satisfies
\begin{align} \label{eq:l2projest}
    \|u - \pi_h^0 u\|_{H^s} \leq C h^{r-s} \|u\|_{H^r} \qquad \forall u \in H^r(\Omega),
\end{align}
and all parameters $-1 \le s \le r$ and $0 \le r \le 4$. 
On quasi-uniform meshes $\Th$, which we consider here, the projection $\pi_h^0$ is also stable with respect to the $H^1$-norm, i.e., 
\begin{align} \label{eq:h1stab_l2proj}
\|\pi_h^0 u\|_{H^1} \le C \|u\|_{H^1} \qquad \forall u \in H^1(\Omega).
\end{align}
The $H^1$-elliptic projection $\pi_h^1 : H^1(\Omega) \to \Vh$, defined in  \eqref{eq:defh1proj}, satisfies
\begin{align}\label{eq:h1porjest}
 \|u - \pi_h^1 u\|_{H^s} \leq C h^{r-s} \|u\|_{H^r} \qquad \forall u \in H^r(\Omega),
\end{align}
for all parameters $-1 \le s \le r$ and $1 \le r \le 3$. 
Since we assumed quasi-uniformity of the mesh $\Th$, we can further resort to the inverse inequalities
\begin{align} \label{eq:inverse}
    \|v_h\|_{H^1} \le c_{inv} h^{-1} \|v_h\|_{L^2} 
    \qquad \text{and} \qquad 
    \|v_h\|_{L^p} \le c_{inv} h^{d/p-d/q} \|v_h\|_{L^q} 
\end{align}
which hold for all discrete functions $v_h \in \Vh$ and all $1 \le q \le p \le \infty$. 

\subsection{Discrete interpolation}
Let us introduce the discrete Laplacian $\Delta_h:\Vh\to\Vh$ given by
\begin{align}
    \la \Delta_h v_h,w_h \ra = - \la \na v_h,\na w_h \ra, \qquad \forall w_h\in\Vh. \label{eq:disclap}
\end{align}
In particular since $\Delta_h v_h\in \Vh$ the $L^2$-norm can be deduced by setting $w_h=\Delta_h v_h$, i.e,
\begin{equation*}
    \norm{\Delta_h v_h}_0^2 = - \la \na v_h,\na\Delta_h v_h \ra.
\end{equation*}
For a quasi-uniform triangulation, see assumption~(A6), one can obtain
\begin{align}
\norm{\na v_h}_{0,3} &\leq C\norm{\Delta_h v_h}^{1/2}_0\norm{\na v_h}_0^{1/2} + C\norm{\na v_h}_0.  \label{eq:disclapinterpgrad}
\end{align}
A proof of these discrete interpolation inequalities can be found in~\cite{Diegel,Liu2016}.

\subsection{Time discretization}
The piecewise linear interpolation 
$\I_\tau^1:H^1(0,T)\to P_1^c(\Itau)$ and the piecewise constant projection $\bar \pi_\tau^0 : L^2(0,T) \to P_0(\I_\tau)$ in time satisfy
\begin{align} 
    \|u - \bar\pi_\tau^0 u\|_{L^p(0,T)} &\le C \tau^{1/p-1/q+r} \|u\|_{W^{r,q}(0,T)} \quad && \forall u \in W^{r,q}(0,T), \label{eq:timprojest}\\
    \|u - \I_\tau^1 u\|_{L^p(0,T)} &\leq C\tau^{1/p-1/q+2} \|u\|_{W^{r,q}(0,T)} \quad && \forall u \in W^{s,q}(0,T) \label{eq:timinterpest}
\end{align}
with $1 \le p \le q  \le \infty$ and for $0 \le r \le 1$, respectively, $1 \le s \le 2$; see again \cite{BrennerScott}.
Moreover, these operator commute with differentiation in the sense that
\begin{align} \label{eq:commuting} 
\dt (\I_\tau^1 u) = \bar \pi_\tau^0 (\dt u).
\end{align}
We can now further establish the following nonlinear projection error estimates~\cite{brunk2021ch}.

\subsection{Projection estimates for nonlinear terms}

\begin{lemma} \label{lem:average_time_err}
Let $\bar a = \pi_0 a$ denote the $L^2$-orthogonal projection onto $P_{0}(\Itau)$.  
Then for any $u,v \in W^{2,p}(0,T)$ with $1 \le p \le \infty$, one has 
\begin{align}
\|\overline{\bar u\bar v}-\overline{uv}\|_{L^p(0,T)} 
&\le C \tau^{2} \|u\|_{W^{2,p}(0,T)} \|v\|_{W^{2,p}(0,T)},  \label{eq:midpoint_order}
\end{align}
with a constant $C$ independent of $u$ and $v$. 
\end{lemma}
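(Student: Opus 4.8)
The plan is to exploit the mean-zero (orthogonality) structure of the piecewise-constant projection, which is what upgrades the naive first-order bound to the claimed second-order one. Throughout I write $\bar u = \bar\pi_\tau^0 u$ and $\bar v = \bar\pi_\tau^0 v$ for the local averages on the grid $\Itau$; on each interval $I^n$ these are constants, hence so is their product $\bar u\bar v$, which is therefore left invariant by $\bar\pi_\tau^0$. The first and decisive step is to establish the algebraic identity
\[
\overline{\bar u\bar v} - \overline{uv} = -\,\overline{(u-\bar u)(v-\bar v)}.
\]
I would verify this by adding and subtracting $\bar u\,v$: the term $\overline{\bar u(\bar v - v)}$ vanishes because $\bar u$ is piecewise constant, so it may be pulled out of the interval-wise average, leaving $\bar u\,\overline{(\bar v - v)} = 0$ since $v - \bar v$ has vanishing mean on each $I^n$ by definition of the $L^2$-projection; and the remaining term $\overline{(\bar u - u)\,v}$ equals $\overline{(\bar u - u)(v-\bar v)}$ for the same mean-zero reason. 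This identity is the crux of the argument: it rewrites the product error as the projection of a product of \emph{two} first-order projection errors, each of which will contribute one power of $\tau$.

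Granting the identity, the estimate reduces to three standard ingredients applied to the right-hand side. First, the piecewise-constant projection $\bar\pi_\tau^0$ is a local averaging operator and is therefore $L^p$-stable with constant one by Jensen's inequality, so that $\|\overline{(u-\bar u)(v-\bar v)}\|_{L^p(0,T)} \le \|(u-\bar u)(v-\bar v)\|_{L^p(0,T)}$. Second, a H\"older split with exponents $p$ and $\infty$ gives $\|(u-\bar u)(v-\bar v)\|_{L^p(0,T)} \le \|u-\bar u\|_{L^p(0,T)}\,\|v-\bar v\|_{L^\infty(0,T)}$. Third, each factor is controlled by the first-order time-projection estimate \eqref{eq:timprojest}, that is, $\|u-\bar u\|_{L^p(0,T)} \le C\tau\|u'\|_{L^p(0,T)}$ together with the interval-wise bound $\|v-\bar v\|_{L^\infty(0,T)} \le \tau\|v'\|_{L^\infty(0,T)}$.

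To conclude, I would absorb the $L^\infty$-norm of $v'$ into the $W^{2,p}$-norm via the one-dimensional Sobolev embedding $W^{1,p}(0,T)\hookrightarrow L^\infty(0,T)$ applied to $v'$, which yields $\|v'\|_{L^\infty(0,T)} \le C\|v\|_{W^{2,p}(0,T)}$; combined with $\|u'\|_{L^p(0,T)} \le \|u\|_{W^{2,p}(0,T)}$ this produces exactly $C\tau^2\|u\|_{W^{2,p}}\|v\|_{W^{2,p}}$. The roles of $u$ and $v$ may be interchanged, which is consistent with the symmetric hypothesis. The only genuinely non-routine point, and hence the main obstacle, is recognizing that the second-order rate cannot come from a direct estimate of $\bar u\bar v - uv$ (which would lose a power of $\tau$) but must be extracted from the mean-zero cancellation encoded in the identity above; a secondary subtlety is the choice of the $L^p$/$L^\infty$ H\"older pairing, which is precisely what lets the extra derivative available from $W^{2,p}$ be spent on the $L^\infty$ factor, so that no integrability beyond the assumed regularity is required.
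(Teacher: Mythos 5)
Your proof is correct: the orthogonality identity $\overline{\bar u\bar v}-\overline{uv}=-\overline{(u-\bar u)(v-\bar v)}$ holds exactly as you argue, and combining the $L^p$-stability of $\bar\pi_\tau^0$, the $L^p$--$L^\infty$ H\"older split, the first-order projection bounds, and the one-dimensional embedding $W^{1,p}(0,T)\hookrightarrow L^\infty(0,T)$ yields the claimed $O(\tau^2)$ estimate. The paper itself gives no proof here but simply cites \cite{brunk2021ch}, and your argument is the standard one used there, so there is nothing to add.
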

 In a similar manner, we obtain the following estimate~\cite{brunk2021ch}.
\begin{lemma}
Let $\bar a = \pi_0 a$ denote the $L^2$-orthogonal projection onto $P_{0}(\Itau)$. Furthermore, let $\phi\in P_1(\Itau)$. 
Then for any $u,v \in W^{2,p}(0,T)$ with $1 \le p \le \infty$, one has 
\begin{align}
\| g(\bar\phi)-\overline{g(\phi)}\|_{L^p(0,T)} 
&\le C \tau^{2} \|g(\phi)\|_{W^{2,p}(0,T)},  \label{eq:midpoint_order_single}
\end{align}
with a constant $C$ depending only on the polynomial degree $k$. 
\end{lemma}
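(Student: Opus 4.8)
The plan is to localise the estimate to a single time interval and to recognise the left-hand side as the error of the \emph{midpoint quadrature rule} applied to the composite function $h := g(\phi)$. Fix an interval $I^n=(t^{n-1},t^n)$ and let $m:=t^{n-1/2}$ denote its midpoint. On $I^n$ both $g(\bar\phi)$ and $\overline{g(\phi)}$ are constant in time, so it suffices to compare these two constants. The decisive observation, which is where the hypothesis $\phi\in P_1(\Itau)$ enters in an essential way, is that $\bar\phi$ restricted to $I^n$ equals the average of $\phi$ over $I^n$, and that for a \emph{linear} function this average coincides with the midpoint value; hence $\bar\phi|_{I^n}=\phi(m)$ and consequently
\begin{align*}
g(\bar\phi)\big|_{I^n} = g(\phi(m)) = h(m),
\end{align*}
whereas $\overline{g(\phi)}|_{I^n} = \tfrac1\tau\int_{I^n} h(s)\,\ddta$. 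The quantity to be estimated on $I^n$ is therefore exactly the midpoint-rule error $h(m)-\tfrac1\tau\int_{I^n} h(s)\,\ddta$.

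Next I would expand $h$ by Taylor's theorem about $m$ with integral remainder, $h(s)=h(m)+h'(m)(s-m)+\int_m^s (s-r)h''(r)\,\mathrm{d}r$. Integrating over $I^n$ and dividing by $\tau$, the linear term drops out by the symmetry $\int_{I^n}(s-m)\,\ddta=0$, leaving
\begin{align*}
h(m) - \frac1\tau\int_{I^n} h(s)\,\ddta = -\frac1\tau\int_{I^n}\int_m^s (s-r)h''(r)\,\mathrm{d}r\,\ddta.
\end{align*}
Since $|s-r|\le\tau/2$ for $r$ lying between $m$ and $s\in I^n$, the resulting constant $E_n$ satisfies $|E_n|\le\tfrac\tau2\,\|h''\|_{L^1(I^n)}$.

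Finally I would pass to the $L^p$ norm. A Hölder estimate on the interval of length $\tau$ gives $\|h''\|_{L^1(I^n)}\le\tau^{1-1/p}\|h''\|_{L^p(I^n)}$, so that $|E_n|\le\tfrac12\tau^{2-1/p}\|h''\|_{L^p(I^n)}$ and, because $E_n$ is constant on $I^n$, $\|E_n\|_{L^p(I^n)}=|E_n|\,\tau^{1/p}\le\tfrac12\tau^2\|h''\|_{L^p(I^n)}$. Summing the $p$-th powers over $n$ (or taking the maximum over $n$ when $p=\infty$) and using $\|h''\|_{L^p(0,T)}\le\|g(\phi)\|_{W^{2,p}(0,T)}$ yields the claimed bound with $C=\tfrac12$.

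The computation is routine once the reduction is in place; the only genuine obstacle is the first step, namely recognising that $g(\bar\phi)$ is the \emph{midpoint value} of $h=g(\phi)$ rather than its piecewise-constant projection $\overline{g(\phi)}$. It is precisely the piecewise linearity of $\phi$ that forces the averaged (projected) argument to agree with the midpoint, which in turn produces the second-order cancellation of the linear Taylor term; for a general $\phi\in L^2$ the symmetry would be lost and one could only expect first order.
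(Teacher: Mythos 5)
Your proof is correct. The paper itself gives no argument for this lemma (it simply defers to the reference \cite{brunk2021ch}), but the route you take --- reducing $g(\bar\phi)|_{I^n}$ to the midpoint value $g(\phi(t^{n-1/2}))$ via the linearity of $\phi$ on $I^n$, and then bounding the midpoint-rule error of $h=g(\phi)$ by a Taylor expansion with integral remainder about the midpoint --- is exactly the standard argument one expects here, and your bookkeeping of the $\tau$-powers through the H\"older step is right, giving $C=\tfrac12$. The only caveat worth recording is that $g(\phi)$ for $\phi\in P_1(\Itau)$ need not lie in $W^{2,p}(0,T)$ globally unless $\phi$ is continuous across the nodes; since your estimate is assembled interval by interval, the right-hand side should be read as the broken (elementwise) $W^{2,p}$ norm, which is how the lemma is used in the residual estimates anyway.
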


The following nonlinear projection estimates will be required for estimating the residual terms in the following section.
\begin{lemma} \label{lem_prodest}
Let $(\hat\phi_{h,\tau},\hbmu_{h,\tau},\hat q_{h,\tau})$ given by \eqref{eq:fullproj_1}. Then the following estimates hold
\begin{align*}      \int_{I_n}&\norm{A'(\hbphi_{h,\tau})^2\hbq_{h,\tau}\snorm{\na\hbphi_{h,\tau}}^2 - \overline{A'(\phi))^2 q\snorm{\na\phi}^2}}_{0,6/5}^2 \leq C_1h^4 + C_2\tau^4,\\
\int_{I_n}& \norm{(A\cdot A')(\hbphi_{h,\tau})\na\hbq_{h,\tau}\na\hbphi_{h,\tau}  - \overline{(A\cdot A')(\phi)\na q\na\phi}}_{0,6/5}^2\leq C_3h^4 + C_4\tau^4, \\
\int_{I_n}& \norm{A'(\hbphi_{h,\tau})b^{1/2}(\hbphi_{h,\tau})\na\hbmu_{h,\tau}\na\hbphi_{h,\tau} - \overline{A'(\phi)b^{1/2}(\phi)\na\mu\na\phi}}_{0,6/5}^2 \leq C_5h^4 + C_6\tau^4. 
\end{align*}
\end{lemma}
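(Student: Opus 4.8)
The plan is to prove each of the three estimates in Lemma~\ref{lem_prodest} by the same template, so I would first isolate the recurring structure. Each left-hand side has the form $\int_{I_n} \norm{G(\hbphi_{h,\tau},\hat g_{h,\tau}) - \overline{G(\phi,g)}}_{0,6/5}^2 \ddta$, where $G$ is a smooth product of the coefficient functions $A,A',b^{1/2}$ and of gradients of the discrete/exact solution components, and where $\hat g_{h,\tau}$ stands for $\hbq_{h,\tau}$ or $\hbmu_{h,\tau}$. The standard device is to insert and subtract the piecewise-constant-in-time projection $\overline{G(\phi,g)}$ of the \emph{continuous} nonlinearity evaluated at the continuous solution, splitting the error into a purely temporal projection error and a spatial error. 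That is, I would write
\begin{align*}
G(\hbphi_{h,\tau},\hat g_{h,\tau}) - \overline{G(\phi,g)}
= \big(G(\hbphi_{h,\tau},\hat g_{h,\tau}) - \overline{G(\pi_h \phi, \pi_h g)}\big)
+ \big(\overline{G(\pi_h\phi,\pi_h g)} - \overline{G(\phi,g)}\big),
\end{align*}
where $\pi_h$ denotes the appropriate spatial projection ($\pi_h^1$ for $\phi$, $\pi_h^0$ for $\mu,q$). The second bracket is controlled by the $L^2$-stability of $\bar\pi_\tau^0$ together with the spatial projection estimates \eqref{eq:l2projest}, \eqref{eq:h1porjest}, yielding the $h^4$ contribution; the remaining temporal discrepancy between $G$ evaluated on the interpolant $\hbphi_{h,\tau}=\I_\tau^1\pi_h^1\phi$ and the time-projection of $G$ is exactly the situation handled by the product estimate \eqref{eq:midpoint_order} and its single-function analogue \eqref{eq:midpoint_order_single}, giving the $\tau^4$ contribution.

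The key steps, in order, are as follows. First I would expand each product $G$ by the (multilinear) triangle inequality into a sum of terms in each of which only one factor is replaced at a time, so that every difference is either a coefficient difference $A^{(k)}(\hbphi_{h,\tau})-A^{(k)}(\phi)$ — controlled by the Lipschitz bounds in (A3) and the $H^1$-error of $\phi$ — or a gradient difference $\na\hbphi_{h,\tau}-\na\phi$, $\na\hbq_{h,\tau}-\na q$, $\na\hbmu_{h,\tau}-\na\mu$, controlled by Lemma~\ref{lem:projerr}. Second, I would track the integrability exponents: the target norm is $L^{6/5}$ in space, and since each summand is a product of (typically) three factors, I would distribute the $6/5$ via a H\"older triple such as $\tfrac{1}{6/5}=\tfrac16+\tfrac16+\tfrac1{\,}$ adjusted to the factor count, placing the coefficient factors in $L^\infty$ (bounded by (A1),(A3)), one gradient in $L^2$, and the error factor in $L^3$ so that the $H^1\hookrightarrow L^6$ and $W^{1,3}$ regularity \eqref{eq:reg1}--\eqref{eq:reg3} of the exact solution can be invoked. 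Third, I would collect the two error sources — the spatial piece scaling like $h^{r-1}$ with $r$ as large as the regularity in \eqref{eq:reg1}--\eqref{eq:reg3} permits, hence $h^2$ and thus $h^4$ after squaring, and the temporal piece scaling like $\tau^2$ and thus $\tau^4$ after squaring — and sum over the interval $I_n$.

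The main obstacle I anticipate is the bookkeeping of mixed space--time errors in the first bracket, where $G$ is evaluated at $\I_\tau^1\pi_h\phi$ rather than at $\pi_h\phi$ directly: the interpolation-in-time and projection-in-space operators do not commute through the nonlinearity, so I cannot simply quote \eqref{eq:midpoint_order} as a black box. To handle this cleanly I would first commute $\I_\tau^1$ past the spatial projection at the linear level (using \eqref{eq:commuting} and the fact that $\pi_h$ is time-independent), reducing the nonlinear temporal error to one for the continuous-in-space but projected function, to which \eqref{eq:midpoint_order} and \eqref{eq:midpoint_order_single} apply after verifying the required $W^{2,p}(0,T)$ time regularity of the composite nonlinearities $g\mapsto G(\pi_h\phi,\pi_h g)$ — this last verification uses the $H^2(0,T;\cdot)$ regularity in \eqref{eq:reg1}--\eqref{eq:reg3} and the $C^2$-smoothness of $A$ from (A3). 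The remaining calculations are routine H\"older and triangle-inequality estimates, so I would defer them to the appendix as the statement already indicates, and the constants $C_1,\dots,C_6$ then depend only on the coefficient bounds in (A1),(A3) and on the norms of $(\phi,\mu,q)$ appearing in \eqref{eq:reg1}--\eqref{eq:reg3}.
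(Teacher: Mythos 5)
Your proposal is correct and follows essentially the same route as the paper: a telescoping decomposition that replaces one factor at a time, with the coefficient differences handled by the Lipschitz bounds on $A$, $A'$, $b^{1/2}$ and the spatial projection estimates, the pure time-averaging discrepancy handled by the nonlinear projection estimates \eqref{eq:midpoint_order} and \eqref{eq:midpoint_order_single}, and H\"older used to land the products in $L^{6/5}$ via $L^\infty$, $L^3$, $L^6$ (and $L^2$) placements. The paper simply interleaves the spatial and temporal splittings term by term rather than performing your top-level space-then-time split, which is an organizational rather than substantive difference.
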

\begin{proof}
We introduce the abbreviations $B_{1}(\cdot)=A'(\cdot)^2, B_{2}(\cdot)=A'(\cdot)A(\cdot) $
We consider the first term and by addition of suitable zeros we estimate
\begin{align*}
&\int_{I_n}\norm{B_1(\hbphi_{h,\tau})\hbq_{h,\tau}\snorm{\na\hbphi_{h,\tau}}^2 - \overline{B_1(\phi)) q\snorm{\na\phi}^2}}_{0,6/5}^2 \\
&\leq \int_{I_n}\norm{(B_1(\hbphi_{h,\tau})-\overline{B_1(\phi)})\bar q \snorm{\nabla\bar\phi}^2}_{0,6/5}^2
+ \norm{B_1(\hbphi_{h,\tau})\snorm{\nabla\bar\phi}^2(\hbq_{h,\tau}-\bar q)}_{0,6/5}^2 \\
&\, + \norm{B_1(\hbphi_{h,\tau})\hbq_{h,\tau}(\nabla\hbphi_{h,\tau}+\bar\phi)(\nabla\hbphi_{h,\tau}-\bar\phi)}_{0,6/5}^2
+ \norm{\overline{B_1(\phi)}\bar q\snorm{\nabla\bar\phi}^2 - \overline{B_1(\phi) q\snorm{\nabla\phi}^2}}_{0,6/5}^2 \\
& = (a) + (b) + (c) + (d).
\end{align*}
For the first term we use Lemma \ref{lem:average_time_err} and estimate
\begin{align*}
  (a) & \leq  \norm{\bar q}_{0,\infty}^2\norm{\nabla\bar\phi}_{0,3}^4\int_{I_n}C\norm{\hbphi_{h,\tau} - \bar\phi}_{0,6}^2 + \int_{I_n}\norm{B_1(\bar\phi)-\overline{B_1(\phi)}}_{0,6}^2 \\
  & \leq \norm{ q}_{L^\infty(L^\infty)}^2\norm{\nabla\phi}_{L^\infty(L^3)}^4(\tau^4\norm{B_1(\phi)}^2_{H^2(L^6)} + h^4\norm{\phi}_{L^2(H^3)}^2).
\end{align*}
The second term can be bounded by
\begin{align*}
 (b) & \leq C\norm{\nabla\bar\phi}_{0,3}^4\int_{I_n} \norm{\hbq_{h,\tau}-\bar q}^2_{0,6}  \leq  Ch^4\norm{q}_{L^2(H^3)}^2\norm{\nabla\phi}_{L^\infty(L^3)}^4.
\end{align*}
For the third term, we can use 
\begin{align*}
 (c) & \leq  \norm{\nabla\bar\phi}_{0,3}^2 \norm{\hbq_{h,\tau}}^2_{0,\infty}  \int_{I_n}\norm{\nabla\hbphi_{h,\tau}-\bar\phi}_{0,2}^2 
 \leq Ch^4\norm{\nabla\phi}_{L^\infty(L^3)}^2 \norm{q}^2_{L^\infty(L^\infty)}\norm{\phi}_{L^2(H^3)}.  
\end{align*}
The last term can again be treated by Lemma~\ref{lem:average_time_err} which leads to
\begin{align*}
 (d) \leq \tau^4\norm{B_1(\phi)q\snorm{\nabla\phi}^2}_{H^2(L^{6/5})}^2.   
\end{align*}
In summary, this yields the first bound of the lemma.
The second bound stated in the lemma can be rewritten and estimated, using similar arguments, by
\begin{align*}
\int_{I_n}\norm{B_2(\hbphi_{h,\tau})&\nabla\hbq_{h,\tau}\na\hbphi_{h,\tau} - \overline{B_2(\phi)) \nabla q\na\phi}}_{0,6/5}^2 \\
&\leq \int_{I_n}\norm{(B_2(\hbphi_{h,\tau})-\overline{B_2(\phi)})\nabla\bar q \nabla\bar\phi}_{0,6/5}^2 
+ \norm{B_2(\hbphi_{h,\tau})\nabla\bar\phi\nabla(\hbq_{h,\tau}-\bar q)}_{0,6/5}^2 \\
& \,
+ \norm{B_2(\hbphi_{h,\tau})\nabla\hbq_{h,\tau}(\nabla\hbphi_{h,\tau}-\bar\phi)}_{0,6/5}^2
+ \norm{\overline{B_2(\phi)}\nabla\bar q\nabla\bar\phi - \overline{B_2(\phi) \nabla q\nabla\phi}}_{0,6/5}^2.
\end{align*}
The individual terms can now be estimated as before.
The integral in the last bound of the lemma can be treated like the second one after replacing $B_2$ by $A'(\cdot)b^{1/2}(\cdot)$ and $q$ by $\mu$.
\end{proof}

\renewcommand{\thesection}{B}
\section{Proof of Lemma~\ref{lem:est_res_full}}

We have now assembled all ingredients for the proof of Lemma~\ref{lem:est_res_full}. Without further mentioning, we assume the conditions of Lemma~\ref{lem:est_res_full} to be valid. We estimate the three residuals separately.

\subsection*{First residual} 
By definition of the dual norm~\eqref{eq:dualnorm} and splitting the terms,  we get 
\begin{align*}
 \int_{I_n} \norm{\bar  r_{1,h,\tau}}_{-1,h}^2 \ddta &\leq  3 \int_{I_n} \norm{\dt(\pi^1_h\phi-\phi)}_{-1,h}^2 + \norm{b(\bar\phi_{h,\tau})\na\bmu_{h,\tau} - \overline{b(\phi)\na\mu}}_0^2 \\
 &\qquad \qquad + \norm{c(\bar\phi_{h,\tau})\na(A(\bphi_{h,\tau})\hbq_{h,\tau}) - \overline{c(\phi)\na(A(\phi)q)}}_0^2 
  \ddta \\
& = (i) + (ii) + (iii) .
\end{align*}
The first term can be bounded using interpolation error estimates by
\begin{equation*}
(i) \leq Ch^4\norm{\dt\phi}_{L^2(H^1)}^2.    
\end{equation*}
The second term can be further split into multiple parts according to
\begin{align*}
(ii) & \leq C\int_{I_n}\norm{b(\bar\phi_{h,\tau})\na(\hbmu_{h,\tau} - \na\bar\mu)}_0^2 + \norm{(b(\bar\phi_{h,\tau}) - b(\bar\phi))\na\bar\mu}_0^2 \\
& \qquad \qquad  + \norm{(b(\bar\phi) - \overline{b(\phi))\na\bar\mu}}_0^2 + \norm{\overline{b(\phi)}\na\bar\mu - \overline{b(\phi)\na\mu}}_0^2  \ddta   \\
 &= (a) + (b) + (c) + (d).
\end{align*}
By the stability of the $L^2$-projection in time and the estimates of Lemma~\ref{lem:projerr}, we get
\begin{equation*}
 (a) \leq C\int_{I_n}\norm{\pi_h^0\mu-\mu}_1^2  \ddta \leq Ch^4\norm{\mu}_{L^2(H^3)}^2. 
\end{equation*}
With the bounds for the derivatives of the parameter function $b(\cdot)$, the Hölder inequality, the stability of the $L^2$-projection, and Lemma~\ref{lem:projerr}, we further get
\begin{align*}
(b) &\leq C\int_{I_n}\norm{\phi_{h,\tau}-\phi}_{0,6}^2\norm{\mu}_{1,3}^2 \ddta 
\leq Ch^4\norm{\mu}_{L^\infty(W^{1,3})}^2\norm{\phi}_{L^2(H^3)}^2 \\
& \qquad \qquad + C\tau^4\norm{\mu}_{L^\infty(W^{1,3})}^2\norm{\phi}_{H^2(H^1)}^2 + \norm{\mu}_{L^\infty(W^{1,3})}^2\int_{I_n} \E^\phi_\alpha(\phi_{h,\tau}|\hat\phi_{h,\tau}) \ddta. 
\end{align*}
The third term can be treated by Lemma~\ref{eq:midpoint_order_single} and yields
\begin{equation*}
(c) \leq C\tau^4\norm{\mu}_{L^\infty(W^{1,3})}^2\norm{b(\phi)}_{H^2(L^6)}^2.  
\end{equation*}
The last term in the above expansion can be treated by Lemma~\ref{eq:midpoint_order} which leads to 
\begin{equation*}
(d) \leq C\tau^4\norm{b(\phi)\na\mu}_{H^2(L^2)}^2.   
\end{equation*}
The last norm can be further expanded using the product and chain rule of differentiation.
All terms arising in these computations can be controlled appropriately.
In summary, we thus get
\begin{align*}
 (ii) \leq C_1(\phi,\mu)h^4 + C_2(\phi,\mu)\tau^4 + C(\norm{\mu}_{L^\infty(W^{1,3})})\int_{I_n} \E_\alpha(\phi_{h,\tau}|\hat\phi_{h,\tau}) \ddta.  
\end{align*}
The constants depend on 
norms of the solution that are bounded by our assumptions.
We continue with the third term, which can be estimated by
\begin{align*}
(iii) &\leq  \int_{I_n} \norm*{c(\bphi_{h,\tau})A(\bphi_{h,\tau})\na\hbq_{h,\tau} - \overline{c(\phi)A(\bphi_{h,\tau})\nabla q} }^2_0 \\
&\qquad \qquad + \norm*{c(\bphi_{h,\tau})\hbq_{h,\tau}A^{'}(\bphi_{h,\tau})\na\bphi_{h,\tau} - \overline{c(\phi)qA^{'}(\phi)\nabla \phi} }^2_0 \ddta. 
\end{align*}
The first part can be estimated similar to term $(i)$ before, which leads to
\begin{align*}
(iiia) \leq C_1(\phi,q)h^4 + C_2(\phi,q)\tau^4 + C(\norm{q}_{L^\infty(W^{1,3})})\int_{I_n} \E^\phi_\alpha(\phi_{h,\tau}|\hat\phi_{h,\tau}) \ddta.
\end{align*}
The two constants 
again depend on the norms of the solution that are bounded by assumption.
The second part can be further split and estimated by 
\begin{align*}
(iiib) 
&\leq 2 \int_{I_n}\norm*{c(\bphi_{h,\tau})A^{'}(\bphi_{h,\tau})\na\bphi_{h,\tau} - \overline{c(\phi)A^{'}(\phi)\nabla \phi} }^2_0\norm{\hbq_{h,\tau}}_{0,\infty}^2 \\
&\qquad \qquad + \norm*{\overline{c(\phi)\hbq_{h,\tau}A^{'}(\phi)\nabla \phi}- \overline{c(\phi) q A^{'}(\phi)\nabla \phi} }^2_0 \ddta  \\
&\le \int_{I_n} C\E_\alpha^\phi(\phi_{h,\tau}|\hat\phi_{h,\tau}) + C_3(\phi,q)h^4 + C_4(\phi,q)\tau^4,    
\end{align*}
where we have used similar arguments as in the previous steps. 
The constants again only depend of bounds for the parameters and the solutions that are available from our assumptions. 
By combination of all estimates, we can finally bound the first residual by
\begin{equation*}
\int_{I_n} \norm{\bar  r_{1,h,\tau}}_{-1,h}^2 \ddta \leq C(h^4 + \tau^4) + \int_{I_n}C \E^\phi_\alpha(\phi_{h,\tau}|\hat\phi_{h,\tau}) \ddta.  
\end{equation*}
The constants in this estimate are independent of the discretization parameters.

\subsection*{Second residual} 
The second residual can be expressed in the strong form as 
\begin{align*}
    \bar r_{2,h,\tau} = (\overline{\pi_h^0 \mu} - \overline{\I_\tau^1 \pi_h^0 \mu}) + (\overline{\I_\tau^1 \phi} - \overline{\hat \phi_{h,\tau}}) + (\overline{f'(\hat \phi_{h,\tau})} - \overline{\I_\tau^1 f'(\phi)}).
\end{align*}
Recall that $\overline{g} = \bar \pi_\tau^0 g$ is used to denote the piecewise constant projection of a function $g$ with respect to time. 
This pointwise representation allows us to estimate
\begin{align*}
\frac{1}{3} \int_{I_n} \|\bar r_{2,h,\tau}\|_1^2 \ddta 
&\leq \|\pi_h^0 \mu - \I_\tau^1 \pi_h^0 \mu\|^2_{L^2(H^1_p)} + \|\I_\tau^1 \phi - \hat \phi_{h,\tau}\|^2_{L^2(H^1_p)} \\
&+ \|f'(\hat \phi_{h,\tau}) - \I_\tau^1 f'(\phi)\|^2_{L^2(H^1_p)}  \\
&= (i) + (ii) + (iii) .   
\end{align*}
Using the contraction property of the $L^2$-projection in space, we obtain for the first term
\begin{equation*}
(i) \leq \norm{\mu - I_\tau^1\mu}_{L^2(H^1)}^2 \leq C\tau^4\norm{\mu}_{H^2(H^1)}^2.
\end{equation*}
With the error estimate for the $H^1$-projection $\pi_h^1$, we further find 
\begin{equation*}
(ii) \leq C\norm{\phi-\pi_h^1\phi}^2_{L^\infty(H^1)} \leq Ch^4\norm{\phi}^2_{L^\infty(H^3)}.    
\end{equation*}
For the last term we employ the uniform bounds of $\phi$ and $\hat\phi_{h,\tau}$ in $L^\infty(0,T;W^{1,\infty}(\Omega))$. Hence all terms of the form $f^{(k)}(\cdot)$ can be bounded uniformly by a constant $C(f)$, and we obtain
\begin{align*}
(iii) &\leq 2 \norm{f'(\hat\phi_{h,\tau})-f'(\phi)}_{L^2(H^1)}^2 + 2 \norm{f'(\phi) - I^1_\tau f'(\phi)}_{L^2(H^1)}^2  \\
&\leq C(f)\norm{\hat\phi_{h,\tau}-\phi}_{L^2(H^2)}^2 + C\tau^4\norm{f'(\phi)}^2_{H^2(H^1)} \\
&\leq C'(f)h^4\norm{\phi}_{L^2(H^3)}^2 + C'(f)\tau^4\norm{\phi}_{H^2(H^1)}^2 + C\tau^4\norm{f'(\phi)}^2_{H^2(H^1)}.
\end{align*}
The terms involving derivatives of $f(\phi)$ can all be estimated due to the regularity assumptions on $f$ and $\phi$. In summary, we obtain the following bound for the second residual
\begin{equation*}
 \int_{I_n}\norm{\bar r_{2,h,\tau}}_{1}^2 \ddta \leq C(h^4 + \tau^4).  
\end{equation*}
The constant is again independent of the discretization parameters. 

\subsection*{Third residual}
Due to many nonlinearities, this is the most technical part of our estimates. As a preliminary step, we decompose
\begin{align*}
& d_0 \la \na(A(\bphi_{h,\tau})\hbq_{h,\tau}),\na(A(\bphi_{h,\tau})\bar\zeta_{h,\tau}) \ra \\
&= d_0 \la A^2(\bphi_{h,\tau})\na\hbq_{h,\tau},\na\bar\zeta_{h,\tau} \ra
+ d_0 \la (A'(\bphi_{h,\tau}))^2\hbq_{h,\tau}\snorm{\na\bphi_{h,\tau}}^2,\bar\zeta_{h,\tau} \ra \\
&\qquad \qquad + d_0 \la (A\cdot A')(\bphi_{h,\tau})\hbq_{h,\tau}\na\bphi_{h,\tau},\na\bar\zeta_{h,\tau} \ra  + d_0 \la (A\cdot A')(\bphi_{h,\tau})\na\hbq_{h,\tau}\na\bphi_{h,\tau}, \bar\zeta_{h,\tau} \ra,  
\end{align*}
and in a similar manner, we also split
\begin{align*}
&\la c(\bphi_{h,\tau})\na\hbmu_{h,\tau},\na(A(\bphi_{h,\tau})\bar\zeta_{h,\tau}) \\
&= \la A(\bphi_{h,\tau})c(\bphi_{h,\tau})\na\hbmu_{h,\tau},\na\bar\zeta_{h,\tau} \ra + \la A'(\bphi_{h,\tau}) c(\bphi_{h,\tau})\na\hbmu_{h,\tau}\na\bphi_{h,\tau},\bar\zeta_{h,\tau} \ra.   
\end{align*}
From the definition of the dual norm~\eqref{eq:dualnorm}, the binomial inequality, and the bound for the coefficient $d_0$, we then obtain
\begin{align*}
 &\int_{I_n}\norm{\bar r_{3,h,\tau}}_{-1,h}^2 \ddta \leq  8 \int_{I_n} \norm{\kappa^{1/2}(\bphi_{h,\tau})\hbq_{h,\tau}-\overline{\kappa^{1/2}(\phi) q}}_0^2 \\
 &\qquad \qquad + \norm{A^2(\bphi_{h,\tau})\na\hbq_{h,\tau} - \overline{A^2(\phi)\na q}}_0^2 \\
 &\qquad \qquad+ \norm{A'(\bphi_{h,\tau}))^2\hbq_{h,\tau}(\na\bphi_{h,\tau})^2 - \overline{A'(\phi))^2 q (\na\phi)^2}}^2_{0,6/5}  \\
 &\qquad \qquad +\norm{(A\cdot A')(\bphi_{h,\tau})\hbq_{h,\tau}\na\bphi_{h,\tau} - \overline{(A\cdot A')(\phi) q\na\phi}}_0^2 \\
 &\qquad \qquad + \norm{(A\cdot A')(\bphi_{h,\tau})\na\hbq_{h,\tau}\na\bphi_{h,\tau} - \overline{(A\cdot A')(\phi)\na q\na\phi}}_{0,6/5}^2 \\
 &\qquad \qquad  + \norm{A(\bphi_{h,\tau}) c(\bphi_{h,\tau})\na\hbmu_{h,\tau} - \overline{A(\phi) c(\phi)\na\mu}}_0^2 \\
 &\qquad \qquad + \norm{A'(\bphi_{h,\tau}) c(\bphi_{h,\tau})\na\hbmu_{h,\tau}\na\bphi_{h,\tau} - \overline{A'(\phi) c(\phi)\na\mu\na\phi}}_{0,6/5}^2 + \norm{\nabla(\hbq_{h,\tau}-\bar q)}_0^2\ddta \\
 & = (i) + (ii) + (iii) + (iv) + (v) + (vi) + (vii) + (viii).
\end{align*}
With similar arguments as used for bounding the first residual above, we obtain
\begin{align*}
 (i)\leq C_1 h^4 + C_2\tau^4 + C(\norm{q}_{L^\infty({W^{1,3})}},\norm{\mu}_{L^\infty(W^{1,3})})\int_{I_n} \E^\phi_\alpha(\phi_{h,\tau}|\hat\phi_{h,\tau}) \ddta. 
\end{align*}
The constants $C_1$, $C_2$ again only depend on quantities that can be controlled by our assumptions. 
The terms $(ii)$, $(iv)$, and $(vi)$ can be estimated in the same manner as the terms $(ii)$ and $(iii)$ in the first residual and term $(viii)$ is bounded by the projection error Lemma \ref{lem:projerr}; the details are therefore omitted.
For the first term containing the $L^{6/5}$-norm, we have
\begin{align*}
(iii) &\leq  2 \int_{I_n} \norm{A'(\bphi_{h,\tau})^2(\snorm{\na\bphi_{h,\tau}}^2 -\snorm{\na\hbphi_{h,\tau}}^2)}_{0,6/5}^2\norm{\hbq_{h,\tau}}^2_{0,\infty} \\
&\qquad \qquad + \norm{A'(\bphi_{h,\tau})^2\hbq_{h,\tau}\snorm{\na\hbphi_{h,\tau}}^2 - \overline{A'(\phi))^2 q\snorm{\na\phi}^2}}_{0,6/5}^2  \ddta 
= (a) + (b).
\end{align*}
With the bounds for $A'$ and the discrete interpolation inequality \eqref{eq:disclapinterpgrad}, we obtain
\begin{align*}
(a) &\leq C\int_{I_n}\norm{\na\bphi_{h,\tau} -\na\hbphi_{h,\tau}}_{0,3}^2\norm{\na\bphi_{h,\tau} +\na\hbphi_{h,\tau}}_{0,2}^2\norm{\hbq_{h,\tau}}^2_{0,\infty}  \\
&\leq C\int_{I_n}\big(\norm{\na\bphi_{h,\tau} -\na\hbphi_{h,\tau}}_{0,2}\norm{\Delta_h(\bphi_{h,\tau} -\hbphi_{h,\tau})}_{0,2} + \E^\phi_\alpha(\phi_{h,\tau}|\hat \phi_{h,\tau})\big) \norm{\hbq_{h,\tau}}^2_{0,\infty} \\
& \leq C(\delta)\int_{I_n} \E^\phi_\alpha(\phi_{h,\tau}|\hat \phi_{h,\tau})\norm{\hbq_{h,\tau}}^4_{0,\infty} \ddta + \delta\int_{I_n}\norm{\Delta_h(\bphi_{h,\tau} -\hbphi_{h,\tau})}_{0,2}^2. 
\end{align*}
The second term in the first line is controlled by the uniform bounds for $\phi_{h,\tau}$, $\hat \phi_{h,\tau}$ in $L^\infty(H^1)$.
The parameter $\delta>0$ will be chosen later at our convenience. 
Using \eqref{eq:pg2}, $\eqref{eq:pgp2}$, and the definition of the discrete Laplacian, we further find
\begin{align}
  \int_{I_n}\norm{\Delta_h(\bphi_{h,\tau} &-\hbphi_{h,\tau})}_{0,2}^2 \leq \int_{I_n}\norm{\overline{f'(\phi_{h,\tau})-f'(\hat\phi_{h,\tau})}}_0^2 + \norm{\bmu_{h,\tau}-\hbmu_{h,\tau}}_0^2 + \norm{\bar r_{2,h,\tau}}_0^2 \ddta \notag \\
  &\leq \int_{I_n}C(f)\E^\phi_\alpha(\phi_{h,\tau}|\hat \phi_{h,\tau}) + \delta\D_{\bar\phi_{h,\tau}}(\bmu_{h,\tau}-\hbmu_{h,\tau}) + C\norm{\bar r_{2,h,\tau}}_1^2 \ddta. \label{eq:disclaplchemest}
\end{align}
In summary, this leads to the bound for $(a)$. 
For the second term, we use Lemma~\ref{lem_prodest} to see that
\begin{align*}
 (b) & \leq  \int_{I_n}\norm{[A'(\bphi_{h,\tau})^2-A'(\hbphi_{h,\tau})^2]\hbq_{h,\tau}\snorm{\na\hbphi_{h,\tau}}^2}_{0,6/5}^2 \\
 & \qquad \qquad + \norm{A'(\hbphi_{h,\tau})^2\hbq_{h,\tau}\snorm{\na\hbphi_{h,\tau}}^2 - \overline{A'(\phi))^2 q\snorm{\na\phi}^2}}_{0,6/5}^2  \\
 & \leq \int_{I_n}\norm{\hbq_{h,\tau}}^2_{0,\infty}\norm{\nabla\hbphi_{h,\tau}}_{0,3}^4\E^\phi_\alpha(\phi_{h,\tau}|\hat \phi_{h,\tau}) + C_1h^4 + C_2\tau^4.
\end{align*}
For the sixth term in the above error decomposition, we again use Lemma~\ref{lem_prodest} to get
\begin{align*}
 (v) &\leq \int_{I_n}   \norm{(A\cdot A')(\bphi_{h,\tau})\na\hbq_{h,\tau}\na\bphi_{h,\tau} - (A\cdot A')(\bphi_{h,\tau})\na\hbq_{h,\tau}\na\hbphi_{h,\tau}}_{0,6/5}^2  \\
 &\qquad \qquad +  \norm{(A\cdot A')(\bphi_{h,\tau})\na\hbq_{h,\tau}\na\hbphi_{h,\tau}  - \overline{(A\cdot A')(\phi)\na q\na\phi}}_{0,6/5}^2 \\
 & \leq C\norm{\nabla \hbq_{h,\tau}}^2_{0,3}\norm{\nabla(\bphi_{h,\tau}-\hbphi_{h,\tau})}^2_{0,2} + C\norm{\bphi_{h,\tau}-\hbphi_{h,\tau}}_{0,6}^2\norm{\na\hbq_{h,\tau}\na\hbphi_{h,\tau}}_{0,3/2}^2 \\
 &\qquad \qquad + \norm{(A\cdot A')(\hbphi_{h,\tau})\na\hbq_{h,\tau}\na\hbphi_{h,\tau}  - \overline{(A\cdot A')(\phi)\na q\na\phi}}_{0,6/5}^2 \\
 &\leq C(\norm{\nabla \hbq_{h,\tau}}^2_{0,3} + \norm{\na\hbq_{h,\tau}\na\hbphi_{h,\tau}}_{0,3/2}^2)\E^\phi_\alpha(\phi_{h,\tau}|\hat \phi_{h,\tau})  + C_3h^4 + C_4\tau^4.
\end{align*}
For the remaining term, we obtain in a similar manner
\begin{align*}
 (vii) &\leq \int_{I_n}   \norm{(A\cdot A')(\bphi_{h,\tau})\na\hbmu_{h,\tau}\na\bphi_{h,\tau} - (A\cdot A')(\bphi_{h,\tau})\na\hbmu_{h,\tau}\na\hbphi_{h,\tau}}_{0,6/5}^2  \\
 &\qquad \qquad +  \norm{(A\cdot A')(\bphi_{h,\tau})\na\hbmu_{h,\tau}\na\hbphi_{h,\tau}  - \overline{(A\cdot A')(\phi)\na\mu\na\phi}}_{0,6/5}^2 \\
 & \leq C\norm{\nabla \hbmu_{h,\tau}}^2_{0,3}\norm{\nabla(\bphi_{h,\tau}-\hbphi_{h,\tau})}^2_{0,2} + C\norm{\bphi_{h,\tau}-\hbphi_{h,\tau}}_{0,6}^2\norm{\na\hbmu_{h,\tau}\na\hbphi_{h,\tau}}_{0,3/2}^2 \\
 &\qquad \qquad + \norm{(A\cdot A')(\hbphi_{h,\tau})\na\hbmu_{h,\tau}\na\hbphi_{h,\tau}  - \overline{(A\cdot A')(\phi)\na\mu\na\phi}}_{0,6/5}^2 \\
 &\leq C(\norm{\nabla \hbmu_{h,\tau}}^2_{0,3} + \norm{\na\hbmu_{h,\tau}\na\hbphi_{h,\tau}}_{0,3/2}^2)\E^\phi_\alpha(\phi_{h,\tau}|\hat \phi_{h,\tau})  + C_3h^4 + C_4\tau^4.
\end{align*}
In total the third residual can therefore be estimated by
\begin{equation*}
 \int_{I_n}\norm{\bar r_{3,h,\tau}}_{-1,h}^2 \ddta \leq C(h^4 + \tau^4) + \int_{I_n} C\E_\alpha(\phi_{h,\tau},q_{h,\tau}|\hat \phi_{h,\tau},\hat q_{h,\tau}) \ddta.  
\end{equation*}

\section{Proof of Lemma~\ref{lem:unique_res}}

We assume the conditions of Lemma~\ref{lem:unique_res} to hold and again estimate the two residuals separately. 

\subsection*{First residual.}
With similar arguments as used in the previous section, we obtain 
\begin{align*}
\int_{I_n} \norm{\bar r_{1,h,\tau}}_{-1,h}^2 \ddta 
& \leq C \int_{I_n}\norm{b'}_{0,\infty}^2\norm{\hat\mu}_{1,3}^2\norm{\hat\phi_{h,\tau}-\hbphi_{h,\tau}}_{0,6}^2 \\
& \qquad \qquad + \norm{(c A)'}^2_{0,\infty}\norm{\hbq_{h,\tau}}_{1,3}\norm{\hat\phi_{h,\tau}-\hbphi_{h,\tau}}_{0,6}^2 \\
& \qquad \qquad + \norm{c A'}^2_{0,\infty}\norm{\hbq_{h,\tau}}_{0,\infty}^2\norm{\hat\phi_{h,\tau}-\hbphi_{h,\tau}}_1^2 \\
& \qquad \qquad + \norm{(cA')'}^2_{0,\infty}\norm{\hbq_{h,\tau}}_{0,\infty}^2\norm{\hat\phi_{h,\tau}-\hbphi_{h,\tau}}_{0,6}^2\norm{\hbphi_{h,\tau}}_{1,3}^2\\
&\leq 
C_1 \int_{I_n}\E^\phi_\alpha(\phi_{h,\tau}|\hat\phi_{h,\tau}) \ddta
\end{align*}
with constant $C_1$ depending on $\norm{\hbmu_{h,\tau}}_{L^\infty(W^{1,3})}$, $\norm{\hbq_{h,\tau}}_{L^\infty(W^{1,3})}$, and $\norm{\hbphi_{h,\tau}}_{L^\infty(W^{1,3})}$.
The third residual can be estimated similarly, which leads to
\begin{align*}
&\int_{I_n} \norm{\bar r_{3,h,\tau}}_{-1,h}^2 \ddta 
\leq C \int_{I_n} \norm{\kappa'}_{0,\infty}^2\norm{\bar q_{h,\tau}-\hbq_{h,\tau}}_0^2 
+ \norm{cA'}^2_{0,\infty}\norm{\hbmu_{h,\tau}}^2_{1,3}\norm{\bar\phi_{h,\tau}-\hbphi_{h,\tau}}^2_{0,6}\\
& \qquad + \norm{cA'}^2_{0,\infty}\norm{\hbmu_{h,\tau}}_{1,3}^2\norm{\na(\bar\phi_{h,\tau}-\hbphi_{h,\tau})}_0^2 
\\
&\qquad+ \norm{cA'}^2_{0,\infty}\norm{\hbmu_{h,\tau}}_{1,3}^2\norm{\bar\phi_{h,\tau}-\hbphi_{h,\tau}}_{0,6}^2\norm{\hbphi_{h,\tau}}^2_{1,3} \\
& \qquad + \norm{(A^2)'}_{0,\infty}^2\norm{\hbq_{h,\tau}}_{1,3}^2\norm{\bar\phi_{h,\tau}-\hbphi_{h,\tau}}_{0,6}^2 \\
&\qquad + \norm{(A')^2}_{0,\infty}\norm{\hbq_{h,\tau}}^2_{0,\infty}\norm{\na(\bar\phi_{h,\tau}+\hbphi_{h,\tau})}_0^2\norm{\na(\bar\phi_{h,\tau}-\hbphi_{h,\tau})}_{0,3}^2\\
& \qquad + \norm{((A')^2)'}_{0,\infty}\norm{\hbq_{h,\tau}}^2_{0,\infty}\norm{\bar\phi_{h,\tau}-\hbphi_{h,\tau}}_{0,6}^2\norm{\na\hbphi_{h,\tau}}_0^2\\
& \qquad + \norm{AA'}_{0,\infty}^2(\norm{\hbq_{h,\tau}}_{0,\infty}^2 + \norm{\hbq_{h,\tau}}_{1,3}^2)\norm{\na(\bar\phi_{h,\tau}-\hbphi_{h,\tau})}_0^2 \\
& \qquad + \norm{(AA')'}_{0,\infty}^2(\norm{\hbq_{h,\tau}}_{0,\infty}^2 + \norm{\hbq_{h,\tau}}_{1,3}^2)\norm{\bar\phi_{h,\tau}-\hbphi_{h,\tau}}_0^2\norm{\na\hbphi_{h,\tau}}_{0,3}^2 \\
& \leq  
C_2 \int_{I_n}\E_\alpha(z_{h,\tau}|\hat z_{h,\tau}) \ddta 
+ 
C_3 \int_{I_n} \norm{\na(\bar\phi_{h,\tau}-\hbphi_{h,\tau})}_{0,3}^2 \ddta.
\end{align*}
The constants $C_2$, $C_3$  depend on $\norm{\hbmu_{h,\tau}}_{L^\infty(W^{1,3})}$, $\norm{\hbq_{h,\tau}}_{L^\infty(W^{1,3})}$, $\norm{\hbq_{h,\tau}}_{L^\infty(L^\infty)}$, $\norm{\hbphi_{h,\tau}}_{L^\infty(W^{1,3})}$
and $\norm{\hbq_{h,\tau}}^2_{L^\infty(L^\infty)}$, $\norm{\na(\bar\phi_{h,\tau}+\hbphi_{h,\tau})}_{L^\infty(L^2)}^2$, respectively.
The last term in the above estimate can again be estimated by the discrete interpolation inequality \eqref{eq:disclapinterpgrad} and estimates for the discrete Laplacian, which finally can be absorbed in the dissipation terms; compare with \eqref{eq:disclaplchemest}.
In summary, we thus have obtained the required estimates for the two residuals of Lemma~\ref{lem:unique_res}.

\bibliographystyle{abbrv}
\bibliography{relenergy}

\end{document}